\journal{Computers \& Mathematics with Applications}
\theoremstyle{plain}
\newtheorem{theorem}{Theorem}[section]
\newtheorem{lemma}{Lemma}
\newtheorem{corollary}[theorem]{Corollary}
\theoremstyle{definition}
\theoremstyle{remark}
\newtheorem{remark}{Remark}
\numberwithin{equation}{section}
\numberwithin{theorem}{section}
\numberwithin{lemma}{section}
\numberwithin{remark}{section}
\newcommand{\Tbold}{\bm{T}}
\newcommand{\nbold}{\bm{n}}
\newcommand{\ubold}{\bm{u}}
\newcommand{\zbold}{\bm{z}}
\newcommand{\vbold}{\bm{v}}
\newcommand{\wbold}{\bm{w}}
\newcommand{\xbold}{\bm{x}}
\newcommand{\ipt}[2]{\left(#1,#2\right)_{\mathcal{T}_h}}
\newcommand{\ipbt}[2]{\left\langle#1,#2\right\rangle_{\partial \mathcal{T}_h}}
\newcommand{\ipbf}[2]{\left\langle#1,#2\right\rangle_{\mathcal{F}_h}}
\newcommand{\iipbf}[2]{\left\langle#1,#2\right\rangle_{\mathcal{F}_h^i}}
\newcommand{\llbracket}{\left[\!\left[}
\newcommand{\rrbracket}{\right] \! \right]}
\newcommand{\llcurve}{\left\{\!\left\{}
\newcommand{\rrcurve}{\right\} \! \right\}}
\newcommand{\vertiii}[1]{ \left\| #1 \right\|}
\newcommand{\vertiiisup}[1]{{\left\vert\kern-0.25ex\left\vert\kern-0.25ex\left\vert #1 
    \right\vert\kern-0.25ex\right\vert\kern-0.25ex\right\vert}}
\begin{document}

\begin{frontmatter}

\title{Versatile Mixed Methods for the Incompressible Navier-Stokes Equations}

\author{Xi Chen}
\author{David M. Williams \corref{mycorrespondingauthor}}
\address{Department of Mechanical Engineering, The Pennsylvania State University, University Park, Pennsylvania 16802}

\cortext[mycorrespondingauthor]{Corresponding author}
\ead{david.m.williams@psu.edu}

\begin{abstract}
In the spirit of the ``Principle of Equipresence" introduced by~Truesdell \& Toupin, \emph{The Classical Field Theories} (1960), we use the full version of the viscous stress tensor $\nu \left( \nabla \ubold + \nabla \ubold^T - \frac{2}{3} \left(\nabla \cdot \ubold \right) \mathbb{I} \right)$ which was originally derived for compressible flows, instead of the classical incompressible stress tensor $\nu \nabla \ubold$. (Note that, here $\nu$ is the dynamic viscosity coefficient, and $\ubold$ is the velocity field.) In our approach, the divergence-free constraint for the viscous stress term is not enforced ahead of discretization. Instead, our formulation allows the scheme itself to ``choose" a consistent way to interpret the divergence-free constraint: i.e., the divergence-free constraint is interpreted (or enforced) in a consistent fashion in both the mass conservation equation \emph{and} the stress tensor term (in the momentum equation). Furthermore, our approach preserves the original symmetrical properties of the stress tensor, e.g.~its rotational invariance, and it remains physically correct in the context of compressible flows. As a result, our approach facilitates versatility and code reuse. In this paper, we introduce our approach and establish some important mathematical properties for the resulting class of finite element schemes. More precisely, for general mixed methods, which are not necessarily pointwise divergence-free, we establish the existence of a new norm induced by the full, viscous bilinear form. Thereafter, we prove the coercivity of the viscous bilinear form and the semi-coercivity of a convective trilinear form. In addition, we demonstrate L2-stability of the discrete velocity fields for the general class of methods and (by deduction) the H(div)-conforming methods. Finally, we run some numerical experiments to illustrate the behavior of the versatile mixed methods, and we make careful comparisons with a conventional H(div)-conforming scheme. 
\end{abstract}

\begin{keyword}
Galerkin \sep  divergence-free \sep symmetric tensor \sep incompressible Navier-Stokes \sep mixed finite element methods \sep versatile
\MSC[2010] 76M10 \sep 65M12 \sep 65M60 \sep 76D05
\end{keyword}

\end{frontmatter}

\section{Introduction} 
In this paper, we discuss the discretization of the incompressible Navier-Stokes equations using mixed finite element methods. It is important to note that mixed methods are not all the same, and that there are some important differences that distinguish them from each other. Standard mixed methods lack pressure robustness in the sense that the error estimate of velocity will be affected by the pressure approximation scaled by the inverse of the viscosity coefficient~\cite{john2017divergence}. This scaling will lead to poor convergence behavior, especially in convection-dominated flows. Another closely-related issue with many mixed methods is the weak enforcement of the divergence-free constraint. As a result, one loses the conservation of mass that is associated with the continuous system. There are many ways to address this issue for non-divergence-free methods. For example, for the standard Taylor-Hood velocity-pressure pair (which is H1-conforming in both velocity and pressure fields), one of the popular approaches is to add a grad-div stabilization term originally proposed by Franca and Hughes~\cite{franca1988two}. This term effectively penalizes the lack of mass conservation. As a result, one can improve solution accuracy by reducing the effect of the pressure error on the velocity error~\cite{olshanskii2004grad,olshanskii2009grad}. One thing worth noticing is that, although the grad-div term penalizes the failure to conserve mass, the resulting method may still be far from divergence free~\cite{john2017divergence}. For practical problems which require strict mass conservation, this remains an issue. (For some more recent work related to grad-div stabilization, see for example \cite{gelhard2005stabilized,braack2007stabilized,case2011connection,jenkins2014parameter,john2016finite}). One way to completely remedy the poor mass conservation is to use Scott-Vogelius elements~\cite{burman2008stabilized}, which maintain the H1-conforming nature of the velocity field, while allowing the pressure field to become discontinuous. For these methods, one can obtain pressure robustness, as now the weakly divergence-free velocity space is pointwise divergence-free~\cite{burman2008stabilized}. Note: in this context and throughout the remainder of the paper, whenever we say pointwise divergence-free, we mean pointwise divergence-free within each element. 
Finally, a typical way to remedy mass conservation issues for a discontinuous Galerkin method is to perform a post-processing procedure on the velocity field, or to penalize the jumps in the normal components of the velocity field. The post-processing procedure involves projecting the discrete velocity field into an exactly incompressible space~\cite{cockburn2005locally}, whereas the penalization procedure is enforced through the introduction of a dissipative numerical flux function~\cite{guzman2016h} which controls the jumps in the normal components. Similar procedures are discussed in~\cite{linke2014role,linke2016pressure}.

The key point is that, for many H(div)-conforming methods, we can completely omit the penalization and post-processing procedures described above. The principal advantage of these H(div)-conforming methods is that, by carefully choosing the velocity and pressure pair~\cite{boffi2013mixed}, one can make the velocity space to be pointwise divergence-free, while simultaneously enforcing an inf-sup condition on the velocity and pressure spaces. 
These two properties can be used to (naturally) ensure mass conservation and pressure robustness~\cite{schroeder2018divergence}.
There are many H(div)-conforming methods of this type, including Raviart-Thomas and Brezzi-Douglas-Marini based methods~\cite{guzman2016h,schroeder2018towards}, and related methods~\cite{zhang2005new,falk2013stokes,guzman2014conforming,lehrenfeld2016high}. This list of methods is far from exhaustive, but is merely meant to summarize some of the recent work in this area. 

In addition to pressure robustness and conservation of mass, finite element schemes should possess other desirable properties, such as conservation of linear momentum, angular momentum, and kinetic energy. Note that the conservation of kinetic energy only holds in the limit of vanishing viscosity, (see the discussions in \cite{majda2002vorticity,palha2017mass,coppola2019discrete} for details). It turns out that all of the important conservation properties (mentioned above) are exactly satisfied at the discrete level for pointwise divergence-free, H1-conforming methods. In addition, conservation of kinetic energy is guaranteed for pointwise divergence-free, H(div)-conforming methods that are equipped with an appropriate numerical flux, (e.g.~the central flux of~\cite{guzman2016h}). However, these properties are not guaranteed to hold for general mixed methods. For these methods, discrete conservation of linear momentum, angular momentum, and kinetic energy are closely linked to the discretization of the nonlinear convective term $\left(\ubold \cdot \nabla \right) \ubold$ that appears in the momentum equation (as discussed in \cite{schroeder2017pressure}). The conventional discretizations of this term, such as the direct discretization of $\left(\ubold \cdot \nabla \right) \ubold$, or the discretization of the skew-symmetric formulation, $\left(\ubold \cdot \nabla \right) \ubold + \left(1/2\right) \left(\nabla \cdot \ubold \right) \ubold$, fail to enforce discrete conservation. Instead, it is necessary to introduce the ``energy momentum and angular momentum conserving" (EMAC) formulation~\cite{charnyi2017conservation,charnyi2018efficient,charnyi2018emac,lehmkuhl2019low} which takes the following form: $(\nabla\ubold+\nabla\ubold^{T})\ubold+(\nabla\cdot\ubold)\ubold$. This form is interesting because it rewrites the convective term as a function of the symmetric gradient. We choose not to focus on EMAC methods in the remainder of this work, but we mention them here due to their recent popularity in finite element discussions, and their superficial similarity to our proposed approach.

In light of our previous discussion, which focused on the convective term, we now turn our attention to the classical viscous term, $\nabla \cdot \left( \nu \nabla \ubold \right)$. 
Naturally, this term is obtained by applying the divergence-free constraint to the compressible, symmetric stress tensor $\nu \left( \nabla \ubold + \nabla \ubold^T - \frac{2}{3} \left(\nabla \cdot \ubold \right) \mathbb{I} \right)$. Therefore, finite element methods that use the non-symmetric form of the stress tensor, i.e.~$\nu \nabla \ubold$, effectively assume that the divergence-free constraint should be enforced prior to discretization. Here, we propose that the divergence-free constraint should be enforced after discretization, and that the original symmetric formulation of the stress tensor should be retained. This facilitates philosophical consistency between the treatment of the divergence-free constraint in the mass equation \emph{and} the momentum equation. Furthermore, we prefer the full compressible tensor due to its symmetry, which enables rotational invariance, and its versatility, which facilitates the application of our methods to compressible flows.

Now, the idea of reformulating the stress tensor term by using a symmetric stress tensor instead of a non-symmetric stress tensor is not new. However, most finite element methods which utilize a symmetric stress tensor have been designed for solving elasticity problems, rather than fluids problems. 
Since the divergence of the Cauchy stress tensor contributes to the governing equations of elasticity,
the appropriate space for the stress tensor is $\bm{H}(\text{div}; \Omega; \mathsf{S})$, where $\mathsf{S}$ denotes a symmetric tensor. If one is able to work inside this space at the discrete level, then one obtains H(div)-conforming approximations and symmetry~\cite{arnold2002mixed,arnold2008finite,hu2014family,hu2014simple}. However, the resulting methods have some issues with computational efficiency~\cite{cockburn2017devising}, and therefore, some researchers relax the H(div)-conforming requirement while maintaining symmetry~\cite{arnold2003nonconforming,gopalakrishnan2011symmetric,cockburn2017devising}. 

The construction of mixed methods that preserve the symmetry of the stress tensor for fluids problems has only recently garnered significant attention. In particular, for Stokes flow, \cite{cockburn2017note} and \cite{giacomini2018superconvergent} constructed H(div)-nonconforming and symmetric methods. These methods are more expensive than standard mixed methods, as they introduce additional unknowns for the stress tensor components, (although this yields better accuracy in some cases). In contrast, one may consider less expensive mixed methods that omit the extra unknowns, and directly introduce the symmetric tensor into the primal formulation. Examples of these methods include the H1-conforming method of~\cite{tezduyar1991stabilized}, and the H(div)-conforming method of~\cite{hong2016robust,hong2016uniformly}. In the remainder of this paper, we propose mixed finite element methods that are natural extensions of these `primal methods'. Our work differs from the previous work in this area, as all previous efforts have utilized the symmetric tensor formulation $\nu \left( \nabla \ubold + \nabla \ubold^T \right)$, whereas we use the full symmetric tensor formulation $\nu \left( \nabla \ubold + \nabla \ubold^T - \frac{2}{3} \left(\nabla \cdot \ubold \right) \mathbb{I} \right)$. Furthermore, almost all prior work has focused on the Stokes equations, whereas we focus on the complete, incompressible Navier-Stokes equations. 

Our paper is organized as follows. In section \ref{prelim_section} we introduce some relevant notation and give the motivation for our formulation. In sections \ref{gen_method_sec} and \ref{div_method_sec} we present a general class of mixed finite element methods, and then introduce a specific class of pointwise divergence-free, H(div)-conforming methods. In section \ref{connections} we explore the key differences between our general approach and several alternative methods. In section \ref{Analysis_in_general} we prove some important properties of the bilinear and trilinear forms for the general case.
In section \ref{stability} we prove L2-stability of the velocity field for the general case. Next, in section \ref{numerical_simulation} we carry out some canonical simulations to show the behavior of our schemes. Finally, in section \ref{conclusion} we summarize our work, and in the Appendices we provide a detailed derivation of the schemes.

\label{sec;introduction}


\section{Preliminaries} \label{prelim_section}

Our objective is to solve the incompressible Navier-Stokes (NS) equations in a bounded, $\left(d+1\right)$-dimensional domain $\left(0, t_n \right) \times \Omega$, with boundary $\left(0, t_n \right) \times \partial \Omega$, where $t_n > 0$ is an arbitrary final time and $d=2$ or 3. Towards this end, we first define the pressure field $p\left(t,\xbold\right)$ and the velocity field $\ubold \left(t, \xbold \right)$. Throughout this discussion, we assume that the density is constant in both space and time, i.e., $\rho = \rho_0$, $\rho  \neq \rho \left(t, \xbold \right)$. It is common practice to specify $\rho_0$ as a unit value to simplify the presentation, and then to denote $p/\rho$ as simply $p$. We will not do this here, as this practice is not (strictly speaking) dimensionally consistent and it obfuscates the connection between the incompressible and compressible NS equations. Instead, we will define a new pressure variable $\widetilde{p} = p/\rho$. In what follows, $\widetilde{p}$ and $\ubold$ will serve as the primary parameters of interest. 

Now, we can formally introduce the incompressible NS equations, which take the form
\begin{align}
\partial_t \, \ubold + \nabla \cdot \left( \ubold \otimes \ubold + \widetilde{p} \, \mathbb{I} \right) - \nabla \cdot \widetilde{\bm{\tau}}  = \widetilde{\bm{f}},  \qquad & \text{in} \quad \left(0, t_n\right) \times \Omega, \label{moment_cons}\\[1.5ex]
\nabla \cdot \ubold = 0, \qquad & \text{in} \quad \left(0, t_n\right) \times \Omega, \label{mass_cons} \\[1.5ex]
\ubold=0,\qquad& \text{on} \quad \left(0, t_n\right) \times \partial \Omega,\\[1.5ex]
\ubold(0,\xbold)=\ubold_0(\xbold),\qquad& \text{in} \quad \Omega, 
\end{align}
where $\partial_t \left( \cdot \right)$ is the temporal derivative operator, $\nabla \left( \cdot \right)$ is the spatial gradient operator, $\widetilde{\bm{f}}$ is the density-weighted forcing function, and $\widetilde{\bm{\tau}}$ is the density-weighted viscous stress tensor
\begin{align}
\widetilde{\bm{\tau}} = \nu \left( \nabla \ubold + \nabla \ubold^T - \frac{2}{3} \left(\nabla \cdot \ubold \right) \mathbb{I} \right). \label{stress_tensor}
\end{align}
Here, $\nu = \mu/\rho_0$ is the kinematic viscosity coefficient, and $\mu$ is the dynamic viscosity coefficient. 

Our presentation of the incompressible NS equations differs from the classical presentation as we use a `compressible stress tensor' in Eq.~\eqref{stress_tensor} that contains the velocity gradient, the velocity gradient transpose, and the velocity divergence; whereas conversely, the classical `incompressible stress tensor' only contains the velocity gradient, as follows
\begin{align}
\widetilde{\bm{\tau}} = \nu \nabla \ubold. \label{stress_tensor_simp}
\end{align}
Of course, if our objective is to calculate $\nabla \cdot \widetilde{\bm{\tau}}$ in Eq.~\eqref{moment_cons}, the stress tensor formulations in Eqs.~\eqref{stress_tensor} and~\eqref{stress_tensor_simp} are equivalent, as the following identities hold at the continuous level
\begin{align}
    \nabla \cdot \left(\nu \nabla \ubold^T \right) = \nu \nabla \cdot \left(\nabla \ubold^T \right) = \nu \nabla \left(\nabla \cdot \ubold \right) & = 0, \label{cont_id_one} \\[1.5ex]
    -\nabla \cdot \left( \frac{2}{3} \nu \left( \nabla \cdot \ubold \right) \mathbb{I} \right) = - \frac{2}{3} \nu \nabla \cdot \left(  \left( \nabla \cdot \ubold \right) \mathbb{I} \right) &= 0,
\end{align}
because $\nabla \cdot \ubold$ vanishes pointwise in accordance with Eq.~\eqref{mass_cons}. However, the stress tensor formulations are different at the discrete level, as $\nabla \cdot \ubold$ does not necessarily vanish pointwise when a generic finite element method is applied to Eq.~\eqref{mass_cons}. Furthermore, even if we successfully choose a finite element method for which $\nabla \cdot \ubold$ vanishes pointwise, the velocity gradient transpose ($\nabla \ubold^T$) term in Eq.~\eqref{cont_id_one} does not necessarily vanish in the weak formulation of the equations. Because of these considerations, the formulation in Eq.~\eqref{stress_tensor} is inherently distinct from the formulation in Eq.~\eqref{stress_tensor_simp}. 

Based on the discussion above, it is not immediately clear which stress tensor formulation, Eq.~\eqref{stress_tensor} or \eqref{stress_tensor_simp}, should be utilized in practice. In what follows, we claim that the formulation in Eq.~\eqref{stress_tensor} is preferable, due to its superior versatility and flexibility. In particular:

\begin{enumerate}
    \item The formulation in Eq.~\eqref{stress_tensor} easily applies to both incompressible and \emph{compressible} fluids, whereas the formulation in Eq.~\eqref{stress_tensor_simp} only applies to incompressible fluids. We anticipate that the formulation in Eq.~\eqref{stress_tensor} will facilitate code-reuse, and allow for a more unified treatment of compressible and incompressible flows.

    \item The formulation in Eq.~\eqref{stress_tensor} is rotationally invariant, whereas the formulation in Eq.~\eqref{stress_tensor_simp} is rotationally \emph{variant}. This follows from the fact that Eq.~\eqref{stress_tensor_simp} contains the full velocity gradient tensor $\nabla \ubold$, which (by construction) is composed from symmetric \emph{and} antisymmetric parts,
    \begin{align*}
        \nabla \ubold &= \underbrace{\frac{1}{2} \left( \nabla \ubold + \nabla \ubold^T \right)}_{\text{symmetric}} + \underbrace{\frac{1}{2} \left( \nabla \ubold - \nabla \ubold^T \right)}_{\text{antisymmetric}} \\[1.5ex]
        &= \bm{\epsilon} \left(\ubold \right) + \bm{\omega} \left(\ubold \right).
    \end{align*}
    Conversely, Eq.~\eqref{stress_tensor} only contains the symmetric part. In order to highlight the significance of this distinction, we review the following basic result from the field of continuum mechanics. Let us consider two reference frames that are related by a proper orthogonal tensor $\bm{Q} \in \text{SO}(3)$, (i.e.~a rotation tensor), such that $\bm{x}^{\ast} - \bm{x}_{0}^{\ast} = \bm{Q} \left( \bm{x} - \bm{x}_0 \right)$. Then, we have rotational invariance for the symmetric part
    \begin{align*}
        \bm{\epsilon} \left(\ubold \right)^{\ast} = \bm{Q} \, \bm{\epsilon} \left(\ubold \right) \bm{Q}^T,
    \end{align*}
    but not for the antisymmetric part
    \begin{align*}
         \bm{\omega} \left(\ubold \right)^{\ast} = \bm{Q} \, \bm{\omega} \left(\ubold \right) \bm{Q}^T + \dot{\bm{Q}} \bm{Q}^T.
    \end{align*}

\end{enumerate}
%
%
In light of the above discussion, one may view Eqs.~\eqref{moment_cons} and \eqref{mass_cons} as a system of equations for unknowns $\widetilde{p}$ and $\ubold$, where Eq.~\eqref{stress_tensor} is a general constitutive relation.

Next, we will prepare to solve Eqs.~\eqref{moment_cons} and \eqref{mass_cons} at the discrete level by introducing the necessary mathematical machinery which consists of the following: a suitable subdivision of the domain $\Omega$ and notations for inner products, jumps, and function spaces. Towards this end, we introduce a mesh $\mathcal{T}_h$ that discretizes the domain $\Omega$ and contains elements $K$. We assume that the boundary of the domain, $\partial \Omega$, is composed from straight edges (or faces), and that the mesh conforms to the domain. In addition, each element in the mesh has a boundary $\partial K$ that is composed from a collection of faces denoted by $\mathcal{F}_{K}$. The total collection of faces in the mesh is denoted by $\mathcal{F}_h$. Therefore, if we denote an individual face by $F$, then the collection of faces for an element are defined such that: $\mathcal{F}_{K} = \left\{ F \in \mathcal{F}_h : F \subset \partial K \right\}$. Each interior face $F$ is shared by two elements, and we assume that the elements do not overlap, and that the mesh does not contain any hanging nodes. The collection of all interior faces is denoted by $\mathcal{F}_h^i = \{F \in \mathcal{F}_h : F \cap \partial\Omega = \emptyset \}$, and the collection of boundary faces by $\mathcal{F}_h^{\partial} = \{F \in \mathcal{F}_h : F \cap \partial\Omega \neq \emptyset \}$. Finally, an outward-pointing normal vector for an arbitrary element is denoted by $\nbold$, and a normal vector associated with a face $F$ is denoted by $\nbold_F$. The vector $\nbold_F$ is assumed to point from the positive (+) side of a face towards the negative (-) side.

We can now define inner products of vector-valued functions $\vbold$ and $\wbold$ over the elements and faces of the mesh as follows
\begin{align*}
\ipt{\vbold}{\wbold} & = \sum_{K \in \mathcal{T}_h} \int_{K} \vbold \cdot \wbold \, dV, \qquad \ipbt{\vbold}{\wbold}  = \sum_{K \in \mathcal{T}_h} \int_{\partial K} \vbold \cdot \wbold \, dA, \\[1.5ex]
\ipbf{\vbold}{\wbold} & = \sum_{F \in \mathcal{F}_h} \int_{F} \vbold \cdot \wbold \, dA.
\end{align*}
In addition, for a scalar-valued function $\phi$, a vector-valued function $\vbold$, and a second-order-tensor-valued function $\bm{T}$, we have the following integration by parts formulas
\begin{align*}
\int_{\partial K} \phi \left( \vbold \cdot \nbold \right) dA & = \int_{K} \left(\phi \left(\nabla\cdot \vbold \right) + \vbold \cdot \nabla \phi  \right) dV, \\[1.5ex]
\int_{\partial K} \vbold \cdot \Tbold \nbold \, dA &= \int_{K} \left(\vbold \cdot \left( \nabla \cdot \Tbold \right) + \Tbold : \nabla \vbold \right) dV,
\end{align*}
or equivalently
\begin{align*}
\left\langle \phi \vbold, \nbold \right\rangle_{\partial K} &= \left( \phi, \nabla \cdot \vbold \right)_{K} + \left( \vbold, \nabla \phi \right)_{K}, \\[1.5ex]
\left\langle \vbold, \bm{T} \nbold \right\rangle_{\partial K} &= \left(\vbold, \nabla \cdot \bm{T} \right)_{K} + \left(\bm{T}, \nabla \vbold \right)_{K}.
\end{align*}
For each of the integration formulas above, we have assumed that the integrands are well-defined such that the integrations make sense.

Next, we will introduce jump $\llbracket \cdot \rrbracket$ and average $\llcurve \cdot \rrcurve$ operators for an interface $F$ as follows
\begin{align*}
\llbracket \phi \rrbracket &= \phi_{+} - \phi_{-}, \qquad \llbracket \phi \nbold \rrbracket = \phi_{+} \nbold_{+} + \phi_{-} \nbold_{-}, \qquad \llcurve \phi \rrcurve = \frac{1}{2} \left( \phi_{+} + \phi_{-} \right), \\[1.5ex]
\llbracket \vbold \rrbracket &= \vbold_{+} - \vbold_{-}, \qquad \llbracket \vbold \otimes \nbold \rrbracket = \vbold_{+} \otimes \nbold_{+} + \vbold_{-} \otimes \nbold_{-}, \qquad  \llcurve \vbold \rrcurve = \frac{1}{2} \left( \vbold_{+} + \vbold_{-} \right),
\end{align*}
when $F\in\mathcal{F}^{i}_{h}$, and we define
\begin{align*}
\llbracket \phi \rrbracket &= \phi, \qquad \llbracket \phi \nbold \rrbracket = \phi \nbold, \qquad \llcurve \phi \rrcurve = \phi, \\[1.5ex]
\llbracket \vbold \rrbracket &= \vbold, \qquad \llbracket \vbold \otimes \nbold \rrbracket = \vbold \otimes \nbold, \qquad  \llcurve \vbold \rrcurve = \vbold,
\end{align*}
when $F\in\mathcal{F}^{\partial}_{h}$. We conclude this section by introducing some standard function spaces that are necessary for constructing mixed finite element methods. We begin by defining the following Hilbert spaces
\begin{align*}
   &\bm{H}_{0}(\text{div};\Omega)= \left\{ \wbold  :  \wbold\in \bm{L}^{2}(\Omega),~\nabla\cdot\wbold \in L^{2}(\Omega),~\wbold \cdot\nbold|_{\partial\Omega}=0\right\},\\[1.5ex] 
    &\bm{H}_{0}^{1}(\Omega)= \left\{ \wbold  :  \wbold\in \bm{H}^{1}(\Omega),~\wbold|_{\partial\Omega}=0\right\},\\[1.5ex] 
    &\bm{H}^{m}(\mathcal{T}_h) = \left\{\wbold  \in \bm{L}^{2}(\Omega), \wbold|_{K} \in\bm{H}^{m}(K),~\forall K \in \mathcal{T}_h \right\},
\end{align*}
where $\bm{H}^{1} \left(\Omega\right) = \left(H^{1} \left(\Omega\right) \right)^d$. Next, we can define the finite element spaces
\begin{align*}
&Q_h^{DC} = \left\{ q_h  : q_h \in L^2_{\ast} \left( \Omega \right), q_{h} |_{K} \in P_{k} \left( K \right), \forall K \in \mathcal{T}_h \right\}, \\[1.5ex]
&\bm{W}_h^{RT}  = \left\{ \wbold_h : \wbold_h \in \bm{H}_{0}\left(\text{div}; \Omega \right), \wbold_h |_{K} \in  \bm{RT}_k \left( K \right), \forall K \in \mathcal{T}_h \right\},
\end{align*}
where $L^2_{\ast} \left(\Omega\right)$ is the space of $L^2$ functions with zero mean, $P_{k} \left(K \right)$ is the space of polynomials of degree $\leq k$, and $ \bm{RT}_k \left(K \right)$ is the Raviart-Thomas space of degree~$k$
\begin{align*}
\bm{RT}_k \left(K \right) = \left(P_k \left(K \right) \right)^d \oplus P_k \left(K \right) \xbold.
\end{align*}
Note that the Raviart-Thomas space does not completely span $\left(P_{k+1} \left(K \right)\right)^d$. This limits the accuracy of the resulting methods, and as a result, one may instead consider the Brezzi-Douglas-Marini (BDM) space which maintains H(div)-conformity, while still spanning $\left(P_{k+1} \left(K \right)\right)^d$. We denote the BDM space by $\bm{W}_h^{BDM}$, and we refer the reader to~\cite{boffi2013mixed} for an explicit definition.

Finally, we define the following continuous function spaces that are necessary for constructing Taylor-Hood elements
\begin{align*}
    &Q_h^{TH} = \left\{ q_h  : q_h \in C^{0} \left( \Omega \right), q_{h} |_{K} \in P_{k} \left( K \right), \forall K \in \mathcal{T}_h \right\} \cap L_{\ast}^2 \left(\Omega \right), \\[1.5ex]
    &\bm{W}_h^{TH} = \left\{ \wbold_h : \wbold_h \in \bm{C}^{0} \left(\Omega\right), \wbold_h |_{K} \in  \left(P_{k+1} \left( K \right) \right)^d, \forall K \in \mathcal{T}_h \right\} \cap \bm{H}_{0}^{1}(\Omega),
\end{align*}
where $\bm{C}^{0} \left(\Omega\right) = \left(C^{0} \left(\Omega\right) \right)^d$. 





\section{General Mixed Methods} \label{gen_method_sec}

In this section, we introduce a new class of mixed finite element methods for discretizing Eqs.~\eqref{moment_cons} and \eqref{mass_cons}. This class of methods is very general, and it includes methods that are not necessarily pointwise divergence-free. We will utilize this class of methods to construct a particular class of pointwise divergence-free methods in the next section. For now, we consider the general framework of methods, which can be formally stated as follows: 1) identify function spaces $Q_h \subset L^2_{\ast} \left(\Omega \right)$ and $\bm{W}_h \subset \bm{H}_{0}(\text{div};\Omega)$; 2) choose test functions $\left(q_h, \wbold_h\right)$ that span $Q_h \times \bm{W}_h$; and 3) find unknowns $\left(\widetilde{p}_h, \ubold_h \right)$ in $Q_h  \times \bm{W}_h$ that satisfy 
%
%
\begin{align}
& \ipt{ \nabla \cdot \ubold_h}{q_h} = 0, \label{mass_cons_disc}
\end{align}
\begin{align}
\nonumber & \ipt{\partial_t \ubold_h }{\wbold_h} - \ipt{ \ubold_h \otimes \ubold_h}{\nabla_h \wbold_h} - \ipt{\widetilde{p}_h}{\nabla \cdot \wbold_h} + \ipbt{\hat{\bm{\sigma}}_{\text{inv}} \, \nbold}{\wbold_h} \\[1.5ex] 
\nonumber & + \nu_h \bigg[ \ipt{\nabla_h \ubold_h + \nabla_h \ubold_{h}^{T} - \frac{2}{3} \left(\nabla \cdot \ubold_h \right) \mathbb{I}}{\nabla_h \wbold_h} - \ipbt{\hat{\bm{\sigma}}_{\text{vis}} \, \nbold}{\wbold_h} \\[1.5ex]
\nonumber & + \ipbt{\hat{\bm{\varphi}}_{\text{vis}} - \ubold_h}{\left( \nabla_h \wbold_h + \nabla_h \wbold_{h}^{T} - \frac{2}{3} \left(\nabla \cdot \wbold_h \right) \mathbb{I} \right) \nbold} \bigg] \\[1.5ex]
&-\frac{1}{2} \ipt{\left(\nabla \cdot \bm{u}_h \right) \ubold_h}{\wbold_h} = \ipt{\widetilde{\bm{f}}}{\wbold_h}, \label{moment_cons_disc}
\end{align}
where the quantities with hats (e.g.~$\hat{\bm{\sigma}}_{\text{inv}}$) denote numerical fluxes.
We recommend that the numerical fluxes are specified in the following fashion
\begin{align*}
\hat{\bm{\sigma}}_{\text{inv}} &= \llcurve \ubold_h \rrcurve \otimes \llcurve \ubold_h \rrcurve + \llcurve \widetilde{p}_h \rrcurve \mathbb{I} + \zeta \left| \ubold_h \cdot \nbold_F \right| \llbracket \ubold_h \otimes \nbold \rrbracket, \\[1.5ex]
\hat{\bm{\sigma}}_{\text{vis}} & = \llcurve \nabla_h \ubold_h + \nabla_h \ubold_{h}^{T} - \frac{2}{3} \left(\nabla \cdot \ubold_h \right) \mathbb{I} \rrcurve -\frac{\eta}{h_F} \llbracket \ubold_h \otimes \nbold \rrbracket, \\[1.5ex]
\hat{\bm{\varphi}}_{\text{vis}} &= \llcurve \ubold_h \rrcurve,
\end{align*}
where $\zeta$ and $\eta$ are user-specified parameters that control the amount of dissipation that is added to the scheme. 

Now, it is standard practice to rewrite Eqs.~\eqref{mass_cons_disc} and \eqref{moment_cons_disc} as the following compact system
\begin{align}
& b_h \left(\ubold_h, q_h \right) = 0, \label{incomp_form_one} \\[1.5ex]
\nonumber & \ipt{\partial_t \, \ubold_h}{\wbold_h} + c_h \left(\ubold_h; \ubold_h, \wbold_h \right) +\nu_h a_h \left(\ubold_h, \wbold_h \right)  - b_h \left( \wbold_h, \widetilde{p}_h \right) = \ipt{\widetilde{\bm{f}}}{\wbold_h},  
\end{align}
where $a_h$ is a viscous bilinear form, $b_h$ is a convective bilinear form, and $c_h$ is a convective trilinear form. Each of the bilinear and trilinear forms in Eq.~\eqref{incomp_form_one} can be explicitly defined in terms of scalar function $q_h \in L^2_{\ast} \left(\Omega \right)$, and vector functions $\vbold_h$, $\wbold_h$, and $\bm{\beta}_h \in \bm{H}_{0}(\text{div};\Omega)$ as follows
\begin{align}
b_h \left(\vbold_h, q_h \right) &= \ipt{\nabla \cdot \vbold_h}{q_h},
\label{bilinear_press_div} \\[1.5ex]
c_h \left(\bm{\beta}_h ; \vbold_h, \wbold_h \right) & = - \ipt{\vbold_h \otimes \bm{\beta}_h}{\nabla_h \wbold_h} -\frac{1}{2} \ipt{\left(\nabla \cdot \bm{\beta}_h \right) \vbold_h}{\wbold_h} \\[1.5ex]
\nonumber & + \ipbt{\left( \llcurve \vbold_h \rrcurve \otimes \llcurve \bm{\beta}_h \rrcurve + \zeta \left|  \bm{\beta}_h \cdot \nbold_F \right| \llbracket \vbold_h \otimes \nbold \rrbracket \right) \nbold}{\wbold_h},
\end{align}
\begin{align}
a_h \left(\vbold_h, \wbold_h \right) &=  \ipt{ \nabla_h \vbold_h + \nabla_h \vbold_{h}^{T} - \frac{2}{3} \left(\nabla \cdot \vbold_h \right) \mathbb{I}}{\nabla_h \wbold_h} \label{viscous_1}\\[1.5ex] 
\nonumber & -  \ipbt{\left( \llcurve \nabla_h \vbold_h + \nabla_h \vbold_{h}^{T} - \frac{2}{3} \left(\nabla \cdot \vbold_h \right) \mathbb{I} \rrcurve -\frac{\eta}{h_F} \llbracket \vbold_h \otimes \nbold \rrbracket \right) \nbold}{\wbold_h} \\[1.5ex]
\nonumber & + \ipbt{\llcurve \vbold_h \rrcurve - \vbold_h}{\left( \nabla_h \wbold_h + \nabla_h \wbold_{h}^{T} - \frac{2}{3} \left(\nabla \cdot \wbold_h \right) \mathbb{I} \right) \nbold}.
\end{align} 
In addition, we can perform integration by parts on the trilinear form $c_h$, and then rewrite $c_h$ and the bilinear form $a_h$ in terms of summations over faces in the mesh as follows
\begin{align}
c_h \left(\bm{\beta}_h; \vbold_h, \wbold_h \right) &= \ipt{\bm{\beta}_h \cdot \nabla_h \vbold_h}{\wbold_h} + \frac{1}{2} \ipt{\left(\nabla \cdot \bm{\beta}_h \right) \vbold_h}{\wbold_h} \label{trilinear}\\[1.5ex]
& \nonumber - \iipbf{ \left( \bm{\beta}_h \cdot \nbold_F \right) \llbracket \vbold_h \rrbracket}{\llcurve \wbold_h \rrcurve} + \zeta \iipbf{\left| \bm{\beta}_h \cdot \nbold_F \right|\llbracket \vbold_h \rrbracket}{\llbracket \wbold_h \rrbracket}, 
\end{align}

\begin{align}
a_h \left(\vbold_h, \wbold_h \right) &=  \ipt{ \nabla_h \vbold_h + \nabla_h \vbold_{h}^{T} - \frac{2}{3} \left(\nabla \cdot \vbold_h \right) \mathbb{I}}{\nabla_h \wbold_h} \label{diff_bilinear} \\[1.5ex]
\nonumber & -\ipbf{\llbracket \vbold_h \rrbracket}{\llcurve  \nabla_h \wbold_h + \nabla_h \wbold_{h}^{T} - \frac{2}{3} \left(\nabla \cdot \wbold_h \right) \mathbb{I} \rrcurve \nbold_F}  \\[1.5ex]
\nonumber & -\ipbf{\llbracket \wbold_h \rrbracket}{\llcurve  \nabla_h \vbold_h + \nabla_h \vbold_{h}^{T} - \frac{2}{3} \left(\nabla \cdot \vbold_h \right) \mathbb{I} \rrcurve \nbold_F} + \ipbf{\frac{\eta}{h_F} \llbracket \vbold_h \rrbracket}{\llbracket \wbold_h \rrbracket}. 
\end{align}
Here, we assumed that a consistent boundary term was added to Eq.~\eqref{viscous_1} in order to obtain Eq.~\eqref{diff_bilinear}. Generally speaking, the consistency of the formulation holds for exact solutions $\widetilde{p}(t)\in L^{2}_{\ast}(\Omega)$ and $\bm{u}(t)\in\bm{H}^{\frac{3}{2}+\epsilon}(\mathcal{T}_h)\cap\bm{H}^{1}_{0}(\Omega)$ where $\epsilon>0$.
%


\section{Pointwise Divergence-Free, H(div)-Conforming Methods} \label{div_method_sec}

In this section, we introduce a class of pointwise divergence-free, H(div)-conforming methods for discretizing Eqs.~\eqref{moment_cons} and \eqref{mass_cons}. These methods are derived from section~\ref{gen_method_sec}, and can be stated formally as follows: 1) set function spaces $Q_h = Q_h^{DC}$ and $\bm{W}_h = \bm{W}_h^{RT}$ or $\bm{W}_h = \bm{W}_h^{BDM}$; 2) choose test functions $\left(q_h, \wbold_h\right)$ that span $Q_h \times \bm{W}_h$; and 3) find unknowns $\left(\widetilde{p}_h, \ubold_h \right)$ in $Q_h  \times \bm{W}_h$ that satisfy 
\begin{align}
& \ipt{ \nabla \cdot \ubold_h}{q_h} = 0, \label{mass_cons_disc_div}
\end{align}
\begin{align}
\nonumber & \ipt{\partial_t \ubold_h }{\wbold_h} - \ipt{ \ubold_h \otimes \ubold_h}{\nabla_h \wbold_h} - \ipt{\widetilde{p}_h}{\nabla \cdot \wbold_h} + \ipbt{\hat{\bm{\sigma}}_{\text{inv}} \, \nbold}{\wbold_h} \\[1.5ex] 
\nonumber & + \nu_h \bigg[ \ipt{\nabla_h \ubold_h + \nabla_h \ubold_{h}^{T}}{\nabla_h \wbold_h} - \ipbt{\hat{\bm{\sigma}}_{\text{vis}} \, \nbold}{\wbold_h} \\[1.5ex]
& + \ipbt{\hat{\bm{\varphi}}_{\text{vis}} - \ubold_h}{\left( \nabla_h \wbold_h + \nabla_h \wbold_{h}^{T} \right) \nbold} \bigg] = \ipt{\widetilde{\bm{f}}}{\wbold_h}. \label{moment_cons_disc_div}
\end{align}
We can now rewrite Eqs.~\eqref{mass_cons_disc_div} and \eqref{moment_cons_disc_div} as the following compact system
\begin{align*}
& b_h \left(\ubold_h, q_h \right) = 0, \\[1.5ex]
\nonumber & \ipt{\partial_t \, \ubold_h}{\wbold_h} + c_h \left(\ubold_h; \ubold_h, \wbold_h \right) +\nu_h a_h \left(\ubold_h, \wbold_h \right)  - b_h \left( \wbold_h, \widetilde{p}_h \right) = \ipt{\widetilde{\bm{f}}}{\wbold_h}.
\end{align*}
The bilinear form $b_h$ was previously defined in Eq.~\eqref{bilinear_press_div}. In addition, the forms $c_h$ and $a_h$ can be written as follows
\begin{align}
c_h \left(\bm{\beta}_h; \vbold_h, \wbold_h \right) &= \ipt{\bm{\beta}_h \cdot \nabla_h \vbold_h}{\wbold_h}  \\[1.5ex]
& \nonumber - \iipbf{ \left( \bm{\beta}_h \cdot \nbold_F \right) \llbracket \vbold_h \rrbracket}{\llcurve \wbold_h \rrcurve} + \zeta \iipbf{\left| \bm{\beta}_h \cdot \nbold_F \right|\llbracket \vbold_h \rrbracket}{\llbracket \wbold_h \rrbracket}, 
\end{align}

\begin{align}
\nonumber a_h \left(\vbold_h, \wbold_h \right) &=  \ipt{ \nabla_h \vbold_h + \nabla_h \vbold_{h}^{T}}{\nabla_h \wbold_h}   -\ipbf{\llbracket \vbold_h \rrbracket}{\llcurve  \nabla_h \wbold_h + \nabla_h \wbold_{h}^{T} \rrcurve \nbold_F} \\[1.5ex]
& -\ipbf{\llbracket \wbold_h \rrbracket}{\llcurve  \nabla_h \vbold_h + \nabla_h \vbold_{h}^{T} \rrcurve \nbold_F} + \ipbf{\frac{\eta}{h_F} \llbracket \vbold_h \rrbracket}{\llbracket \wbold_h \rrbracket}. 
\label{H(div)_bilinear_form}
\end{align}
One should note that these methods are pointwise divergence-free because $\bm{W}_h$ and $Q_h$ are `divergence-conforming' in the following sense: $\nabla \cdot \bm{W}_h \subseteq Q_h$, (see~\cite{john2017divergence} for more details).

\section{Connections with Other Approaches} \label{connections}

In this section, we explore the connections between the general finite element methods in section~\ref{gen_method_sec}, and previously established methods. We begin by discussing the relationship between the viscous bilinear form $a_h$ in Eq.~\eqref{diff_bilinear} and the classical concept of grad-div stabilization. Thereafter, we demonstrate that the conventional analysis of grad-div stabilization does not immediately apply to the new viscous bilinear form. Finally, we explore the connections between the new approach and several additional stabilization procedures.

\subsection{Relationship with Grad-Div Stabilization} \label{grad_div_discuss}

The standard grad-div stabilization term can be written as follows
\begin{align}
    \varepsilon \ipt{\nabla \cdot \vbold_h}{\nabla \cdot \wbold_h},
    \label{grad_div_form}
\end{align}
where $\varepsilon \geq 0$ is a user-defined parameter. Note that the origin of the name `grad-div stabilization' can be illustrated upon integrating Eq.~\eqref{grad_div_form} by parts 
\begin{align}
    \varepsilon \ipt{\nabla \cdot \vbold_h}{\nabla \cdot \wbold_h} = \varepsilon \left[ \ipbt{\nabla \cdot \vbold_h}{\wbold_h \cdot \nbold} -\ipt{\nabla_h \left(\nabla \cdot \vbold_h \right)}{\wbold_h} \right]. \label{grad_div_exp}
\end{align}
If one neglects the first term on the RHS of Eq.~\eqref{grad_div_exp}, or equivalently, if $\nabla \cdot \vbold_h$ is continuous and $\wbold_h$ vanishes on the boundary, then
\begin{align*}
     \varepsilon \ipt{\nabla \cdot \vbold_h}{\nabla \cdot \wbold_h} = - \varepsilon \ipt{\nabla_h \left(\nabla \cdot \vbold_h \right)}{\wbold_h},
\end{align*}
and the stabilization only consists of a grad-div volumetric term.

Now, in order to help stabilize the finite element methods introduced in section~\ref{gen_method_sec}, the grad-div term (Eq.~\eqref{grad_div_form}) can be added to the LHS of Eq.~\eqref{moment_cons_disc}. Upon performing this operation, and setting $\vbold_h = \wbold_h = \ubold_h$ one obtains
\begin{align*}
    \ipt{\partial_t \ubold_h }{\ubold_h} + \varepsilon \ipt{\nabla \cdot \ubold_h}{\nabla \cdot \ubold_h} + \left(\text{Remaining Terms}\right) = 0,
\end{align*}
or equivalently
\begin{align*}
    \frac{1}{2} \frac{d}{dt} \left\| \ubold_h \right\|_{\bm{L}^2 \left(\Omega\right)}^2 + \left(\text{Remaining Terms}\right) = - \varepsilon \left\| \nabla \cdot \ubold_h \right\|_{L^2 \left(\Omega \right)}^2.
\end{align*}
Naturally, the grad-div term helps stabilize the resulting scheme by decreasing the growth rate of the kinetic energy. 

With this in mind, we can now highlight the key distinction between grad-div stabilization and the formulation of $a_h$ in Eq.~\eqref{diff_bilinear}. In particular, upon setting $\vbold_h = \wbold_h = \ubold_h$ in Eq.~\eqref{diff_bilinear} and multiplying by the viscosity coefficient $\nu_h$, one obtains
\begin{align}
    \nu_h a_h \left(\ubold_h, \ubold_h\right) &= \nu_h \Bigg[ \ipt{ \nabla_h \ubold_h + \nabla_h \ubold_{h}^{T}}{\nabla_h \ubold_h} \label{diff_bilinear_sub} \\[1.5ex]
\nonumber & -2 \ipbf{\llbracket \ubold_h \rrbracket}{\llcurve  \nabla_h \ubold_h + \nabla_h \ubold_{h}^{T} \rrcurve \nbold_F} + \ipbf{\frac{\eta}{h_F} \llbracket \ubold_h \rrbracket}{\llbracket \ubold_h \rrbracket}   \\[1.5ex]
\nonumber & +\frac{4}{3} \ipbf{\llbracket \ubold_h \rrbracket}{\llcurve \left(\nabla \cdot \ubold_h \right) \mathbb{I} \rrcurve \nbold_F} - \frac{2}{3} \ipt{\nabla \cdot \ubold_h}{\nabla \cdot \ubold_h} \Bigg]. 
\end{align}
Upon examining Eq.~\eqref{diff_bilinear_sub}, we see that all of the dilatational terms have been placed on the last line. As a result of these terms, some non-standard analysis is required. In particular, let us focus on the second term of the last line
\begin{align*}
    - \frac{2 \, \nu_h}{3} \ipt{\nabla \cdot \ubold_h}{\nabla \cdot \ubold_h}.
\end{align*}
This term is identical to the grad-div stabilization term of Eq.~\eqref{grad_div_form} if we set $\varepsilon = -2 \, \nu_h/3$. Unfortunately, this action is not permitted since we require $\varepsilon \geq 0$ by construction. Therefore, we have shown that the stability (or coercivity) of $a_h$ in Eq.~\eqref{diff_bilinear} does not immediately follow from the standard arguments for grad-div stabilization, as $\varepsilon$ has the wrong sign. In what follows, we will briefly review some attempts to address this issue.

\subsection{Alternative Perspectives}

In his classical textbook, John has suggested (see~\cite{john2016finite}, p.~219) modifying the stress tensor by replacing the coefficient of the dilatation term $(-2\nu/3)$ with $(\xi - 2\nu/3)$, such~that
\begin{align*}
    \widetilde{\bm{\tau}} = \nu \left( \nabla \ubold + \nabla \ubold^T \right) + \left( \xi - \frac{2 \nu}{3} \right) \left(\nabla \cdot \ubold \right) \mathbb{I}.
\end{align*}
Here, $\xi$ is the `bulk viscosity coefficient'. For sufficiently large values of $\xi$ (i.e.~$\xi > 2\nu/3$), the coefficient of the dilatational term becomes positive, and the resulting volumetric contribution can be treated as a grad-div stabilization term. Unfortunately, in accordance with Stokes' hypothesis, it is commonplace to assume that $\xi = 0$. This hypothesis is exactly correct for monatomic gases, and for polyatomic gases, it produces accurate predictions over a wide range of flow conditions~\cite{schlichting2016boundary}. Therefore, it is difficult to justify choosing $\xi \neq 0$, and the resulting argument is not sufficiently general. 

Some authors (e.g. Peterson et al.~\cite{peterson2018overview}), have suggested ignoring the volumetric dilatational term entirely, based on principles from residual-based stabilization theory. In particular, they suggest balancing the volumetric dilatational term by implicitly adding a term of the form
\begin{align}
            &\ipt{\tau_{\text{LSIC}} \, \nabla \cdot \ubold_h }{\nabla \cdot \wbold_h} = \tau_{\text{LSIC}} \ipt{\nabla \cdot \ubold_h }{\nabla \cdot \wbold_h}  \label{resid_stab} \\[1.5ex]
            \nonumber &=\frac{2 \nu}{3} \ipt{\nabla \cdot \ubold_h}{\nabla \cdot \wbold_h},
\end{align}
where $\tau_{\text{LSIC}} = 2\nu/3$ is a least-squares incompressibility condition (LSIC) coefficient. This stabilization term is merely a re-interpretation of grad-div stabilization. The resulting method is only guaranteed to be stable if the inter-element jump terms vanish. Evidently, this assumption is valid for H1-conforming methods, but not for discontinuous Galerkin or H(div)-conforming methods. Furthermore, we claim that the resulting schemes are unnecessarily dissipative. In particular, the stabilization term in Eq.~\eqref{resid_stab} is unnecessary, as we can prove coercivity in its absence (see Lemma~\ref{coercive_vis_lemma}, in the next section).

Finally, Allaire has shown (see~\cite{allaire2007numerical}, p.~138), that there is an alternative approach that applies to $\bm{H}_0^1$-conforming methods, (or more specifically, $\bm{H}_0^1 \cap \bm{C}^0$-conforming methods). In order to illustrate this approach, 
one may start with the gradient transpose term $\ipt{\nabla_h \zbold^{T}}{\nabla_h \zbold}$, 
with $\zbold\in\bm{C}_{c}^{\infty} \left(\Omega\right)$ and $\bm{C}_{c}^{\infty} \left(\Omega\right)$ the space of vector-valued, compactly supported, infinitely differentiable functions. Then, we have
%
\begin{align}
    \ipt{\nabla_h \zbold^{T}}{\nabla_h \zbold} &= \ipbt{\nabla_h \zbold}{\nbold \otimes \zbold} - \ipt{\nabla_h \left( \nabla \cdot \zbold \right)}{\zbold} \label{ibp_ident} \\[1.5ex]
    \nonumber & = \ipbt{\nabla_h \zbold}{\nbold \otimes \zbold} - \ipbt{\nabla \cdot \zbold}{\zbold \cdot \nbold} + \ipt{\nabla \cdot \zbold}{\nabla \cdot \zbold} \\[1.5ex]
    \nonumber & = \ipt{\nabla \cdot \zbold}{\nabla \cdot \zbold}.
\end{align}
Now, we observe that due to the density of $C_{c}^{\infty} \left(\Omega\right)$ in $H_0^1 \left(\Omega\right)$, and with an abuse of notation, Eq.~\eqref{ibp_ident} holds for all  $\zbold \in \bm{H}_0^1 \left(\Omega\right)$. Upon enforcing the inter-element continuity of $\ubold_h$, and substituting this identity into Eq.~\eqref{diff_bilinear_sub} with $\zbold=\ubold_h$ one obtains
\begin{align}
    \nu_h a_h \left(\ubold_h, \ubold_h\right) &= \nu_h \left[ \ipt{ \nabla_h \ubold_h}{\nabla_h \ubold_h} + \frac{1}{3} \ipt{\nabla \cdot \ubold_h}{\nabla \cdot \ubold_h} \right]. \label{bilinear_viscous_h1}
\end{align}
The second term on the RHS of Eq.~\eqref{bilinear_viscous_h1} is simply a grad-div stabilization term with $\varepsilon = \nu_h/3$. Therefore, we have shown that the negative dilatational contribution is completely cancelled by the gradient transpose term for $\bm{H}_0^1 \cap \bm{C}^0$-conforming methods. Of course, the assumption of $\bm{H}_0^1$-conformity means that the approach of Allaire does not apply to general H1-conforming methods with non-zero velocity boundary conditions, H(div)-conforming methods, or discontinuous Galerkin methods.

With all of the  limitations and restrictions (above) in mind, in the next section we explore a significantly more general approach.


\section{Analysis of the Bilinear and Trilinear Forms: General Case}\label{Analysis_in_general}

In this section, we prove that there is a norm associated with the viscous bilinear form $a_h$ in Eq.~\eqref{diff_bilinear}. Furthermore, we prove that the viscous bilinear form $a_h$ is coercive and show that the convective trilinear form $c_h$ in Eq.~\eqref{trilinear} is semi-coercive. 

\begin{lemma}[Establishing a New Norm]
Suppose $\wbold \in \bm{H}^{1}(\mathcal{T}_h)$, the space of piecewise H1 vector fields, and $d=2$ or 3, then
\begin{align}
\vertiii{\wbold}_{\text{sym}} = \left(\left\| \nabla_{h}\wbold+\nabla_h \wbold^{T}-\frac{2}{3} \left( \nabla_h \cdot\wbold \right) \mathbb{I} \right\|_{\bm{L}^2 \left(\Omega \right) \times \bm{L}^2 \left(\Omega \right)}^2 + \ipbf{\frac{1}{h_{F}}\llbracket\wbold\rrbracket}{\llbracket\wbold\rrbracket}  \right)^{1/2},\label{definition}
\end{align}
is a norm on $\Omega$.
\end{lemma}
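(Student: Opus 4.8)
The plan is to verify the three norm axioms, the only substantive one being positive‑definiteness. First I would note that
\begin{align*}
T\wbold = \left( \nabla_{h}\wbold+\nabla_h \wbold^{T}-\tfrac{2}{3}\left( \nabla_h \cdot\wbold \right) \mathbb{I},\ h_F^{-1/2}\llbracket\wbold\rrbracket \right)
\end{align*}
defines a linear map from $\bm{H}^{1}(\mathcal{T}_h)$ into the product of the space of $L^2$ tensor fields on $\Omega$ and $\bm{L}^2(\mathcal{F}_h)$ (well defined because $\nabla_h\wbold\in\bm{L}^2(\Omega)$ and elementwise traces lie in $\bm{L}^2$ of each face), and that $\vertiii{\wbold}_{\text{sym}}$ is exactly the norm of $T\wbold$ in this product $L^2$ space. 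Nonnegativity, absolute homogeneity, and the triangle inequality then follow at once from the corresponding properties of the $L^2$ norm, so the whole argument reduces to showing $\vertiii{\wbold}_{\text{sym}}=0\ \Rightarrow\ \wbold=\bm{0}$.

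Next I would extract the two separate consequences of $\vertiii{\wbold}_{\text{sym}}=0$. Vanishing of the face term $\ipbf{h_F^{-1}\llbracket\wbold\rrbracket}{\llbracket\wbold\rrbracket}$ forces $\llbracket\wbold\rrbracket=\bm{0}$ a.e.\ on every $F\in\mathcal{F}_h$: on interior faces this makes the traces of $\wbold$ from the two adjacent elements coincide, so the piecewise‑$H^1$ field $\wbold$ is in fact globally $\bm{H}^1(\Omega)$; on boundary faces (where $\llbracket\wbold\rrbracket=\wbold$) it gives $\wbold|_{\partial\Omega}=\bm{0}$. Hence $\wbold\in\bm{H}^1_0(\Omega)$ and $\nabla_h\wbold=\nabla\wbold$ a.e. Vanishing of the volume term then reads, globally,
\begin{align*}
\nabla\wbold+\nabla\wbold^{T}-\tfrac{2}{3}\left(\nabla\cdot\wbold\right)\mathbb{I}=\bm{0}\quad\text{a.e. in }\Omega,\qquad\text{i.e.}\qquad \bm{\epsilon}(\wbold)=\tfrac13\left(\nabla\cdot\wbold\right)\mathbb{I},
\end{align*}
where $\bm{\epsilon}(\wbold)=\tfrac12(\nabla\wbold+\nabla\wbold^{T})$; taking Frobenius norms pointwise and using $|\mathbb{I}|^{2}=d$ gives $|\bm{\epsilon}(\wbold)|^{2}=\tfrac{d}{9}(\nabla\cdot\wbold)^{2}$, hence after integration $\|\bm{\epsilon}(\wbold)\|^{2}_{\bm{L}^2(\Omega)}=\tfrac{d}{9}\|\nabla\cdot\wbold\|^{2}_{L^2(\Omega)}$.

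To finish I would invoke the Korn identity on $\bm{H}^1_0(\Omega)$: two integrations by parts show that for $\zbold\in\bm{C}^{\infty}_{c}(\Omega)$,
\begin{align*}
2\|\bm{\epsilon}(\zbold)\|^{2}_{\bm{L}^2(\Omega)}=\|\nabla\zbold\|^{2}_{\bm{L}^2(\Omega)}+\|\nabla\cdot\zbold\|^{2}_{L^2(\Omega)},
\end{align*}
and, both sides being continuous in the $\bm{H}^1$ norm and $\bm{C}^{\infty}_{c}(\Omega)$ being dense in $\bm{H}^1_0(\Omega)$, the identity persists for $\wbold$. Substituting the relation from the previous step yields
\begin{align*}
\|\nabla\wbold\|^{2}_{\bm{L}^2(\Omega)}=\left(\tfrac{2d}{9}-1\right)\|\nabla\cdot\wbold\|^{2}_{L^2(\Omega)},
\end{align*}
and since $\tfrac{2d}{9}-1<0$ for both $d=2$ and $d=3$, the left-hand side is nonnegative while the right-hand side is nonpositive; therefore $\nabla\wbold=\bm{0}$, so $\wbold$ is constant on each connected component of $\Omega$, and $\wbold\in\bm{H}^1_0(\Omega)$ forces $\wbold\equiv\bm{0}$.

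The step I expect to be the main obstacle is this last one, and specifically the three-dimensional case. The volume integrand has a nontrivial kernel on each element --- the conformal Killing fields, which are vector polynomials of degree $\le 2$ --- so definiteness genuinely cannot be read off from the volume term alone; it can only be recovered after the jump term has promoted $\wbold$ to a global $\bm{H}^1_0$ field. Moreover, when $d=3$ one has $\tfrac{2}{3}=\tfrac{2}{d}$, so the tensor in \eqref{definition} is precisely (twice) the deviatoric symmetric gradient, and a purely pointwise Cauchy--Schwarz estimate is borderline there ($\tfrac{d}{9}=\tfrac1d$); this is exactly why the global Korn identity, rather than any element-local or pointwise bound, is the tool that closes the argument. (For $d=2$ the pointwise Cauchy--Schwarz bound already forces $\nabla\cdot\wbold=0$ a.e., after which Korn's inequality suffices, but the Korn-identity route above covers both dimensions uniformly.)
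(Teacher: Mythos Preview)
Your proof is correct, and it takes a genuinely different---and considerably shorter---route than the paper's.

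The paper proceeds element by element: it explicitly computes the kernel of $\nabla\wbold+\nabla\wbold^{T}-\tfrac{2}{3}(\nabla\cdot\wbold)\mathbb{I}$ on a single element (rigid motions for $d=2$, the ten-dimensional space of conformal Killing fields for $d=3$), and then shows that any kernel element which vanishes on one face of an element must vanish on the whole element. In three dimensions this last step is done by brute force: the general kernel element is written out with ten free constants, restricted to an arbitrary plane, and the resulting polynomial system is solved case by case. Vanishing of the jump term then gives continuity across interior faces and zero trace on $\partial\Omega$, and the face-to-element argument is propagated inward from the boundary.

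You instead use the vanishing jumps first, to promote $\wbold$ to a global $\bm{H}^1_0(\Omega)$ field, and then close with the integral identity $2\|\bm{\epsilon}(\wbold)\|^2=\|\nabla\wbold\|^2+\|\nabla\cdot\wbold\|^2$ on $\bm{H}^1_0(\Omega)$ (which the paper in fact derives in Section~\ref{connections} as Allaire's identity, Eq.~\eqref{ibp_ident}, but does not use in the proof of this lemma). This bypasses the explicit kernel computation and the three-case polynomial analysis entirely, and handles $d=2$ and $d=3$ uniformly. The paper's approach, on the other hand, yields the element-wise kernel structure as a byproduct, which is of some independent interest and does not rely on the boundary condition being homogeneous until the very last propagation step.
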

\begin{proof}
The only nontrivial part of the proof is to show that if $\vertiii{\wbold}_{\text{sym}}=0$, then $\bm{w}=0$. With this in mind, let us first consider the case of $d=2$ and find the kernel of
\begin{align}
\nabla\wbold+\nabla \wbold^{T}-\frac{2}{3} \left( \nabla \cdot\wbold \right) \mathbb{I}, \label{kernel}
\end{align}
for a single element. Towards this end, we can obtain four equations, one for each of the tensor components as follows 
\begin{align*}
\frac{\partial w_i}{\partial x_j}+\frac{\partial w_j}{\partial x_i}-\frac{2}{3}\frac{\partial w_k}{\partial x_k}\delta_{ij}=0.
\end{align*}
If we consider only the diagonal components, we find that
\begin{align*}
\frac{\partial w_1(x_1, x_2)}{\partial x_1}=\frac{\partial w_2 (x_1,x_2)}{\partial x_2}=0.
\end{align*}
Therefore, $w_1 = w_1 \left(x_2\right)$ and $w_2 = w_2 \left(x_1\right)$.
Next, based on the off-diagonal components of the tensor, we conclude that
\begin{align*}
\frac{\partial w_1(x_2)}{\partial x_2}=-\frac{\partial w_2(x_1)}{\partial x_1}=k_1, \label{equation_1}
\end{align*}
where $k_1$ is a generic constant. Thus, the kernel space $\mathcal{W}_{ker}$ of Eq.~\eqref{kernel} can be expressed as follows: $\wbold \in \mathbb{R}^2$ such that
\begin{align}
\wbold 
= \begin{bmatrix}
          w_1 \\
          w_2
\end{bmatrix}
=\begin{bmatrix}
0 & k_1 \\
-k_1 & 0 
\end{bmatrix}\begin{bmatrix}
           x_1 \\
           x_2 \\
\end{bmatrix}+
\begin{bmatrix}
           k_2 \\
           k_3 \\
\end{bmatrix},
\end{align}
where $k_2$ and $k_3$ are constants. 

Now, since we have established an explicit definition for the kernel space on each element, we can assert that $\vertiii{\wbold}_{\text{sym}}=0$ implies that $\wbold \in \mathcal{W}_{ker}$ on each element. Furthermore, by the definition of $\vertiii{\cdot}_{\text{sym}}$, we note that $\vertiii{\wbold}_{\text{sym}}=0$ implies that $\wbold$ is continuous across interior edges, and that $\wbold = 0$ on the boundary of the domain. It turns out that if $\wbold \in \mathcal{W}_{ker}$ over an arbitrary element and $\wbold = 0$ on at least one edge of the element, then $\wbold$ vanishes over the entire element (see~\cite{allaire2007numerical}, p.~140). By this fact, and the fact that $\wbold$ vanishes on the boundary of the domain and is continuous over interior edges, then $\wbold$ vanishes over the entire domain. This completes our proof for the case of~$d =2$. \\[1.5ex]
Now, let us consider the case of $d=3$. In accordance with \cite{schirra2012new,breit2017trace}, the kernel space $\mathcal{W}_{ker}$ can be written as follows: $\wbold \in \mathbb{R}^3$ such that
\begin{align}
\label{w_ker}
\nonumber\bm{w} &= \begin{bmatrix}
          w_1 \\
          w_2 \\
          w_3
\end{bmatrix}
=\begin{bmatrix}
          0  & k_1 & k_2\\
          -k_1 & 0 & k_3 \\
          -k_2 & -k_3 & 0
\end{bmatrix}\begin{bmatrix}
          x_1 \\
          x_2 \\
          x_3
\end{bmatrix}+
\begin{bmatrix}
          k_4 \\
          k_5 \\
          k_6
\end{bmatrix}-(x_1^{2}+x_2^{2}+x_3^{2})\begin{bmatrix}
          k_8 \\
          k_9 \\
          k_{10}
\end{bmatrix}\\[1.5ex]
&+\big(2(k_8x_1+k_9x_2+k_{10}x_3)+k_7\big)
\begin{bmatrix}
          x_1 \\
          x_2 \\
          x_3
\end{bmatrix},
\end{align}
where $k_1, k_2, \ldots, k_{10}$ are constants. 

The key is to prove that if $\wbold \in \mathcal{W}_{ker}$ vanishes on a plane (e.g.~the face of an element), then $\wbold =0$ throughout the element. Without loss of generality, we begin by assuming that $\wbold$ vanishes on a plane of the form  
\begin{align}
x_1=bx_2+cx_3+d,    \label{plane}
\end{align}
%
where $b$, $c$, and $d$ are constants. Next, we substitute Eq.~\eqref{plane} into Eq.~\eqref{w_ker}, and rearrange the result in order to obtain
\begin{align}
    \nonumber &\left(k_4 + d k_7 + d^2 k_8 \right) + \left( k_1 + b k_7 + 2 b d k_8 + 2 d k_9 \right) x_2 \\
    \nonumber & + \left(- k_8  + b^2 k_8  + 2 b k_9 \right) x_{2}^2
    + \left( 2 d k_{10} + k_2 + c k_7 + 2 c d k_8 \right) x_3 \\
    & + \left( 2 b k_{10} + 2 b c k_8 + 2 c k_9 \right) x_2 x_3 + \left( 2 c k_{10} - k_8 + c^2 k_8 \right) x_{3}^{2} = 0,  \label{equ_1} \\[1.5ex]
\nonumber & \left(-d k_1 + k_5 - d^2 k_9 \right) +\left( - b k_1 + k_7 + 2 d k_8 - 2 b d k_9 \right) x_2 \\
& \nonumber + \left(
  2 b k_8 + k_9 - b^2 k_9 \right) x_2^2 
  + \left( - c k_1 + k_3 - 
  2 c d k_9 \right) x_3 \\
  & + \left( 2 k_{10} + 2 c k_8 - 2 b c k_9 \right) x_2 x_3 + \left( - k_9 -
   c^2 k_9 \right) x_3^2 = 0,  \label{equ_2} \\[1.5ex]
\nonumber & \left(-d^2 k_{10} - d k_2 + k_6 \right) + \left( - 2 b d k_{10} - b k_2 - k_3 \right) x_2 \\
\nonumber & + \left( - k_{10} - b^2 k_{10} \right) x_2^2 
+ \left( - 2 c d k_{10} - c k_2 + k_7 + 2 d k_8 \right) x_3 \\
& + \left( - 2 b c k_{10} + 2 b k_8 + 2 k_9 \right) x_2 x_3 + \left( k_{10} -  c^2 k_{10} + 2 c k_8 \right) x_3^2 = 0.  \label{equ_3}
\end{align}
In order for these equations to hold, the coefficients in front of the monomial terms $x_2, x_3, x_2x_3, x_2^{2}$ and $x_{3}^{2}$ must vanish. More specifically, by examining the coefficient of $x_3^2$ in Eq.~\eqref{equ_2} and the coefficient of $x_2^2$ in Eq.~\eqref{equ_3}, we obtain that $k_9 = k_{10} = 0$. Next, we divide the remainder of the proof into the following three cases:\\[1.5ex]
\emph{Case 1)} $b \neq 0, c \in \mathbb{R}, d \in \mathbb{R}$ \\[1.5ex]
Upon inspecting the coefficient of $x_2^2$ in Eq.~\eqref{equ_2}, we immediately obtain that $k_8=0$. The remaining nontrivial equations are 
\begin{align*}
k_5-dk_1&=0, & -bk_1+k_7&=0, & -ck_{1}+k_3&=0,\\
k_4+dk_7&=0, &  bk_7+k_1&=0, & ck_7+k_2&=0,\\
k_6-dk_2&=0, & -bk_2-k_3&=0, & -ck_2+k_7&=0. 
\end{align*}
Upon multiplying the third equation of the second row by $k_2$ and the third equation of the third row by $k_7$, and then adding the results together, we obtain $k_{2}^{2}+k_{7}^{2}=0$. Therefore, $k_2 = k_7 = 0$. Now, it is straightforward to obtain $k_1=k_3=k_4=k_5=k_6=0$ by inspecting the remaining equations in rows 1--3.\\[1.5ex]
\emph{Case 2)} $b=0, c \neq 0, d \in \mathbb{R}$ \\[1.5ex]
Upon inspecting the coefficient of $x_3^2$ in Eq.~\eqref{equ_3}, we immediately obtain that $k_8=0$. The remaining nontrivial equations are 
\begin{align*}
k_4+dk_7&=0, & k_1&=0, &ck_7+k_2&=0,  \\
k_5-dk_1&=0, & k_7&=0, & -ck_{1}+k_3&=0,\\
k_6-dk_2&=0, & k_3&=0, &  -ck_2+k_7&=0.
\end{align*}
It immediately follows that $k_1=k_2=k_3=k_4=k_5=k_6=k_7=0$ by examining these equations.\\[1.5ex]
\emph{Case 3)} $b=0, c=0, d \in \mathbb{R}$ \\[1.5ex]
Finally, upon inspecting the coefficient of $x_2^2$ in Eq.~\eqref{equ_1}, we immediately obtain that $k_8 = 0$. The remaining  nontrivial equations are 
\begin{align*}
k_4+dk_7&=0, & k_1&=0, & k_2&=0,  \\
k_5-dk_1&=0, & k_7&=0, & k_3&=0, \\
k_6-dk_2&=0, & k_3&=0, & k_7&=0.
\end{align*}
It immediately follows that $k_1=k_2=k_3=k_4=k_5=k_6=k_7=0$ by examining these equations. 

Therefore, we have proved that if $\wbold \in \mathcal{W}_{ker}$ vanishes on at least one face of an element, then $\wbold =0$ on the entire element. Finally, the proof for the case of $d=3$ is completed by observing that if $\vertiii{\wbold}_{\text{sym}}=0$, then we have $\wbold \in \mathcal{W}_{ker}$ on each element, $\wbold$ vanishes on the domain boundary, and $\wbold$ is continuous across the interior faces. These facts combine to ensure that $\wbold$ vanishes throughout the domain.
\end{proof}

\begin{lemma}[Coercivity of the Viscous Bilinear Form]
Suppose we choose generic test functions $\wbold_h, \vbold_h \in \bm{W}_h$, and we assume that $d = 2$ or 3. Furthermore, we choose $\eta > 2C_{\text{tr}}^2N_{\partial}$, where $C_{\text{tr}}$ and $N_{\partial}$ are constants which depend on the mesh topology. Then, the bilinear form $a_h$ in Eq.~\eqref{diff_bilinear} is coercive on $\bm{W}_h$, such that
\begin{align}
    \forall \wbold_h \in \bm{W}_h,\qquad a_h \left( \wbold_h, \wbold_h \right) \geq C \vertiii{\wbold_h}_{\text{sym}}^2, \label{coercive_res}
\end{align}
where $C$ is a positive constant independent of $h$.
\label{coercive_vis_lemma}
\end{lemma}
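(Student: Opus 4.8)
The plan is to test $a_h$ against itself and run a standard symmetric interior penalty coercivity argument; the only twist is that the ``diffusive'' quantity that the norm controls is the full symmetric--deviatoric tensor rather than the velocity gradient, so no Korn-type inequality will be needed. Abbreviate $\sigmabold(\wbold_h) := \nabla_h \wbold_h + \nabla_h \wbold_h^{T} - \tfrac{2}{3}\left(\nabla\cdot\wbold_h\right)\mathbb{I}$, so that $\vertiii{\wbold_h}_{\text{sym}}^2 = \|\sigmabold(\wbold_h)\|_{\bm{L}^2(\Omega)\times\bm{L}^2(\Omega)}^2 + \ipbf{\tfrac{1}{h_F}\llbracket\wbold_h\rrbracket}{\llbracket\wbold_h\rrbracket}$ (which is a norm on $\bm{W}_h\subset\bm{H}^1(\mathcal{T}_h)$ by the preceding lemma). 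Setting $\vbold_h = \wbold_h$ in Eq.~\eqref{diff_bilinear} and merging the two symmetric boundary contributions gives
\[
 a_h(\wbold_h,\wbold_h) \;=\; \|\sigmabold(\wbold_h)\|_{\bm{L}^2(\Omega)\times\bm{L}^2(\Omega)}^2 \;-\; 2\,\ipbf{\llbracket\wbold_h\rrbracket}{\llcurve\sigmabold(\wbold_h)\rrcurve\nbold_F} \;+\; \ipbf{\tfrac{\eta}{h_F}\llbracket\wbold_h\rrbracket}{\llbracket\wbold_h\rrbracket} \;=:\; T_1 + T_2 + T_3 ,
\]
with $T_1,T_3\ge 0$; the whole task is to absorb $|T_2|$ into $T_1$ and $T_3$.

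For the cross term I would apply Cauchy--Schwarz face by face, inserting the balancing weights $h_F^{\pm 1/2}$, to obtain $|T_2| \le 2\,\big(\ipbf{h_F^{-1}\llbracket\wbold_h\rrbracket}{\llbracket\wbold_h\rrbracket}\big)^{1/2}\,\big(\sum_{F\in\mathcal{F}_h} h_F\,\|\llcurve\sigmabold(\wbold_h)\rrcurve\nbold_F\|_{L^2(F)}^2\big)^{1/2}$. The second factor is then handled by bounding the averaged normal trace through $\|\llcurve\sigmabold(\wbold_h)\rrcurve\nbold_F\|_{L^2(F)}^2 \le \tfrac12\big(\|\sigmabold(\wbold_h)|_{K^+}\|_{L^2(F)}^2 + \|\sigmabold(\wbold_h)|_{K^-}\|_{L^2(F)}^2\big)$, reindexing the face sum as a sum over elements and their faces (at most $N_{\partial}$ per element), and applying the discrete trace inequality $h_F\,\|\sigmabold(\wbold_h)|_{K}\|_{L^2(\partial K)}^2 \le C_{\text{tr}}^2\,\|\sigmabold(\wbold_h)|_{K}\|_{L^2(K)}^2$ --- which is legitimate precisely because $\sigmabold(\wbold_h)$ is piecewise polynomial on the shape-regular mesh and $h_F \le h_K$. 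Summing over elements yields $\sum_{F} h_F\|\llcurve\sigmabold(\wbold_h)\rrcurve\nbold_F\|_{L^2(F)}^2 \le \tfrac12 N_{\partial}C_{\text{tr}}^2\,T_1$, and hence $|T_2| \le \sqrt{2 N_{\partial}}\,C_{\text{tr}}\,T_1^{1/2}\,\big(\ipbf{h_F^{-1}\llbracket\wbold_h\rrbracket}{\llbracket\wbold_h\rrbracket}\big)^{1/2}$.

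Finally, Young's inequality with a free parameter $\delta\in(0,1)$ gives $|T_2| \le \delta\,T_1 + \tfrac{N_{\partial}C_{\text{tr}}^2}{2\delta}\,\ipbf{h_F^{-1}\llbracket\wbold_h\rrbracket}{\llbracket\wbold_h\rrbracket}$, so that
\[
 a_h(\wbold_h,\wbold_h) \;\ge\; (1-\delta)\,T_1 \;+\; \Big(\eta - \tfrac{N_{\partial}C_{\text{tr}}^2}{2\delta}\Big)\,\ipbf{h_F^{-1}\llbracket\wbold_h\rrbracket}{\llbracket\wbold_h\rrbracket} .
\]
Choosing $\delta\in(0,1)$ appropriately (e.g. $\delta=\tfrac12$) together with the hypothesis $\eta > 2C_{\text{tr}}^2 N_{\partial}$ (which is comfortably above the threshold the bookkeeping produces) makes both coefficients strictly positive; taking $C := \min\{\,1-\delta,\ \eta - N_{\partial}C_{\text{tr}}^2/(2\delta)\,\}$ and recalling $\vertiii{\wbold_h}_{\text{sym}}^2 = T_1 + \ipbf{h_F^{-1}\llbracket\wbold_h\rrbracket}{\llbracket\wbold_h\rrbracket}$ delivers~\eqref{coercive_res}. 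Since $C_{\text{tr}}$ and $N_{\partial}$ depend only on the polynomial degree, the mesh topology and the shape-regularity, $C$ is independent of $h$.

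The step I expect to matter most conceptually --- though it is not really hard once seen --- is recognising that the discrete trace inequality must be applied to $\sigmabold(\wbold_h)$ \emph{itself}, not to $\nabla_h\wbold_h$ or to the symmetric gradient. Because $\sigmabold(\wbold_h)$ is a polynomial tensor field this costs nothing, and, in contrast to the situations discussed in Section~\ref{connections}, the awkward negative dilatational term $-\tfrac{2}{3}(\nabla\cdot\wbold_h)\mathbb{I}$ is never isolated or ``cancelled'': it simply rides inside $T_1$, which by construction \emph{is} the volumetric part of $\vertiii{\cdot}_{\text{sym}}^2$. The only remaining care is to track the constants (the $\tfrac12$ from averaging, the face count $N_{\partial}$, the ratio $h_F/h_K$) so that the stated condition on $\eta$ emerges.
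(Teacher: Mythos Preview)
Your overall strategy---symmetric interior penalty coercivity, with the discrete trace inequality applied directly to the full tensor $\sigmabold(\wbold_h)$---is exactly the paper's. But your opening identity is wrong: setting $\vbold_h=\wbold_h$ in Eq.~\eqref{diff_bilinear} produces the volumetric term $\ipt{\sigmabold(\wbold_h)}{\nabla_h\wbold_h}$, and this is \emph{not} $\|\sigmabold(\wbold_h)\|^2$. Because $\sigmabold(\wbold_h)$ is symmetric one has $\ipt{\sigmabold(\wbold_h)}{\nabla_h\wbold_h}=\ipt{\sigmabold(\wbold_h)}{\bm{\epsilon}(\wbold_h)}$, and a short computation (this is the algebraic identity the paper begins with) gives
\[
\ipt{\sigmabold(\wbold_h)}{\nabla_h\wbold_h}=\tfrac{1}{2}\|\sigmabold(\wbold_h)\|_{\bm{L}^2(\Omega)\times\bm{L}^2(\Omega)}^2+\tfrac{2(3-d)}{9}\|\nabla_h\cdot\wbold_h\|_{L^2(\Omega)}^2,
\]
so $T_1=\tfrac{1}{2}\|\sigmabold(\wbold_h)\|^2$ plus a nonnegative divergence contribution for $d\le 3$, not $\|\sigmabold(\wbold_h)\|^2$.

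This factor of $\tfrac{1}{2}$ is precisely what makes the stated threshold $\eta>2C_{\text{tr}}^2N_{\partial}$ tight rather than ``comfortably above'': rerunning your Young-inequality step with $T_1=\tfrac{1}{2}\|\sigmabold(\wbold_h)\|^2$ forces $\delta<\tfrac{1}{2}$, and letting $\delta\to\tfrac{1}{2}^-$ recovers exactly the paper's condition. After this correction the remainder of your argument (face-wise Cauchy--Schwarz, the $\tfrac{1}{2}$ from averaging, reindexing to elements, trace inequality on $\sigmabold(\wbold_h)$) matches the paper line for line, and your concluding remark---that the dilatational term is never isolated but simply rides inside $\|\sigmabold(\wbold_h)\|^2$---is the right conceptual takeaway.
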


\begin{proof}
Consider the following identity
\begin{align*}
\nonumber & 2 \left( \nabla_{h}\wbold_{h}:\nabla_{h}\vbold_{h}+\nabla_{h}\wbold_{h}:\nabla_{h}\vbold_{h}^{T}-\frac{2}{3}\left( \nabla_h \cdot\wbold_{h} \right) \left(\nabla_h \cdot\vbold_{h} \right) \right) \\[1.5ex]
& = \left(\nabla_{h} \vbold_{h}+\nabla_{h}\vbold_{h}^T-\frac{2}{3} \left(\nabla_h \cdot\vbold_{h} \right)\mathbb{I} \right): \left(\nabla_{h}\wbold_{h}+\nabla_{h}\wbold_{h}^T-\frac{2}{3} \left( \nabla_h \cdot \wbold_h \right) \mathbb{I} \right) \\[1.5ex]
\nonumber &+\frac{4 \left(3-d\right)}{9} \left(\nabla_h \cdot \wbold_h \right) \left(\nabla_h \cdot \vbold_h \right).
\end{align*}
Using this result, the bilinear form in Eq.~\eqref{diff_bilinear} becomes
\begin{align}
a_{h}(\vbold_{h},\wbold_{h})&=\frac{1}{2} \ipt{\nabla_{h} \vbold_{h}+\nabla_{h}\vbold_{h}^T-\frac{2}{3} \left(\nabla_h \cdot\vbold_{h} \right)\mathbb{I}}{\nabla_{h} \wbold_{h}+\nabla_{h}\wbold_{h}^T-\frac{2}{3} \left(\nabla_h \cdot\wbold_{h}\right)\mathbb{I}} \label{diff_bilinear_one} \\[1.5ex]
\nonumber &+ \frac{2 \left(3-d\right)}{9} \ipt{\nabla_h \cdot \vbold_h}{\nabla_h \cdot \wbold_h} - \ipbf{ \llbracket\vbold_{h}\rrbracket}{\llcurve \nabla_{h}\wbold_{h}+\nabla_{h}\wbold_{h}^{T}-\frac{2}{3} \left(\nabla_h \cdot\wbold_{h} \right) \mathbb{I} \rrcurve\nbold_F} \\[1.5ex]
\nonumber &- \ipbf{\llbracket\wbold_{h}\rrbracket}{\llcurve\nabla_{h}\vbold_{h}+\nabla_{h}\vbold_{h}^{T}-\frac{2}{3} \left(\nabla_h \cdot\vbold_{h} \right) \mathbb{I} \rrcurve\nbold_F} + \ipbf{\frac{\eta}{h_{F}}\llbracket\vbold_{h} \rrbracket}{\llbracket\wbold_{h} \rrbracket}.
\end{align}
By setting $\vbold_{h}=\wbold_{h}$ in Eq.~\eqref{diff_bilinear_one}, we have
\begin{align}
 a_{h}(\wbold_{h},\wbold_{h}) 
& = \frac{1}{2} \left\| \nabla_{h}\wbold_{h}+\nabla_h \wbold_{h}^{T}-\frac{2}{3} \left( \nabla_h \cdot\wbold_{h} \right) \mathbb{I} \right\|_{\bm{L}^2 \left(\Omega \right) \times \bm{L}^2 \left(\Omega \right)}^2 + \frac{2 \left(3-d\right)}{9} \left\| \nabla_h \cdot \wbold_h \right\|_{L^2 \left(\Omega \right)}^2 \label{diff_bilinear_two} \\[1.5ex] 
\nonumber & - 2 \ipbf{\llbracket\wbold_{h}\rrbracket}{\llcurve\nabla_{h}\wbold_{h}+\nabla_{h}\wbold_{h}^{T}-\frac{2}{3} \left(\nabla_h \cdot\wbold_{h} \right) \mathbb{I} \rrcurve\nbold_F} + \eta \left| \wbold_{h} \right|_{J}^2,
\end{align}
where $\left| \wbold_{h} \right|_{J}$ is defined to be
\begin{align*}
\left| \wbold_{h} \right|_{J} = \left( \sum_{F \in \mathcal{F}_{h}}h_{F}^{-1} \left\| \llbracket \wbold_{h}\rrbracket| \right|^{2}_{\bm{L}^2 \left(F\right)} \right)^{1/2}.
\end{align*}
After examining Eq.~\eqref{diff_bilinear_two}, we find that the key is to bound the third term on the RHS. In what follows, we prove that for all $(\wbold_{h},\wbold_{h}) \in \bm{W}_{h} \times \bm{W}_{h}$, 
\begin{align}
& \left|  \ipbf{\llbracket\wbold_{h}\rrbracket}{\llcurve \nabla_{h}\wbold_{h}+\nabla_{h}\wbold_{h}^{T}-\frac{2}{3} \left(\nabla_h \cdot\wbold_{h} \right) \mathbb{I} \rrcurve\nbold_F} \right| \label{coerce_zero}  \\[1.5ex]
\nonumber & \leq \left(\sum_{K \in \mathcal{T}_h}\sum_{F \in \mathcal{F}_{K}} h_{F} \left\| \left( \nabla_{h}\wbold_{h}+\nabla_{h}\wbold_{h}^{T}-\frac{2}{3} \left(\nabla_h \cdot\wbold_{h} \right) \mathbb{I} \right)\bigg|_{K} \cdot\nbold_{F} \right\|^2_{\bm{L}^2(F)} \right)^{1/2} |\wbold_{h}|_{J}.
\end{align}
In accordance with a similar presentation in~\cite{DiPietro11}, we begin by identifying neighboring elements $K_1$ and $K_2$ that share a face $F = \partial K_1 \cap \partial K_2$. Then, we define $a_i=(\nabla_{h}\wbold_{h}+\nabla_{h}\wbold_{h}^{T}-\frac{2}{3} \left(\nabla_h \cdot\wbold_{h} \right)\mathbb{I})|_{K_{i}}\cdot\nbold_{F}$ and let $i =\{1,2\}$, in order to obtain
\begin{align*}
&\left\langle\llbracket\wbold_{h}\rrbracket,\llcurve\nabla_{h}\wbold_{h}+\nabla_{h}\wbold_{h}^{T}-\frac{2}{3} \left( \nabla_h \cdot\wbold_{h}\right) \mathbb{I} \rrcurve\nbold_F\right\rangle_{F} = \left\langle\frac{1}{2}(a_{1}+a_{2}),\llbracket\wbold_{h}\rrbracket\right\rangle_{F}\\[1.5ex]
\leq \nonumber & \left( \int_{F}  \frac{1}{4}(a_{1}+a_{2}) \cdot (a_{1}+a_{2}) \,  dA \right)^{1/2} \left( \int_{F}  \llbracket \wbold_h \rrbracket \cdot \llbracket \wbold_h \rrbracket  dA \right)^{1/2} \\[1.5ex]
\leq \nonumber & \frac{1}{2} \left( \left( \int_{F} a_1 \cdot a_1 \,  dA \right)^{1/2} + \left( \int_{F} a_2 \cdot a_2 \, dA \right)^{1/2} \right) \left( \int_{F} \llbracket \wbold_h \rrbracket \cdot \llbracket \wbold_h \rrbracket dA \right)^{1/2} \\[1.5ex]
\leq \nonumber & \frac{1}{2} \left( \left\| a_1 \right\|_{\bm{L}^2(F)} + \left\| a_2 \right\|_{\bm{L}^2(F)} \right) \left\| \llbracket \wbold_{h}\rrbracket \right\|_{\bm{L}^2(F)} \\[1.5ex]
\leq \nonumber &\left(\frac{1}{2}h_{F} \left( \left\| a_1 \right\|^2_{\bm{L}^2(F)}+ \left\| a_{2} \right\|^2_{\bm{L}^2(F)} \right) \right)^{1/2} h_{F}^{-1/2} \left\| \llbracket \wbold_{h}\rrbracket \right\|_{\bm{L}^2(F)},
\end{align*}
for all interior faces $\mathcal{F}_h^i$. Note that H\"{o}lder's inequality, the Triangle inequality, and the root-mean-square arithmetic-mean inequality have been used above. 

We can obtain a similar result for all boundary faces $\mathcal{F}_h^{\partial}$, 
\begin{align*}
& \left\langle\llbracket\wbold_{h}\rrbracket,\llcurve\nabla_{h}\wbold_{h}+\nabla_{h}\wbold_{h}^{T}-\frac{2}{3} \left(\nabla_h \cdot\wbold_{h} \right) \mathbb{I} \rrcurve\nbold_F\right\rangle_{F} \\[1.5ex]
&\nonumber \leq h_{F}^{1/2} \left\| \left( \nabla_{h}\wbold_{h}+\nabla_{h}\wbold_{h}^{T}-\frac{2}{3} \left(\nabla_h \cdot\wbold_{h} \right) \mathbb{I} \right)\bigg|_{K}\cdot\nbold_{F} \right\|_{\bm{L}^2(F)}h_{F}^{-1/2} \left\| \llbracket \wbold_{h}\rrbracket\right\|_{\bm{L}^2(F)}.
\end{align*}
Summing over all mesh faces, using the Cauchy-Schwarz inequality, and regrouping the face contributions for each element, we get the desired result in Eq.~\eqref{coerce_zero}.

Now, we can continue by rewriting the RHS of Eq.~\eqref{coerce_zero} as follows
\begin{align}
& \sum_{K \in \mathcal{T}_h}\sum_{F \in \mathcal{F}_{K}} h_F \left\| \left(\nabla_{h}\wbold_{h}+\nabla_{h}\wbold_{h}^{T}-\frac{2}{3} \left(\nabla_h \cdot\wbold_{h} \right) \mathbb{I} \right)\bigg|_{K}\cdot \nbold_{F} \right\|^2_{\bm{L}^2(F)} \label{diff_trace} \\[1.5ex]
\nonumber &\leq\sum_{K \in \mathcal{T}_{h}} h_{K} \left\| \left( \nabla_{h}\wbold_{h}+\nabla_h \wbold_{h}^{T}-\frac{2}{3} \left( \nabla_h \cdot\wbold_{h} \right) \mathbb{I} \right) \bigg|_{K}\cdot \nbold_{F} \right\|^2_{\bm{L}^2(\partial K)} \\[1.5ex]
\nonumber &\leq C_{\text{tr}}^2N_{\partial} \left\| \nabla_{h}\wbold_{h}+\nabla_h \wbold_{h}^{T}-\frac{2}{3} \left( \nabla_h \cdot\wbold_{h} \right) \mathbb{I} \right\|^2_{\bm{L}^2 \left(\Omega \right) \times \bm{L}^2 \left(\Omega \right)},
\end{align}
where a discrete trace inequality has been used in the last line (cf.~\cite{DiPietro11}, p. 27). Here, the constant $C_{\text{tr}}$ depends on the shape parameters of elements in the mesh, and $N_{\partial}$ characterizes the number of boundary faces. The result in Eq.~\eqref{diff_trace} leads to 
\begin{align}
& \left|  \ipbf{\llbracket\wbold_{h}\rrbracket}{\llcurve \nabla_{h}\wbold_{h}+\nabla_{h}\wbold_{h}^{T}-\frac{2}{3} \left(\nabla_h \cdot\wbold_{h} \right) \mathbb{I} \rrcurve\nbold_F} \right| \label{coerce_two} \\[1.5ex]
\nonumber &\leq C_{\text{tr}} N_{\partial}^{1/2} \left\| \nabla_{h}\wbold_{h}+\nabla_h \wbold_{h}^{T}-\frac{2}{3} \left( \nabla_h \cdot\wbold_{h} \right) \mathbb{I} \right\|_{\bm{L}^2 \left(\Omega \right) \times \bm{L}^2 \left(\Omega \right)} \left|\wbold_{h} \right|_{J}.
\end{align}
Next, in preparation for the final coercivity result, we will define
\begin{align*}
a &= \left\| \nabla_{h}\wbold_{h}+\nabla_h \wbold_{h}^{T}-\frac{2}{3} \left( \nabla_h \cdot\wbold_{h} \right) \mathbb{I} \right\|_{\bm{L}^2 \left(\Omega \right) \times \bm{L}^2 \left(\Omega \right)},  \\[1.5ex]
b &=\ipbf{\frac{1}{h_{F}}\llbracket\wbold_{h}\rrbracket}{\llbracket\wbold_{h}\rrbracket}^{1/2}, \qquad c = \left\| \nabla_h \cdot \wbold_h \right\|_{L^2 \left(\Omega \right)}.
\end{align*}
Now, combining these definitions with Eqs.~\eqref{diff_bilinear_two} and \eqref{coerce_two} we have
\begin{align}
a_{h}(\wbold_{h},\wbold_{h}) &\geq \frac{1}{2}(a^2-4C_{\text{tr}}N_{\partial}^{1/2} ab+2\eta b^2) + \frac{2\left(3-d\right)}{9} c^2 \label{coercive_ineq} \\[1.5ex]
\nonumber & \geq \frac{1}{2}(a^2-4C_{\text{tr}}N_{\partial}^{1/2} ab+2\eta b^2)\geq \left(\frac{\eta-2C_{\text{tr}}^2N_{\partial}}{1+2\eta} \right)(a^2+b^2),
\end{align}
where we have assumed that $d\leq 3$ in the first line. If we use the definition of the norm $\vertiii{ \cdot }_{\text{sym}}$, we get the following condition on the coercivity of the bilinear form 
\begin{align*}
& a_{h}(\wbold_{h},\wbold_{h}) \\[1.5ex]
\nonumber & \geq \left( \frac{\eta-2C_{\text{tr}}^2N_{\partial}}{1+2\eta} \right) \left(\left\| \nabla_{h}\wbold_{h}+\nabla_h \wbold_{h}^{T}-\frac{2}{3} \left( \nabla_h \cdot\wbold_{h} \right) \mathbb{I} \right\|_{\bm{L}^2 \left(\Omega \right) \times \bm{L}^2 \left(\Omega \right)}^2 + \ipbf{\frac{1}{h_{F}}\llbracket\wbold_{h}\rrbracket}{\llbracket\wbold_{h}\rrbracket}  \right) \\[1.5ex]
& \nonumber \geq \left(\frac{\eta-2C_{\text{tr}}^2N_{\partial}}{1+2\eta} \right) \vertiii{ \wbold_h }_{\text{sym}}^2. 
\end{align*}
%
%
Upon setting $C = \left(\eta-2C_{\text{tr}}^2N_{\partial} \right)/\left(1+2\eta \right)$, we obtain the desired result (see Eq.~\eqref{coercive_res}).
\end{proof}

\begin{remark}
In principle, Lemma~\ref{coercive_vis_lemma} holds for a much broader class of schemes, and is not merely limited to the mixed methods of section~\ref{gen_method_sec}. For example, it remains valid for general discontinuous Galerkin schemes. 
\end{remark}

Now, having established Lemma~\ref{coercive_vis_lemma}, we will construct a relatively straightforward corollary which establishes the coercivity of $a_h$ in a slightly stronger norm (at least for the 2D case). This result is not used in the present paper, but we provide it for the sake of completeness, and to help facilitate future analysis. 

\begin{corollary}
    Under the assumptions of Lemma~\ref{coercive_vis_lemma}, the bilinear form $a_h$ in Eq.~\eqref{diff_bilinear} is coercive on $\bm{W}_h$, such that
    \begin{align}
        \forall \wbold_h \in \bm{W}_h,\qquad a_h \left( \wbold_h, \wbold_h \right) \geq C \vertiiisup{\wbold_h}_{\text{sym}}^2, \label{coercive_res_alt}
    \end{align}
    where $C$ is a positive constant independent of $h$, and where
    \begin{align*}
        \vertiiisup{ \wbold_h }_{\text{sym}} = \left( \vertiii{ \wbold_h }_{\text{sym}}^2 + \frac{4}{9} \left(3 -d\right) \left\| \nabla_h \cdot \wbold_h \right\|_{L^2 \left(\Omega \right)}^2 \right)^{1/2},
    \end{align*}
    is a norm on $\Omega$. The present corollary is identical to Lemma~\ref{coercive_vis_lemma} when $d =3$, and differs from it when $d = 2$.
\end{corollary}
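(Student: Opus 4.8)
The plan is to peel off two claims from the corollary — that $\vertiiisup{\cdot}_{\text{sym}}$ is a norm, and that $a_h$ is coercive with respect to it — and to observe that both are essentially bookkeeping refinements of Lemma~\ref{coercive_vis_lemma} together with the first lemma of Section~\ref{Analysis_in_general}. For the norm property I would write $\vertiiisup{\wbold_h}_{\text{sym}}^2 = \vertiii{\wbold_h}_{\text{sym}}^2 + \tfrac{4}{9}(3-d)\left\|\nabla_h\cdot\wbold_h\right\|_{L^2(\Omega)}^2$, i.e. $\vertiiisup{\cdot}_{\text{sym}}$ is the $\ell^2$-combination of the norm $\vertiii{\cdot}_{\text{sym}}$ and the seminorm $\wbold_h \mapsto \tfrac{2}{3}\sqrt{3-d}\,\left\|\nabla_h\cdot\wbold_h\right\|_{L^2(\Omega)}$ (here $3-d\ge 0$ since $d\le 3$). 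Nonnegativity and positive homogeneity are immediate; positive-definiteness follows because $\vertiiisup{\wbold_h}_{\text{sym}}=0$ forces $\vertiii{\wbold_h}_{\text{sym}}=0$, hence $\wbold_h=0$ by the first lemma; and the triangle inequality follows from the triangle inequalities of the two constituent (semi)norms followed by Minkowski's inequality in $\mathbb{R}^2$. When $d=3$ the dilatational term drops out and $\vertiiisup{\cdot}_{\text{sym}}\equiv\vertiii{\cdot}_{\text{sym}}$, which is exactly why the corollary collapses to Lemma~\ref{coercive_vis_lemma} in that case.

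For coercivity I would simply re-run the estimates in the proof of Lemma~\ref{coercive_vis_lemma} up to Eq.~\eqref{coercive_ineq}, \emph{retaining} the dilatational term that was discarded there. With the abbreviations $a,b,c$ of that proof, the same trace argument yields
\[
a_h(\wbold_h,\wbold_h)\;\ge\;\tfrac{1}{2}\bigl(a^2-4C_{\text{tr}}N_{\partial}^{1/2}ab+2\eta b^2\bigr)+\tfrac{2(3-d)}{9}c^2\;\ge\;C_1\,(a^2+b^2)+\tfrac{2(3-d)}{9}c^2,
\]
where $C_1=(\eta-2C_{\text{tr}}^2N_{\partial})/(1+2\eta)$ is precisely the constant already produced in Lemma~\ref{coercive_vis_lemma} (positive by the hypothesis $\eta>2C_{\text{tr}}^2N_{\partial}$, independent of $h$). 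Since $C_1 \le \eta/(1+2\eta) < \tfrac{1}{2}$, one has $\tfrac{2(3-d)}{9}\ge\tfrac{4(3-d)}{9}C_1$ for $d\le 3$, and therefore
\[
a_h(\wbold_h,\wbold_h)\;\ge\;C_1\Bigl(a^2+b^2+\tfrac{4}{9}(3-d)c^2\Bigr)=C_1\,\vertiiisup{\wbold_h}_{\text{sym}}^2 ,
\]
using $a^2+b^2=\vertiii{\wbold_h}_{\text{sym}}^2$ and $c=\left\|\nabla_h\cdot\wbold_h\right\|_{L^2(\Omega)}$. This is Eq.~\eqref{coercive_res_alt} with $C=C_1$.

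I do not anticipate a genuine obstacle: the corollary is a strengthening of Lemma~\ref{coercive_vis_lemma} obtained purely by not throwing away a nonnegative term. The only points needing a little care are the verification that the combined quantity $\vertiiisup{\cdot}_{\text{sym}}$ satisfies the triangle inequality (the Minkowski step above) and the elementary bound $C_1<\tfrac12$, which is what allows a \emph{single} constant $C=C_1$ to dominate both the $(a^2+b^2)$-contribution and the $c^2$-contribution simultaneously.
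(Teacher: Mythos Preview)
Your proposal is correct and follows essentially the same route as the paper: retain the $\tfrac{2(3-d)}{9}c^2$ term from Eq.~\eqref{coercive_ineq}, use that $C_1=(\eta-2C_{\text{tr}}^2N_{\partial})/(1+2\eta)\in(0,\tfrac12)$ to write $\tfrac{2(3-d)}{9}c^2\ge C_1\cdot\tfrac{4(3-d)}{9}c^2$, and combine to obtain $a_h(\wbold_h,\wbold_h)\ge C_1\vertiiisup{\wbold_h}_{\text{sym}}^2$. Your explicit verification that $\vertiiisup{\cdot}_{\text{sym}}$ is a norm is a welcome addition that the paper takes for granted.
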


\begin{proof}
One may begin the proof by rewriting Eq.~\eqref{coercive_ineq} of Lemma~\ref{coercive_vis_lemma}, as follows
\begin{align*}
    a_{h}(\wbold_{h},\wbold_{h}) &\geq \frac{1}{2} ( a^2-4C_{\text{tr}}N_{\partial}^{1/2} ab+2\eta b^2) + \frac{2}{9} \left(3 -d\right) c^2 \\[1.5ex]
    & \geq  \left(\frac{\eta-2C_{\text{tr}}^2N_{\partial}}{1+2\eta} \right)(a^2+b^2) + \frac{1}{2} \cdot \frac{4}{9} \left(3 -d\right) c^2 \\[1.5ex]
    & \geq \left(\frac{\eta-2C_{\text{tr}}^2N_{\partial}}{1+2\eta} \right) \left(a^2 + b^2 + \frac{4}{9} \left(3 -d\right) c^2 \right).
\end{align*}
Here, we have used the fact that $\left(\eta-2C_{\text{tr}}^2N_{\partial} \right)/\left(1+2\eta \right)$ resides on the interval $\left(0, \frac{1}{2} \right)$ for all $\eta > 2C_{\text{tr}}^2N_{\partial}$. Upon rewriting the expression above, we find that
\begin{align*}
    a_{h}(\wbold_{h},\wbold_{h}) &\geq \left(\frac{\eta-2C_{\text{tr}}^2N_{\partial}}{1+2\eta} \right) \left( \vertiii{ \wbold_h }_{\text{sym}}^2 + \frac{4}{9} \left(3 -d\right) \left\| \nabla_h \cdot \wbold_h \right\|_{L^2 \left(\Omega \right)}^2 \right) \\[1.5ex]
    & \geq \left(\frac{\eta-2C_{\text{tr}}^2N_{\partial}}{1+2\eta} \right) \vertiiisup{ \wbold_h }_{\text{sym}}^2,
\end{align*}
where we have used the definition of $\vertiiisup{\cdot}_{\text{sym}}$. Upon setting $C = \left(\eta-2C_{\text{tr}}^2N_{\partial} \right)/\left(1+2\eta \right)$, we obtain the desired result (see Eq.~\eqref{coercive_res_alt}).
\end{proof}

\begin{lemma}[Semi-Coercivity of the Convective Trilinear Form]
Consider test functions $\bm{\beta}_h, \wbold_h \in \bm{W}_h$. Then, the trilinear form $c_h$ in Eq.~\eqref{trilinear} is semi-coercive on $\bm{W}_h$, such that
\begin{align}
    \forall \left(\bm{\beta}_h, \wbold_h \right) \in \bm{W}_h \times \bm{W}_h, \qquad c_h \left(\bm{\beta}_h; \wbold_h, \wbold_h \right) = \left| \wbold_h \right|_{\bm{\beta}_h}^2,
\end{align}
where
\begin{align}
    \left| \wbold_h \right|_{\bm{\beta}_h} = \left( \zeta \iipbf{\left| \bm{\beta}_h \cdot \nbold_F \right|\llbracket \wbold_h \rrbracket}{\llbracket \wbold_h \rrbracket} \right)^{1/2},
\end{align}
is a seminorm on $\Omega$.
\label{coercive_tri_lemma}
\end{lemma}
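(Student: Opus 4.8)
The plan is to substitute $\vbold_h = \wbold_h$ into the face-based expression for $c_h$ in Eq.~\eqref{trilinear} and to show that its first three terms cancel one another, leaving only the dissipative term, which is $\left|\wbold_h\right|_{\bm{\beta}_h}^2$ by definition. Concretely, I must show that
\begin{align*}
\ipt{\bm{\beta}_h \cdot \nabla_h \wbold_h}{\wbold_h} + \frac{1}{2} \ipt{\left(\nabla \cdot \bm{\beta}_h \right) \wbold_h}{\wbold_h} - \iipbf{ \left( \bm{\beta}_h \cdot \nbold_F \right) \llbracket \wbold_h \rrbracket}{\llcurve \wbold_h \rrcurve} = 0.
\end{align*}

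First I would treat the two volumetric terms together. Using the pointwise identity $\left( \bm{\beta}_h \cdot \nabla \wbold_h \right) \cdot \wbold_h = \tfrac{1}{2}\, \bm{\beta}_h \cdot \nabla\!\left( \left| \wbold_h \right|^2 \right)$ on each element $K$, followed by the integration-by-parts formula of Section~\ref{prelim_section} (applied with the scalar field $\left|\wbold_h\right|^2$ and the vector field $\bm{\beta}_h$), I obtain
\begin{align*}
\ipt{\bm{\beta}_h \cdot \nabla_h \wbold_h}{\wbold_h} = \frac{1}{2}\, \ipbt{\left( \bm{\beta}_h \cdot \nbold \right) \wbold_h}{\wbold_h} - \frac{1}{2}\, \ipt{\left( \nabla \cdot \bm{\beta}_h \right) \wbold_h}{\wbold_h}.
\end{align*}
Adding the second volume term cancels the divergence contribution, so the first two terms of $c_h\!\left(\bm{\beta}_h;\wbold_h,\wbold_h\right)$ collapse to the single element-boundary term $\tfrac{1}{2}\,\ipbt{\left(\bm{\beta}_h\cdot\nbold\right)\wbold_h}{\wbold_h}$.

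Next I would convert this element-boundary sum into a sum over interior faces. Because $\bm{\beta}_h \in \bm{H}_0\!\left(\text{div};\Omega\right)$, the normal trace $\bm{\beta}_h\cdot\nbold$ is single-valued across each interior face and vanishes on $\partial\Omega$, so the boundary faces contribute nothing. On an interior face $F$ shared by $K_+$ and $K_-$, I would combine the two element contributions and use $\nbold_- = -\nbold_+ = -\nbold_F$ together with the algebraic identity $\left|\wbold_{h,+}\right|^2 - \left|\wbold_{h,-}\right|^2 = 2\,\llcurve\wbold_h\rrcurve\cdot\llbracket\wbold_h\rrbracket$; this reduces the term to $\iipbf{\left(\bm{\beta}_h\cdot\nbold_F\right)\llbracket\wbold_h\rrbracket}{\llcurve\wbold_h\rrcurve}$, which exactly cancels the third term of $c_h\!\left(\bm{\beta}_h;\wbold_h,\wbold_h\right)$. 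Hence $c_h\!\left(\bm{\beta}_h;\wbold_h,\wbold_h\right) = \zeta\,\iipbf{\left|\bm{\beta}_h\cdot\nbold_F\right|\llbracket\wbold_h\rrbracket}{\llbracket\wbold_h\rrbracket} = \left|\wbold_h\right|_{\bm{\beta}_h}^2$, as claimed.

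Finally, to check that $\left|\cdot\right|_{\bm{\beta}_h}$ is a seminorm (assuming $\zeta\ge 0$): nonnegativity is immediate since the integrand $\left|\bm{\beta}_h\cdot\nbold_F\right|\left|\llbracket\wbold_h\rrbracket\right|^2$ is pointwise nonnegative; absolute homogeneity is clear from the quadratic dependence on $\wbold_h$; and the triangle inequality follows because $\wbold_h \mapsto \left|\wbold_h\right|_{\bm{\beta}_h}^2$ is a positive semidefinite quadratic form — equivalently, apply Minkowski's inequality in the weighted $L^2$ space over $\bigcup_{F\in\mathcal{F}_h^i}F$ with weight $\left|\bm{\beta}_h\cdot\nbold_F\right|$. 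It is not a norm in general, since $\left|\wbold_h\right|_{\bm{\beta}_h}$ vanishes whenever $\wbold_h$ is continuous across all interior faces (or whenever $\bm{\beta}_h\cdot\nbold_F \equiv 0$) without $\wbold_h$ itself vanishing. The computation is essentially routine; the only delicate point is the bookkeeping in the third step — correctly matching the orientation conventions for $\nbold_\pm$, $\nbold_F$, and the jump/average operators when assembling the per-element contributions into the interior-face form, and confirming that the boundary faces drop out by the $\bm{H}_0\!\left(\text{div}\right)$ condition.
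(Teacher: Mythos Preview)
Your proof is correct and follows essentially the same route as the paper: the paper substitutes $\vbold_h=\wbold_h$ and then invokes an auxiliary identity (stated separately in~\ref{jump_lemma}) asserting that
\[
\ipt{\bm{\beta}_h \cdot \nabla_h \wbold_h}{\wbold_h} + \tfrac{1}{2} \ipt{\left(\nabla \cdot \bm{\beta}_h \right) \wbold_h}{\wbold_h} = \iipbf{ \left(\bm{\beta}_h \cdot \nbold_F \right) \llbracket \wbold_h \rrbracket}{\llcurve \wbold_h \rrcurve},
\]
whose proof is exactly your integration-by-parts/jump-average computation; you simply inline that argument rather than factoring it out, and you additionally spell out the (routine) seminorm verification that the paper leaves implicit.
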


\begin{proof}
We begin by substituting $\vbold_h = \wbold_h$ into Eq.~\eqref{trilinear} in order to obtain the following
\begin{align}
c_h \left(\bm{\beta}_h; \wbold_h, \wbold_h \right) &= \ipt{\bm{\beta}_h \cdot \nabla_h \wbold_h}{\wbold_h} + \frac{1}{2} \ipt{\left(\nabla_h \cdot \bm{\beta}_h \right) \wbold_h}{\wbold_h} \label{conv_coercive_one}\\[1.5ex]
& \nonumber - \iipbf{ \left(\bm{\beta}_h \cdot \nbold_F \right) \llbracket \wbold_h \rrbracket}{\llcurve \wbold_h \rrcurve} + \zeta \iipbf{\left| \bm{\beta}_h \cdot \nbold_F \right|\llbracket \wbold_h \rrbracket}{\llbracket \wbold_h \rrbracket}.
\end{align}
Next, we can substitute Eq.~\eqref{temam_id_spec} from the lemma in~\ref{jump_lemma} into Eq.~\eqref{conv_coercive_one}
\begin{align}
c_h \left(\bm{\beta}_h; \wbold_h, \wbold_h \right) &= \zeta \iipbf{\left| \bm{\beta}_h \cdot \nbold_F \right|\llbracket \wbold_h \rrbracket}{\llbracket \wbold_h \rrbracket}. \label{conv_coercive_three}
\end{align}
The proof of semi-coercivity is completed by substituting the definition of the appropriate seminorm into the RHS of Eq.~\eqref{conv_coercive_three}.
\end{proof}

\begin{remark}
    One should note that Lemma~\ref{coercive_tri_lemma} can be generalized to non-H(div) conforming methods, such as discontinuous Galerkin methods. This can be achieved by replacing the boundary terms in $c_h$ with 
    \begin{align*}
       &- \iipbf{ \left( \llcurve \bm{\beta}_h \rrcurve \cdot \nbold_F \right) \llbracket \vbold_h \rrbracket}{\llcurve \wbold_h \rrcurve} + \zeta \iipbf{\left| \llcurve \bm{\beta}_h \rrcurve \cdot \nbold_F \right|\llbracket \vbold_h \rrbracket}{\llbracket \wbold_h \rrbracket} \\[1.5ex]
       & - \frac{1}{2} \ipbf{\llbracket \bm{\beta}_h \rrbracket \cdot \nbold_F}{\llcurve \vbold_h \cdot \wbold_h \rrcurve}.
    \end{align*}
    The corresponding proof of semi-coercivity is discussed in~\cite{DiPietro11}, p.~272.
\end{remark}


\section{Stability: General Case}\label{stability}

\begin{theorem}[Kinetic Energy Estimate] Consider a forcing function $\widetilde{\bm{f}} \in L^1 \left(t_0, t_n, \bm{L}^2 \left(\Omega \right)\right)$ and an initial condition $\ubold_h \left(t_0 \right) \in \bm{W}_h \subset \bm{H}_{0}(\text{div};\Omega)$. Subject to these assumptions, the discrete kinetic energy of the general mixed methods (formulated in Eqs.~\eqref{mass_cons_disc} and \eqref{moment_cons_disc}) is governed by the following equation at time $t_n \geq t_0$
\begin{align}
        &\frac{1}{2} \left\| \ubold_h \left(t_n \right) \right\|_{\bm{L}^2 \left(\Omega\right)}^2 + \int_{t_0}^{t_n} \left( \left| \ubold_h \left(s \right) \right|_{\ubold_h}^2 +\nu_h \, C \left\| \ubold_h \left(s \right) \right\|_{\text{sym}}^2  \right) ds \label{kinetic_res} \\[1.5ex]
        \nonumber & \leq  \left\| \ubold_h \left(t_0 \right) \right\|_{\bm{L}^2 \left(\Omega\right)}^2 + \frac{3}{2} \left\| \widetilde{\bm{f}} \right\|_{L^1 \left(t_0, t_n, \bm{L}^2 \left(\Omega\right) \right)}^2, 
\end{align}
where 
\begin{align*}
    \left\| \bm{f} \right\|_{L^1 \left(t_0, t_n, \bm{L}^2 \left(\Omega\right) \right)} = \int_{t_0}^{t_n} \left\| \bm{f} \left(s\right) \right\|_{\bm{L}^2 \left(\Omega\right)} ds,
\end{align*}
is a space-time norm on $\left(t_0, t_n\right) \times \Omega$.

\end{theorem}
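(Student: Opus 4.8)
The plan is to test the discrete equations with the solution itself, reduce everything to a pointwise-in-time energy inequality using the two coercivity lemmas, and then close the estimate with a nonlinear Gr\"onwall argument adapted to the $L^1$-in-time regularity of the forcing.

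First I would take $q_h = \widetilde{p}_h$ in Eq.~\eqref{mass_cons_disc} and $\wbold_h = \ubold_h$ in Eq.~\eqref{moment_cons_disc} (equivalently, work directly with the compact form~\eqref{incomp_form_one}). The mass equation gives $b_h(\ubold_h,\widetilde{p}_h)=0$, so the two pressure contributions cancel identically. The transient term yields $\ipt{\partial_t \ubold_h}{\ubold_h} = \tfrac12 \tfrac{d}{dt}\|\ubold_h\|_{\bm{L}^2(\Omega)}^2$; the convective term is handled by Lemma~\ref{coercive_tri_lemma}, which gives $c_h(\ubold_h;\ubold_h,\ubold_h) = |\ubold_h|_{\ubold_h}^2 \ge 0$; and the viscous term is handled by Lemma~\ref{coercive_vis_lemma} (valid under the hypothesis $\eta > 2C_{\text{tr}}^2 N_\partial$), which gives $\nu_h a_h(\ubold_h,\ubold_h) \ge \nu_h C \vertiii{\ubold_h}_{\text{sym}}^2$. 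Bounding the forcing by Cauchy--Schwarz, $\ipt{\widetilde{\bm{f}}}{\ubold_h} \le \|\widetilde{\bm{f}}\|_{\bm{L}^2(\Omega)}\|\ubold_h\|_{\bm{L}^2(\Omega)}$, I arrive at
\[
\tfrac12 \tfrac{d}{dt}\|\ubold_h\|_{\bm{L}^2(\Omega)}^2 + |\ubold_h|_{\ubold_h}^2 + \nu_h C \vertiii{\ubold_h}_{\text{sym}}^2 \le \|\widetilde{\bm{f}}\|_{\bm{L}^2(\Omega)}\|\ubold_h\|_{\bm{L}^2(\Omega)}.
\]
Integrating over $(t_0,t)$ for any $t\le t_n$ and discarding the nonnegative dissipation integrals yields $\tfrac12\|\ubold_h(t)\|^2_{\bm L^2(\Omega)} \le \tfrac12\|\ubold_h(t_0)\|^2_{\bm L^2(\Omega)} + \int_{t_0}^t \|\widetilde{\bm{f}}(s)\|_{\bm L^2(\Omega)}\|\ubold_h(s)\|_{\bm L^2(\Omega)}\,ds$.

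The crux is converting this into an $L^\infty$-in-time bound on $\|\ubold_h\|_{\bm L^2(\Omega)}$ despite the forcing being only $L^1$ in time. Setting $E(t) = \tfrac12\|\ubold_h(t)\|_{\bm L^2(\Omega)}^2$, the last inequality reads $E(t) \le E(t_0) + \sqrt2\int_{t_0}^t \|\widetilde{\bm{f}}(s)\|_{\bm L^2(\Omega)}\sqrt{E(s)}\,ds$, and the nonlinear Gr\"onwall lemma for inequalities of the type $\phi \le c + \int \psi\sqrt{\phi}$ gives $\sqrt{E(t)} \le \sqrt{E(t_0)} + \tfrac{1}{\sqrt2}\int_{t_0}^t\|\widetilde{\bm{f}}(s)\|_{\bm L^2(\Omega)}\,ds$, i.e. $\|\ubold_h(t)\|_{\bm L^2(\Omega)} \le \|\ubold_h(t_0)\|_{\bm L^2(\Omega)} + \|\widetilde{\bm{f}}\|_{L^1(t_0,t,\bm L^2(\Omega))}$. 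Taking the supremum over $t\in[t_0,t_n]$ and using that $\|\widetilde{\bm{f}}\|_{L^1(t_0,\cdot,\bm L^2(\Omega))}$ is nondecreasing bounds $\|\ubold_h\|_{L^\infty(t_0,t_n,\bm L^2(\Omega))}$ by $\|\ubold_h(t_0)\|_{\bm L^2(\Omega)} + \|\widetilde{\bm{f}}\|_{L^1(t_0,t_n,\bm L^2(\Omega))}$.

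Finally, I would return to the time-integrated inequality evaluated at $t=t_n$, this time retaining the dissipation terms, and estimate its right-hand side by $\|\ubold_h\|_{L^\infty(t_0,t_n,\bm L^2(\Omega))}\|\widetilde{\bm{f}}\|_{L^1(t_0,t_n,\bm L^2(\Omega))} \le (\|\ubold_h(t_0)\|_{\bm L^2(\Omega)} + \|\widetilde{\bm{f}}\|_{L^1(t_0,t_n,\bm L^2(\Omega))})\|\widetilde{\bm{f}}\|_{L^1(t_0,t_n,\bm L^2(\Omega))}$. Applying Young's inequality to the cross term, $\|\ubold_h(t_0)\|_{\bm L^2(\Omega)}\,\|\widetilde{\bm{f}}\|_{L^1} \le \tfrac12\|\ubold_h(t_0)\|_{\bm L^2(\Omega)}^2 + \tfrac12\|\widetilde{\bm{f}}\|_{L^1}^2$, and adding back the $\tfrac12\|\ubold_h(t_0)\|_{\bm L^2(\Omega)}^2$ already present produces exactly $\|\ubold_h(t_0)\|_{\bm L^2(\Omega)}^2 + \tfrac32\|\widetilde{\bm{f}}\|_{L^1(t_0,t_n,\bm L^2(\Omega))}^2$, which is Eq.~\eqref{kinetic_res}. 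I expect the main obstacle to be the nonlinear Gr\"onwall step (and, relatedly, the absolute continuity of $t\mapsto\|\ubold_h(t)\|_{\bm L^2(\Omega)}^2$ needed to integrate the energy identity), together with keeping careful track of the hypothesis $\eta > 2C_{\text{tr}}^2 N_\partial$ inherited from Lemma~\ref{coercive_vis_lemma}; the remaining manipulations are routine bookkeeping.
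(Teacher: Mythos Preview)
Your proposal is correct and follows the same route as the paper: test with the solution, eliminate the pressure via $b_h(\ubold_h,\widetilde p_h)=0$, invoke Lemmas~\ref{coercive_vis_lemma} and~\ref{coercive_tri_lemma}, obtain the $L^\infty$-in-time bound $\|\ubold_h(t)\|_{\bm L^2(\Omega)}\le\|\ubold_h(t_0)\|_{\bm L^2(\Omega)}+\|\widetilde{\bm f}\|_{L^1(t_0,t,\bm L^2(\Omega))}$, and close with Young's inequality. The only cosmetic difference is in that $L^\infty$ step: the paper writes $\tfrac12\tfrac{d}{dt}\|\ubold_h\|_{\bm L^2(\Omega)}^2=\|\ubold_h\|_{\bm L^2(\Omega)}\tfrac{d}{dt}\|\ubold_h\|_{\bm L^2(\Omega)}$, divides by $\|\ubold_h\|_{\bm L^2(\Omega)}$, and integrates directly, whereas you package the same manipulation as a nonlinear Gr\"onwall/Bihari inequality---the two arguments are equivalent.
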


\begin{proof}
We begin by substituting $q_h = \widetilde{p}_h$ and $\wbold_h = \ubold_h$ into Eq.~\eqref{incomp_form_one}, in order to obtain
\begin{align*}
& b_h \left(\ubold_h, \widetilde{p}_h \right) = 0,  \\[1.5ex]
\nonumber & \ipt{\partial_t \, \ubold_h}{\ubold_h} + c_h \left(\ubold_h; \ubold_h, \ubold_h \right) +\nu_h a_h \left(\ubold_h, \ubold_h \right)  - b_h \left( \ubold_h, \widetilde{p}_h \right) = \ipt{\widetilde{\bm{f}}}{\ubold_h}.  
\end{align*}
Upon summing these equations together we obtain
\begin{align*}
\ipt{\partial_t \, \ubold_h}{\ubold_h} + c_h \left(\ubold_h; \ubold_h, \ubold_h \right) +\nu_h a_h \left(\ubold_h, \ubold_h \right) = \ipt{\widetilde{\bm{f}}}{\ubold_h}, 
\end{align*}
or equivalently
\begin{align*}
   \frac{1}{2} \frac{d}{dt} \left\| \ubold_h \right\|_{\bm{L}^2 \left(\Omega\right)}^2 + c_h \left(\ubold_h; \ubold_h, \ubold_h \right) +\nu_h a_h \left(\ubold_h, \ubold_h \right) = \ipt{\widetilde{\bm{f}}}{\ubold_h}.
\end{align*}
Next, we invoke the coercivity of $a_h$ (Lemma~\ref{coercive_vis_lemma}) and the semi-coercivity of $c_h$ (Lemma~\ref{coercive_tri_lemma}) in order to obtain
\begin{align}
    \frac{1}{2} \frac{d}{dt} \left\| \ubold_h \right\|_{\bm{L}^2 \left(\Omega\right)}^2 + \left| \ubold_h \right|_{\ubold_h}^2 +\nu_h \, C \left\| \ubold_h \right\|_{\text{sym}}^2  \leq \ipt{\widetilde{\bm{f}}}{\ubold_h}. \label{kbound_two}
\end{align}
In accordance with the approach of~\cite{Dallmann15}, we examine Eq.~\eqref{kbound_two} and note that 
\begin{align*}
       \frac{1}{2} \frac{d}{dt} \left\| \ubold_h \right\|_{\bm{L}^2 \left(\Omega\right)}^2  \leq \ipt{\widetilde{\bm{f}}}{\ubold_h}, 
\end{align*}
and by the Cauchy-Schwarz inequality
\begin{align}
    \nonumber \left\| \ubold_h \right\|_{\bm{L}^2 \left(\Omega\right)} \frac{d}{dt} \left\| \ubold_h \right\|_{\bm{L}^2 \left(\Omega\right)} & \leq \left\| \widetilde{\bm{f}} \right\|_{\bm{L}^2 \left(\Omega\right)} \left\| \ubold_h \right\|_{\bm{L}^2 \left(\Omega\right)} \\[1.5ex]
    \frac{d}{dt} \left\| \ubold_h \right\|_{\bm{L}^2 \left(\Omega\right)} & \leq \left\| \widetilde{\bm{f}} \right\|_{\bm{L}^2 \left(\Omega\right)}. \label{kbound_four}
\end{align}
Now, we integrate Eq.~\eqref{kbound_four} from $t = t_0$ to $t = t_n$ in order to obtain
\begin{align*}
    \left\| \ubold_h \left(t_n \right) \right\|_{\bm{L}^2 \left(\Omega\right)} -     \left\| \ubold_h \left(t_0 \right) \right\|_{\bm{L}^2 \left(\Omega\right)} \leq \int_{t_0}^{t_n} \left\| \widetilde{\bm{f}} \left( s\right) \right\|_{\bm{L}^2 \left(\Omega\right)} ds,
\end{align*}
or equivalently
\begin{align}
    \left\| \ubold_h \left(t_n \right) \right\|_{\bm{L}^2 \left(\Omega\right)} \leq \left\| \ubold_h \left(t_0 \right) \right\|_{\bm{L}^2 \left(\Omega\right)} + \left\| \widetilde{\bm{f}} \right\|_{L^1 \left(t_0, t_n, \bm{L}^2 \left(\Omega\right) \right)}. \label{kbound_five}
\end{align}
%
%
Setting aside Eq.~\eqref{kbound_five} for the moment, we return our attention to Eq.~\eqref{kbound_two}. Upon integrating Eq.~\eqref{kbound_two} from $t = t_0$ to $t = t_n$ we obtain
\begin{align}
    & \frac{1}{2} \left\| \ubold_h \left(t_n \right) \right\|_{\bm{L}^2 \left(\Omega\right)}^2 + \int_{t_0}^{t_n} \left( \left| \ubold_h \left(s \right) \right|_{\ubold_h}^2 +\nu_h \, C \left\| \ubold_h \left(s \right) \right\|_{\text{sym}}^2  \right) ds \label{kbound_six} \\[1.5ex]
    \nonumber & \leq \frac{1}{2} \left\| \ubold_h \left(t_0 \right) \right\|_{\bm{L}^2 \left(\Omega\right)}^2 + \int_{t_0}^{t_n} \ipt{\widetilde{\bm{f}} \left(s \right) }{\ubold_h \left(s \right)} ds. 
\end{align}
The last term on the RHS of Eq.~\eqref{kbound_six} can be rewritten by applying the Cauchy-Schwarz inequality, Eq.~\eqref{kbound_five}, and Young's inequality as follows
\begin{align}
\nonumber  \int_{t_0}^{t_n} \ipt{\widetilde{\bm{f}} \left(s \right) }{\ubold_h \left(s \right)} ds & \leq \int_{t_0}^{t_n} \left[ \left\| \widetilde{\bm{f}} \left(s \right) \right\|_{\bm{L}^2 \left(\Omega\right)} \left\| \ubold_h \left(s \right) \right\|_{\bm{L}^2 \left(\Omega\right)} \right] ds  \\[1.5ex]
\nonumber  & \leq \int_{t_0}^{t_n} \left[ \left\| \widetilde{\bm{f}} \left(s \right) \right\|_{\bm{L}^2 \left(\Omega\right)} \left( \left\| \ubold_h \left(t_0 \right) \right\|_{\bm{L}^2 \left(\Omega\right)} + \left\| \widetilde{\bm{f}} \right\|_{L^1 \left(t_0, s, \bm{L}^2 \left(\Omega\right) \right)} \right) \right] ds \\[1.5ex]
\nonumber & \leq  \int_{t_0}^{t_n} \left\| \widetilde{\bm{f}} \left(s \right) \right\|_{\bm{L}^2 \left(\Omega\right)} ds  \left( \left\| \ubold_h \left(t_0 \right) \right\|_{\bm{L}^2 \left(\Omega\right)} + \left\| \widetilde{\bm{f}} \right\|_{L^1 \left(t_0, t_n, \bm{L}^2 \left(\Omega\right) \right)} \right) \\[1.5ex]
\nonumber & = \left\| \widetilde{\bm{f}} \right\|_{L^1 \left(t_0, t_n, \bm{L}^2 \left(\Omega\right) \right)} \left( \left\| \ubold_h \left(t_0 \right) \right\|_{\bm{L}^2 \left(\Omega\right)} + \left\| \widetilde{\bm{f}} \right\|_{L^1 \left(t_0, t_n, \bm{L}^2 \left(\Omega\right) \right)} \right) \\[1.5ex]
& \leq \frac{1}{2} \left\| \ubold_h \left(t_0 \right) \right\|_{\bm{L}^2 \left(\Omega\right)}^2 + \frac{3}{2} \left\| \widetilde{\bm{f}} \right\|_{L^1 \left(t_0, t_n, \bm{L}^2 \left(\Omega\right) \right)}^2. \label{kbound_seven}
\end{align}
Upon combining Eqs.~\eqref{kbound_seven} and \eqref{kbound_six}, we obtain the desired bound on the discrete kinetic energy, (see Eq.~\eqref{kinetic_res}).
\end{proof}

\section{Numerical Experiments}\label{numerical_simulation}
In this section, we will present some numerical simulations to demonstrate the performance of the newly proposed, versatile mixed methods. We considered four mixed methods: i) a Taylor-Hood method based on the symmetric tensor formulation in Eqs.~\eqref{mass_cons_disc} and \eqref{moment_cons_disc}, with continuous Lagrangian elements for the velocity and pressure (denoted by TH-Symmetric); ii) a Taylor-Hood method based on the classical, non-symmetric tensor formulation (denoted by TH-Non-Symmetric), iii) a pointwise divergence-free, H(div)-conforming method based on the symmetric tensor formulation in Eqs.~\eqref{mass_cons_disc_div} and \eqref{moment_cons_disc_div},  with Brezzi-Douglas-Marini elements for the velocity and discontinuous Lagrangian elements for the pressure  (denoted by BDM-Symmetric); and iv) a pointwise divergence-free, H(div)-conforming, BDM method based on the classical, non-symmetric tensor formulation (denoted by BDM-Non-Symmetric) which was reviewed by~\cite{schroeder2018towards}. In each case, we considered polynomial degree $k \in\{1,2,3\}$, (omitting the case of $k=0$), and we imposed a zero integral mean condition for the pressure via a Lagrange multiplier. In addition, the computational domains were meshed with squares, and each square was split into two congruent triangles. The high-order BDF3 scheme was used for the time discretization.  The numerical results presented below were obtained using FEniCS, which is an open-source software package for solving partial differential equations, (see~\cite{logg2012automated,alnaes2015fenics} for details). 

For the first example, we follow \cite{guzman2016h,schroeder2018divergence} and consider the 2D Taylor-Green vortex case with periodic boundary conditions on all sides. The exact solution is given by
\begin{align}
 \nonumber   \ubold(t,\xbold)&=\bigg(\sin(x_{1})\cos(x_{2})e^{-2\nu t},-\cos(x_{1})\sin(x_{2})e^{-2\nu t}\bigg), \\[1.5ex]
    \nonumber \widetilde{p}(t,\xbold)&=\frac{1}{4}\bigg(\cos(2x_{1})+\cos(2x_{2})\bigg)e^{-4\nu t},
\end{align}
with domain $\Omega:=[0,2\pi]^2$, $\xbold:=(x_1,x_2)\in\Omega$, $t\in[0,1]$, and viscosity $\nu=0.01$. The Taylor-Green vortex can be simulated for much longer times $t \gg 1$ (see~\cite{schroeder2018divergence}), but our objective was to limit the total runtime in order to help control the temporal errors. This was necessary for calculating the spatial orders of accuracy.  In each of our simulations, we computed the approximations of $\ubold$ and $\widetilde{p}$ at $t=1.0$ s, with time-step $\Delta t=0.01$ s, on four uniform meshes ($h = 0.8886$, 0.4443, 0.2221, and 0.1777). For the TH-Symmetric and TH-Non-Symmetric schemes, we performed tests with and without a stabilization term of the form
\begin{align*}
\delta \ipt{|\ubold_h| \nabla\cdot\ubold_h}{\nabla\cdot\wbold_h}.
\end{align*}
For the tests with non-zero stabilization, we set $\delta = 10$. For the BDM-Symmetric and BDM-Non-Symmetric schemes, we computed the numerical fluxes with $\zeta=0.5$ for an upwind biased convective flux, $\zeta=0$ for a central unbiased convective flux, and $\eta=3(k+1)(k+2)$ for a viscous flux with extra dissipation that scales with $k^2$.

In table \ref{table_TH}, we present the results of the TH-Symmetric method with and without stabilization. We observe that, in almost all cases, the velocity and pressure errors converge at the expected rates. However, when $k = 1$, we obtain super-convergence of the velocity for the TH-Symmetric method without the stabilization term. This is an exceptional case, and as expected, the method recovers the standard rates of convergence for $k = 2$ and 3. The TH-Symmetric method with stabilization does not demonstrate superconvergence (even for $k=1$), but instead achieves the expected rate of convergence in all cases. Overall, despite the superconvergence results for $k =1$, the TH-Symmetric method with stabilization tends to outperform the method without stabilization, as it produces lower absolute values of error in most cases. 

In table \ref{table_TH_nonsymmetric}, we present the results of the TH-Non-Symmetric method with and without stabilization. The results are very similar to those of the TH-Symmetric method in table~\ref{table_TH}. Interestingly enough, we observe slightly lower absolute errors for both velocity and pressure with the symmetric formulation, especially when the stabilization term is omitted ($\delta=0$).  

In table \ref{table_our_up_cen}, we present a comparison of the upwind and central versions of the BDM-Symmetric method. For both versions of the method, we observe the expected convergence rates for velocity and pressure. We obtain similar results, upon comparing the upwind and central versions of the BDM-Non-Symmetric method in table~\ref{table_our_up_comparison}. 

Now, when we compare tables \ref{table_our_up_cen} and \ref{table_our_up_comparison} to one another, we see that the BDM-Symmetric and BDM-Non-Symmetric methods achieve roughly the same absolute levels of accuracy. Of course, this holds with the caveat that the BDM-Symmetric method is significantly more versatile than the BDM-Non-Symmetric method (as previously discussed). Finally, when we compare tables~\ref{table_TH} and \ref{table_TH_nonsymmetric} with tables \ref{table_our_up_cen} and \ref{table_our_up_comparison}, we observe that the pointwise divergence-free, H(div)-conforming methods behave better than the Taylor-Hood methods. This is not a surprise, as the latter methods are neither pointwise divergence-free, nor pressure-robust.

We conclude our discussion of example 1, by examining the qualitative behaviour of the vorticity and pressure for the BDM-Symmetric method as illustrated in figures~\ref{fig:vorticity_k1}--\ref{fig:pressure_k3}. Here, we observe that the approximations of vorticity and pressure become significantly more accurate when $k$ increases, and similarly, when $h$ decreases.



In example 2, we compare the behavior of the upwind and central versions of the BDM-Symmetric method on the Gresho-vortex problem (see \cite{schroeder2017pressure}). This problem is defined on a domain $\Omega=(-0.5,0.5)^2$.
The initial state of the fluid is described by 
\begin{align*}
\ubold_{\phi}(r,\phi) &=\begin{cases}
  5r   &\text{$0\leq r\leq 0.2$}  \\[1.5ex]   
  2-5r &\text{$0\leq r\leq 0.4$}  \\[1.5ex] 
  0   &\text{$0.4\leq r$} 
\end{cases} \\[1.5ex]
\ubold_{r}(r,\phi) &=0.
\end{align*}
Note here that polar coordinates $\left(r, \phi \right)$ are used. For this problem, we set $\nu=5\times 10^{-6}$, and performed simulations with $k \in \left\{1, 2 ,3 \right\}$ and $h=0.0354$. 
The total simulation time was set to be 14.0 s and the time-step was $\Delta t = 0.01$ s. 
Velocity magnitude contours for this example are shown in figures~\ref{fig:v_t4_k1}--\ref{fig:v_t4_k3}. We observe that, in general, the upwind flux behaves better than the central flux, especially when $k=1$, which implies the superiority of the upwind flux compared with the central flux when one considers convection-dominated flows.

%
\begin{center}
\makeatletter\def\@captype{table}\makeatother
\resizebox{\textwidth}{33mm}{
\begin{tabular}{|p{0.1cm}|c|c|l|l|l|l|}
\hline
\multicolumn{1}{|c|}{\multirow{3}{*}{$k$}} & \multirow{3}{*}{$h$} & \multirow{3}{*}{d.o.f} & \multicolumn{2}{c|}{$\delta=0$}                                                                                                     & \multicolumn{2}{c|}{$\delta=10$}                                                                                                   \\ \cline{4-7} 
\multicolumn{1}{|c|}{}                   &                    &                        & \begin{tabular}[c]{@{}l@{}}$\|\ubold-\ubold_{h}\|_{\bm{L}^{2}(\Omega)}$ \\ error \hspace{0.65cm} order\end{tabular} & \begin{tabular}[c]{@{}l@{}}$\|\widetilde{p}-\widetilde{p}_{h}\|_{L^{2}(\Omega)}$\\ error \hspace{0.65cm} order\end{tabular} & \begin{tabular}[c]{@{}l@{}}$\|\ubold-\ubold_{h}\|_{\bm{L}^{2}(\Omega)}$\\ error \hspace{0.65cm} order\end{tabular} & \begin{tabular}[c]{@{}l@{}}$\|\widetilde{p}-\widetilde{p}_{h}\|_{L^{2}(\Omega)}$\\ error \hspace{0.65cm} order\end{tabular} \\ \hline
\multirow{4}{*}{1} & 0.8886 & 901 & 2.85e-1   \quad    -----                   & 1.51e-1  \quad  ----- & 1.53e-1  \quad  -----& 1.07e-1  \quad  -----         \\ \cline{2-7} 
& 0.4443 & 3601 & 2.54e-2   \quad 3.49  & 2.36e-2  \quad  2.68                  & 1.87e-2  \quad  3.04 & 2.34e-2  \quad    2.20                                                  \\ \cline{2-7} 
& 0.2221 & 14401 & 1.54e-3  \quad  4.05                                     
& 5.62e-3  \quad    2.07 & 2.52e-3  \quad  2.89                                 & 5.64e-3  \quad  2.05                                                  \\ \cline{2-7} 
& 0.1777 & 22501 & 6.39e-4  \quad 3.93  & 3.58e-3  \quad  2.02                  & 1.26e-3  \quad  3.10  & 3.59e-3  \quad  2.03                                                  \\ \hline
\multirow{4}{*}{2} & 0.8886 & 2201 & 4.96e-2    \quad    -----                  & 2.57e-2  \quad  ----- & 1.18e-2  \quad  ----- & 1.42e-2  \quad  -----         \\ \cline{2-7} 
& 0.4443 & 8801 & 5.68e-3  \quad  3.13 & 3.33e-3  \quad  2.95                  & 8.71e-4  \quad  3.75 & 2.05e-3  \quad   2.80                                                  \\ \cline{2-7} 
& 0.2221 & 35201 & 3.79e-4  \quad 3.90 & 3.53e-4  \quad    3.24      
& 4.70e-5  \quad  4.21  & 2.66e-4  \quad  2.94                                                  \\ \cline{2-7} 
& 0.1777 & 55001 & 1.54e-4  \quad  4.05                                     
& 1.69e-4  \quad    3.30    & 1.78e-5  \quad  4.35                              & 1.37e-4  \quad  2.98                                                  \\ \hline
\multirow{4}{*}{3} & 0.8886 & 4101 & 1.22e-3    \quad    ----- 
& 8.98e-4  \quad  ----- & 1.94e-4  \quad  -----                                 & 8.05e-4  \quad  -----                                                 \\ \cline{2-7} 
& 0.4443 & 16401 & 2.33e-5  \quad 5.71  & 4.66e-5  \quad  4.27                  & 4.94e-6  \quad  5.30  & 4.63e-5  \quad  4.12                                                  \\ \cline{2-7} 
& 0.2221 & 65601 & 6.60e-7  \quad      5.14     & 2.84e-6  \quad  4.04          & 1.44e-7  \quad  5.10  & 2.83e-6 \quad   4.03                                                  \\ \cline{2-7} 
& 0.1777 & 102501 & 2.16e-7     \quad   5.00 & 1.16e-6  \quad  4.01             & 4.69e-8  \quad  5.04  & 1.16e-6  \quad  4.01                                                  \\ \hline
\end{tabular}}
\caption{A comparison of the TH-Symmetric method without stabilization ($\delta=0$), to the same method with stabilization ($\delta=10$) at $t = 1.0$. The stabilization term takes the form $\delta|\ubold_h| \left( \nabla\cdot\ubold_h \right) \left(\nabla\cdot\wbold_h \right)$.}
\label{table_TH}
\end{center}

\begin{center}
\makeatletter\def\@captype{table}\makeatother
\resizebox{\textwidth}{33mm}{
\begin{tabular}{|p{0.1cm}|c|c|l|l|l|l|}
\hline
\multicolumn{1}{|c|}{\multirow{3}{*}{$k$}} & \multirow{3}{*}{$h$} & \multirow{3}{*}{d.o.f} & \multicolumn{2}{c|}{$\delta=0$}                                                                                                     & \multicolumn{2}{c|}{$\delta=10$}                                                                                                   \\ \cline{4-7} 
\multicolumn{1}{|c|}{}                   &                    &                        & \begin{tabular}[c]{@{}l@{}}$\|\ubold-\ubold_{h}\|_{\bm{L}^{2}(\Omega)}$ \\ error \hspace{0.65cm} order\end{tabular} & \begin{tabular}[c]{@{}l@{}}$\|\widetilde{p}-\widetilde{p}_{h}\|_{L^{2}(\Omega)}$\\ error \hspace{0.65cm} order\end{tabular} & \begin{tabular}[c]{@{}l@{}}$\|\ubold-\ubold_{h}\|_{\bm{L}^{2}(\Omega)}$\\ error \hspace{0.65cm} order\end{tabular} & \begin{tabular}[c]{@{}l@{}}$\|\widetilde{p}-\widetilde{p}_{h}\|_{L^{2}(\Omega)}$\\ error \hspace{0.65cm} order\end{tabular} \\ \hline
\multirow{4}{*}{1} & 0.8886 & 901 & 3.09e-1   \quad    -----                   & 1.58e-1  \quad  ----- & 1.53e-1  \quad  -----& 1.07e-1  \quad  -----         \\ \cline{2-7} 
& 0.4443 & 3601 & 3.21e-2   \quad 3.27  & 2.40e-2  \quad  2.72                  & 1.87e-2  \quad  3.04 & 2.34e-2  \quad    2.20                                                  \\ \cline{2-7} 
& 0.2221 & 14401 & 1.98e-3  \quad  4.02                                     
& 5.62e-3  \quad    2.09 & 2.52e-3  \quad  2.89                                 & 5.64e-3  \quad  2.05                                                  \\ \cline{2-7} 
& 0.1777 & 22501 & 8.26e-4  \quad 3.91  & 3.58e-3  \quad  2.02                  & 1.26e-3  \quad  3.10  & 3.59e-3  \quad  2.03                                                  \\ \hline
\multirow{4}{*}{2} & 0.8886 & 2201 & 5.82e-2    \quad    -----                  & 2.91e-2  \quad  ----- & 1.18e-2  \quad  ----- & 1.42e-2  \quad  -----         \\ \cline{2-7} 
& 0.4443 & 8801 & 7.13e-3  \quad  3.03 & 3.85e-3  \quad  2.92                  & 8.70e-4  \quad  3.75 & 2.05e-3  \quad   2.80                                                  \\ \cline{2-7} 
& 0.2221 & 35201 & 4.98e-4  \quad 3.84 & 4.01e-4  \quad    3.26      
& 4.70e-5  \quad  4.21  & 2.66e-4  \quad  2.94                                                  \\ \cline{2-7} 
& 0.1777 & 55001 & 2.03e-4  \quad  4.02                                     
& 1.89e-4  \quad    3.37    & 1.78e-5  \quad  4.35                              & 1.37e-4  \quad  2.98                                                  \\ \hline
\multirow{4}{*}{3} & 0.8886 & 4101 & 1.47e-3    \quad    ----- 
& 9.50e-4  \quad  ----- & 1.94e-4  \quad  -----                                 & 8.05e-4  \quad  -----                                                 \\ \cline{2-7} 
& 0.4443 & 16401 & 2.76e-5  \quad 5.74  & 4.68e-5  \quad  4.34                  & 4.94e-6  \quad  5.30  & 4.63e-5  \quad  4.12                                                  \\ \cline{2-7} 
& 0.2221 & 65601 & 7.53e-7  \quad      5.20     & 2.84e-6  \quad  4.04          & 1.44e-7  \quad  5.10  & 2.83e-6 \quad   4.03                                                  \\ \cline{2-7} 
& 0.1777 & 102501 & 2.46e-7     \quad   5.02 & 1.16e-6  \quad  4.01             & 4.69e-8  \quad  5.04  & 1.16e-6  \quad  4.01                                                  \\ \hline
\end{tabular}}
\caption{A comparison of the TH-Non-Symmetric method without stabilization ($\delta=0$), to the same method with stabilization ($\delta=10$) at $t = 1.0$. The stabilization term takes the form $\delta|\ubold_h| \left( \nabla\cdot\ubold_h \right) \left(\nabla\cdot\wbold_h \right)$.}
\label{table_TH_nonsymmetric}
\end{center}

\begin{center}
\makeatletter\def\@captype{table}\makeatother
\resizebox{\textwidth}{33mm}{
\begin{tabular}{|p{0.1cm}|c|c|l|l|l|l|}
\hline
\multicolumn{1}{|c|}{\multirow{3}{*}{$k$}} & \multirow{3}{*}{$h$} & \multirow{3}{*}{d.o.f} & \multicolumn{2}{c|}{Upwind flux}                                                                                                     & \multicolumn{2}{c|}{Central flux}                                                                                                   \\ \cline{4-7} 
\multicolumn{1}{|c|}{}                   &                    &                        & \begin{tabular}[c]{@{}l@{}}$\|\ubold-\ubold_{h}\|_{\bm{L}^{2}(\Omega)}$ \\ error \hspace{0.65cm} order\end{tabular} & \begin{tabular}[c]{@{}l@{}}$\|\widetilde{p}-\widetilde{p}_{h}\|_{L^{2}(\Omega)}$\\ error \hspace{0.65cm} order\end{tabular} & \begin{tabular}[c]{@{}l@{}}$\|\ubold-\ubold_{h}\|_{\bm{L}^{2}(\Omega)}$\\ error \hspace{0.65cm} order\end{tabular} & \begin{tabular}[c]{@{}l@{}}$\|\widetilde{p}-\widetilde{p}_{h}\|_{L^{2}(\Omega)}$\\ error \hspace{0.65cm} order\end{tabular} \\ \hline
\multirow{4}{*}{1} & 0.8886 & 2101 & 1.98e-2   \quad    -----                   & 6.79e-2  \quad  ----- & 1.82e-2  \quad  -----& 6.77e-2  \quad  -----         \\ \cline{2-7} 
& 0.4443 & 8401 & 2.46e-3   \quad 3.01  & 1.72e-2  \quad  1.98                  & 2.29e-3  \quad  2.99 & 1.72e-2  \quad    1.98                                                  \\ \cline{2-7} 
& 0.2221 & 33601 & 2.95e-4  \quad  3.06                                     
& 4.30e-3  \quad    2.00 & 2.85e-4  \quad  3.01                                 & 4.30e-3  \quad  2.00                                                  \\ \cline{2-7} 
& 0.1777 & 52501 & 1.49e-4  \quad 3.05  & 2.76e-3  \quad  2.00                  & 1.46e-4  \quad  3.01  & 2.75e-3  \quad  2.00                                                  \\ \hline
\multirow{4}{*}{2} & 0.8886 & 4001 & 1.29e-3    \quad    -----                  & 7.04e-3  \quad  ----- & 1.28e-3  \quad  ----- & 7.03e-3  \quad  -----         \\ \cline{2-7} 
& 0.4443 & 16001 & 7.41e-5  \quad  4.12 & 8.89e-4  \quad  2.98                  & 7.37e-5  \quad  4.12 & 8.89e-4  \quad   2.98                                                  \\ \cline{2-7} 
& 0.2221 & 64001 & 4.55e-6  \quad 4.03 & 1.11e-4  \quad    3.00      
& 4.53e-6  \quad  4.02  & 1.11e-4  \quad  3.00                                                  \\ \cline{2-7} 
& 0.1777 & 100001 & 1.87e-6  \quad  3.99                                     
& 5.71e-5  \quad    3.00    & 1.86e-6  \quad  3.98                              & 5.71e-5  \quad  3.00                                                  \\ \hline
\multirow{4}{*}{3} & 0.8886 & 6501 & 8.55e-5    \quad    ----- 
& 5.50e-4  \quad  ----- & 8.26e-5  \quad  -----                                 & 5.50e-4  \quad  -----                                                 \\ \cline{2-7} 
& 0.4443 & 26001 & 2.78e-6  \quad 4.94  & 3.47e-5  \quad  3.99                  & 2.72e-6  \quad  4.92  & 3.47e-5  \quad  3.99                                                  \\ \cline{2-7} 
& 0.2221 & 104001 & 8.63e-8  \quad      5.01     & 2.17e-6  \quad  4.00          & 8.56e-8  \quad  4.99  & 2.17e-6 \quad   4.00                                                  \\ \cline{2-7} 
& 0.1777 & 162501 & 2.82e-8     \quad   5.01 & 8.91e-7  \quad  4.00             & 2.80e-8  \quad  5.00  & 8.91e-7  \quad  4.00                                                  \\ \hline
\end{tabular}}
\caption{A comparison of the BDM-Symmetric method with an upwind numerical flux, to the same method with a central numerical flux at $t = 1.0$.}
\label{table_our_up_cen}
\end{center}

\begin{center}
\makeatletter\def\@captype{table}\makeatother
\resizebox{\textwidth}{33mm}{
\begin{tabular}{|p{0.1cm}|c|c|l|l|l|l|}
\hline
\multicolumn{1}{|c|}{\multirow{3}{*}{$k$}} & \multirow{3}{*}{$h$} & \multirow{3}{*}{d.o.f} & \multicolumn{2}{c|}{Upwind flux}                                                                                                     & \multicolumn{2}{c|}{Central flux}                                                                                                   \\ \cline{4-7} 
\multicolumn{1}{|c|}{}                   &                    &                        & \begin{tabular}[c]{@{}l@{}}$\|\ubold-\ubold_{h}\|_{\bm{L}^{2}(\Omega)}$ \\ error \hspace{0.65cm} order\end{tabular} & \begin{tabular}[c]{@{}l@{}}$\|\widetilde{p}-\widetilde{p}_{h}\|_{L^{2}(\Omega)}$\\ error \hspace{0.65cm} order\end{tabular} & \begin{tabular}[c]{@{}l@{}}$\|\ubold-\ubold_{h}\|_{\bm{L}^{2}(\Omega)}$\\ error \hspace{0.65cm} order\end{tabular} & \begin{tabular}[c]{@{}l@{}}$\|\widetilde{p}-\widetilde{p}_{h}\|_{L^{2}(\Omega)}$\\ error \hspace{0.65cm} order\end{tabular} \\ \hline
\multirow{4}{*}{1} & 0.8886 & 2101 & 1.95e-2   \quad    -----                   & 6.80e-2  \quad  -----  & 1.77e-2  \quad  -----& 6.77e-2  \quad  -----         \\ \cline{2-7} 
& 0.4443 & 8401 & 2.40e-3   \quad 3.02  & 1.72e-2  \quad  1.98                  & 2.22e-3  \quad  2.99 & 1.72e-2  \quad    1.98                                                  \\ \cline{2-7} 
& 0.2221 & 33601 & 2.88e-4  \quad  3.06                                     
& 4.31e-3  \quad    2.00 & 2.78e-4  \quad  3.00                                 & 4.30e-3  \quad  2.00                                                  \\ \cline{2-7} 
& 0.1777 & 52501 & 1.46e-4  \quad 3.04  & 2.76e-3  \quad  2.00                  & 1.42e-4  \quad  3.00  & 2.76e-3  \quad  2.00                                                  \\ \hline
\multirow{4}{*}{2} & 0.8886 & 4001 & 1.31e-3    \quad    -----                  & 7.05e-3  \quad  ----- & 1.29e-3  \quad  ----- & 7.03e-3  \quad  -----         \\ \cline{2-7} 
& 0.4443 & 16001 & 7.52e-5  \quad  4.12 & 8.90e-4  \quad  2.99                  & 7.47e-5  \quad  4.11 & 8.89e-4  \quad   2.98                                                  \\ \cline{2-7} 
& 0.2221 & 64001 & 4.60e-6  \quad 4.03 & 1.11e-4  \quad    3.00      
& 4.58e-6  \quad  4.03  & 1.11e-4  \quad  3.00                                                  \\ \cline{2-7} 
& 0.1777 & 100001 & 1.89e-6  \quad  3.99                                     
& 5.71e-5  \quad    3.00    & 1.88e-6  \quad  3.99                              & 5.71e-5  \quad  3.00                                                  \\ \hline
\multirow{4}{*}{3} & 0.8886 & 6501 & 8.66e-5    \quad    ----- 
& 5.50e-4  \quad  ----- & 8.36e-5  \quad  -----                                 & 5.50e-4  \quad  -----                                                 \\ \cline{2-7} 
& 0.4443 & 26001 & 2.82e-6  \quad 4.94  & 3.47e-5  \quad  3.99                  & 2.77e-6  \quad  4.92  & 3.47e-5  \quad  3.99                                                  \\ \cline{2-7} 
& 0.2221 & 104001 & 8.77e-8  \quad      5.01     & 2.17e-6  \quad  4.00          & 8.70e-8  \quad  4.99  & 2.17e-6 \quad   4.00                                                  \\ \cline{2-7} 
& 0.1777 & 162501 & 2.87e-8     \quad   5.01 & 8.91e-7  \quad  4.00             & 2.85e-8  \quad  5.00  & 8.91e-7  \quad  4.00                                                  \\ \hline
\end{tabular}}
\caption{A comparison of the BDM-Non-Symmetric method with an upwind numerical flux, to the same method with a central numerical flux at $t = 1.0$.}
\label{table_our_up_comparison}
\end{center}

\begin{figure}[h]
\centering
\includegraphics[width=0.425\linewidth,trim={5cm 2cm 0cm 2cm},clip]{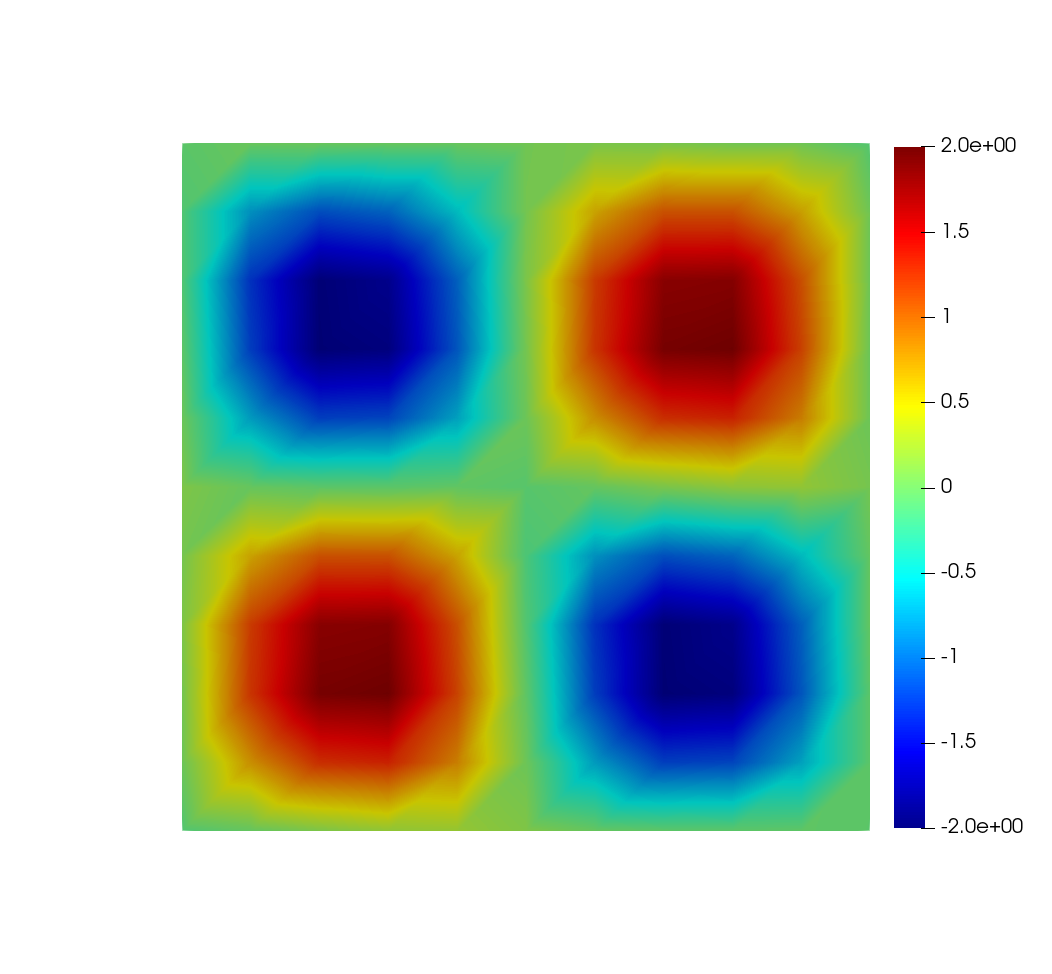}
\includegraphics[width=0.425\linewidth,trim={5cm 2cm 0cm 2cm},clip]{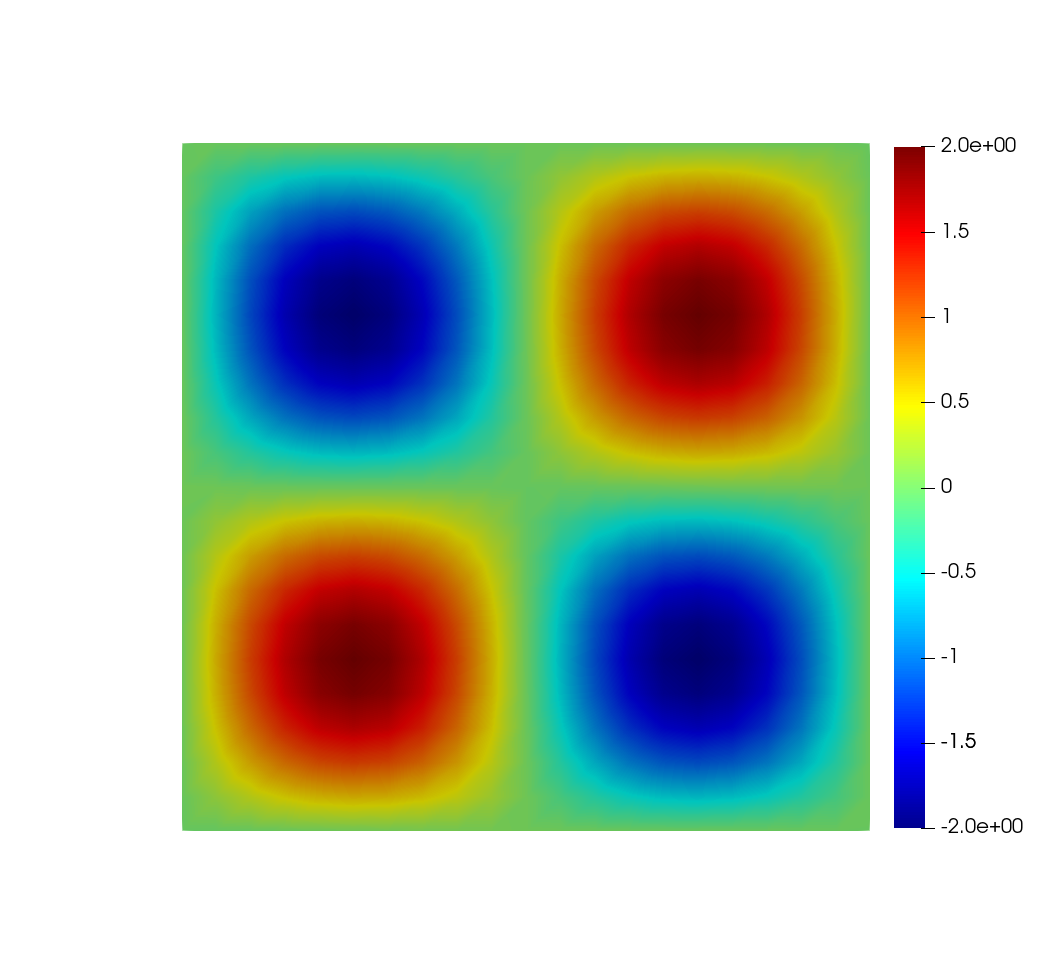}
\caption{Contours of vorticity for the BDM-Symmetric method with $k = 1$ and the upwind flux on meshes with $h = 0.8886$ (left) and $h = 0.4443$ (right) at $t = 1.0$ --- Example 1}
\label{fig:vorticity_k1}
\end{figure}
\begin{figure}[h]
\centering
\includegraphics[width=0.42\linewidth,trim={5cm 2cm 0cm 2cm},clip]{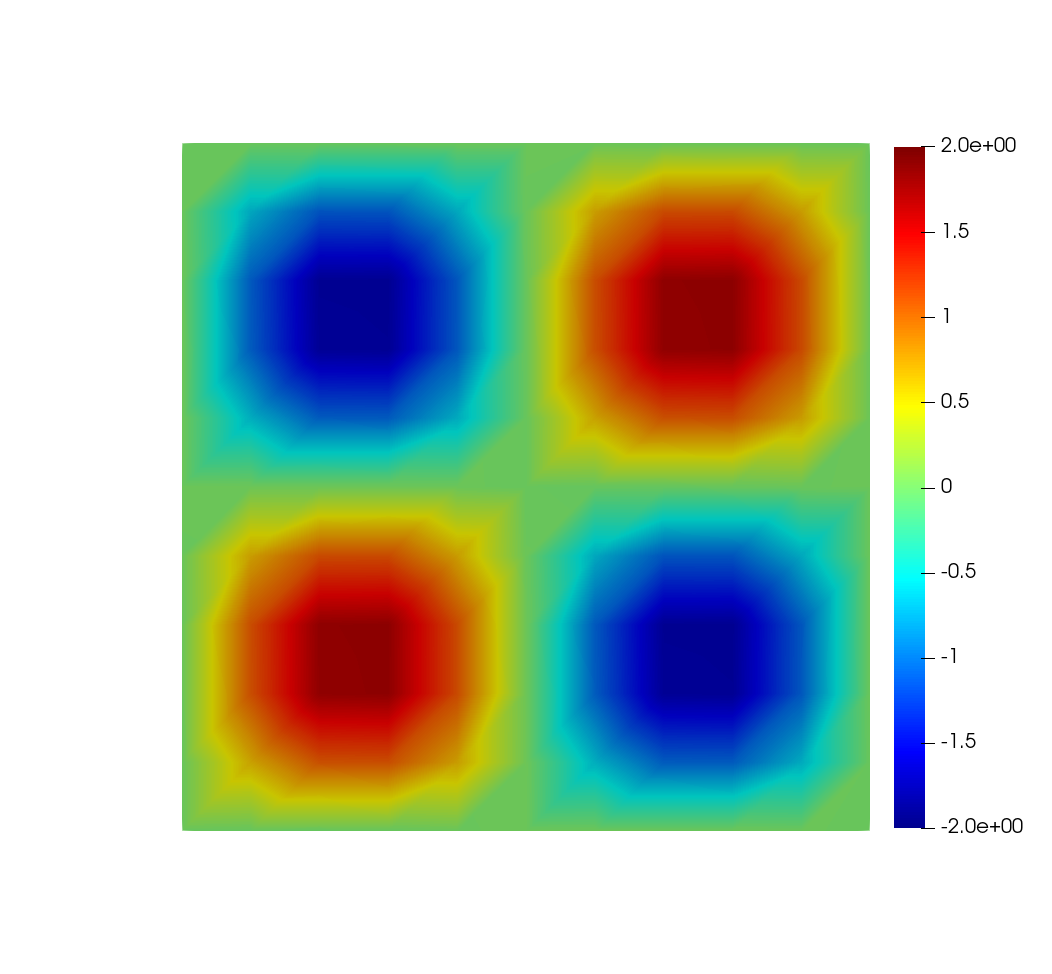}
\includegraphics[width=0.42\linewidth,trim={5cm 2cm 0cm 2cm},clip]{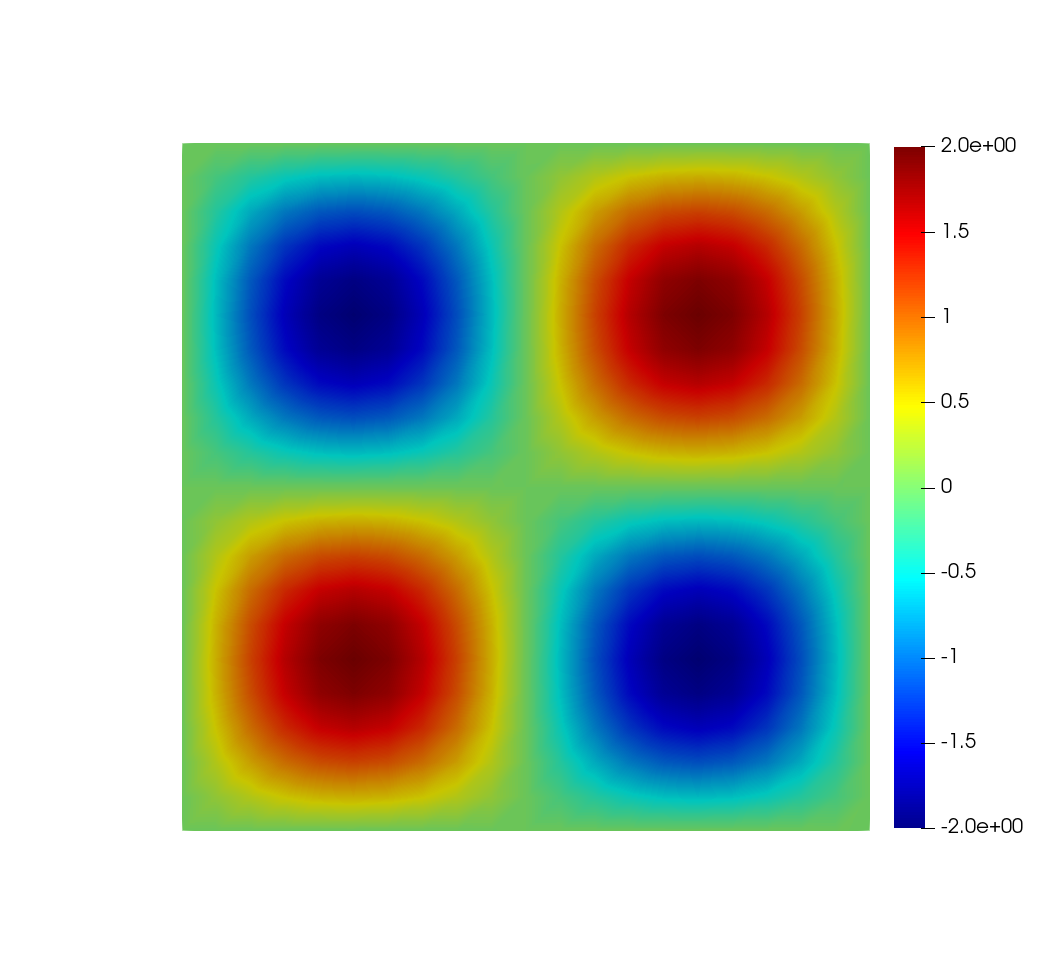}
\caption{Contours of vorticity for the BDM-Symmetric method with $k = 2$ and the upwind flux on meshes with $h = 0.8886$ (left) and $h = 0.4443$ (right) at $t = 1.0$ --- Example 1}
\label{fig:vorticity_k2}
\end{figure}

\begin{figure}[h]
\centering
\includegraphics[width=0.425\linewidth,trim={5cm 2cm 0cm 2cm},clip]{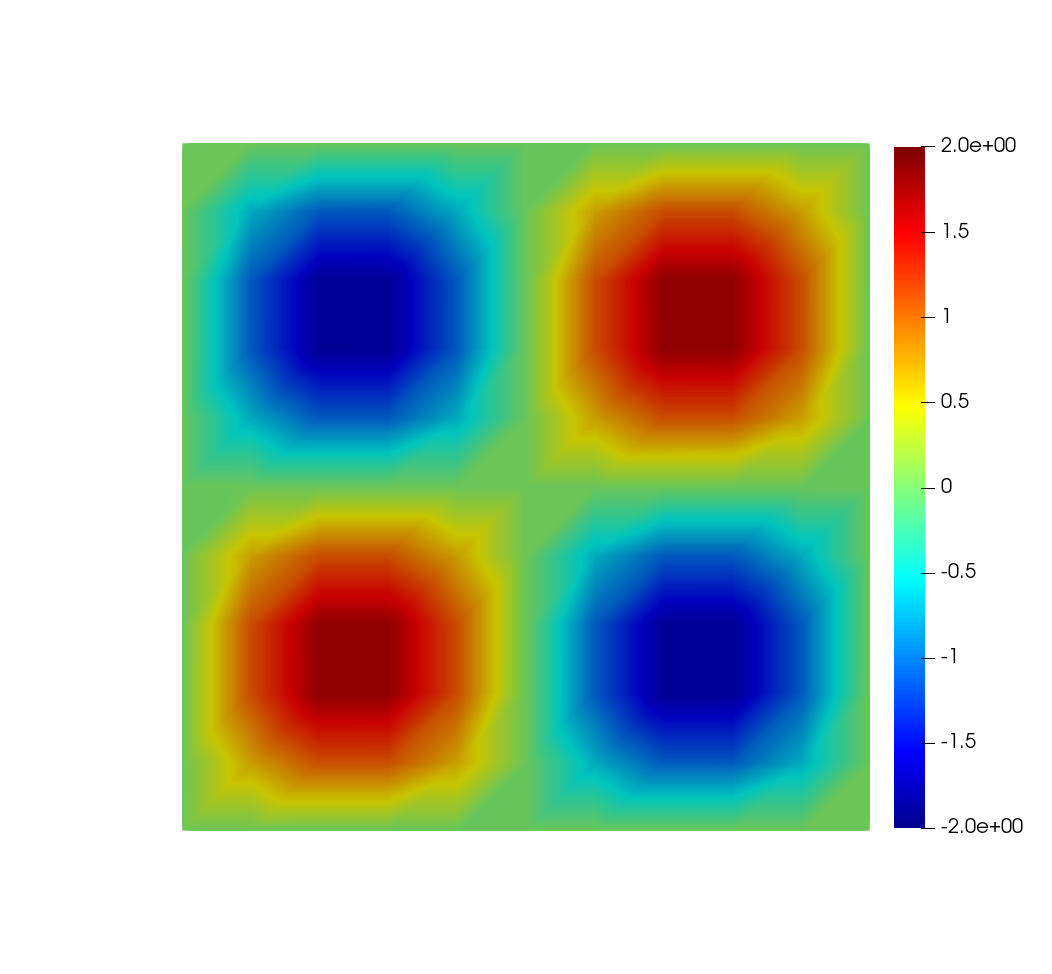}
\includegraphics[width=0.425\linewidth,trim={5cm 2cm 0cm 2cm},clip]{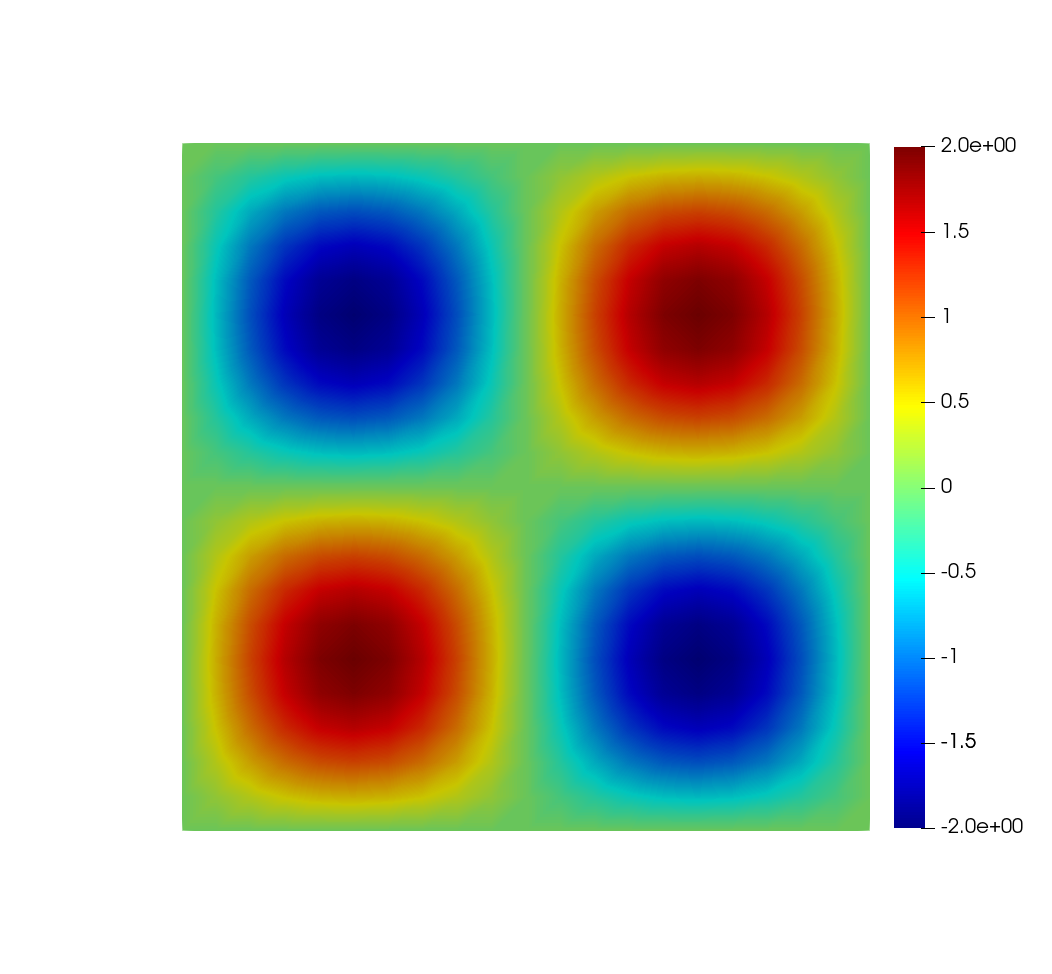}
\caption{Contours of vorticity for the BDM-Symmetric method with $k = 3$ and the upwind flux on meshes with $h = 0.8886$ (left) and $h = 0.4443$ (right) at $t = 1.0$ --- Example 1}
\label{fig:vorticity_k3}
\end{figure}

\begin{figure}[h]
\centering
\includegraphics[width=0.425\linewidth,trim={5cm 2cm 0cm 2cm},clip]{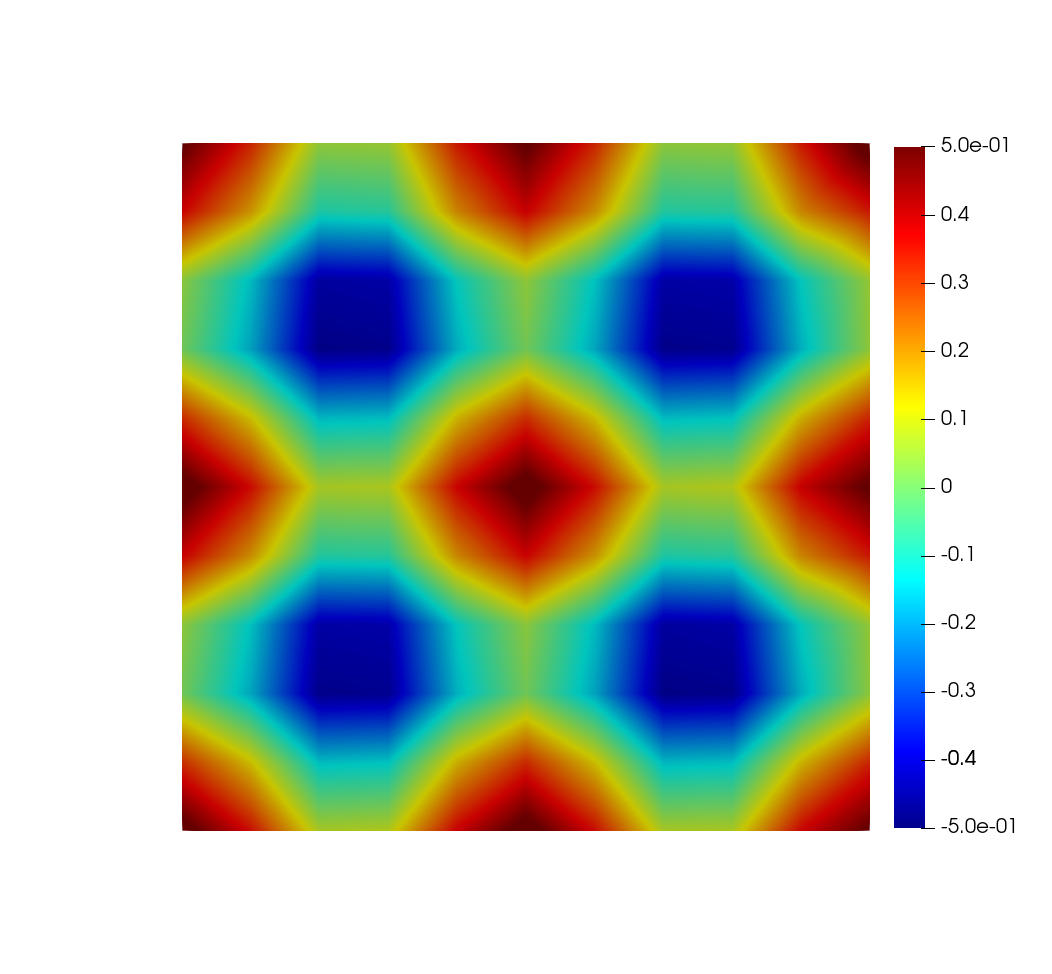}
\includegraphics[width=0.425\linewidth,trim={5cm 2cm 0cm 2cm},clip]{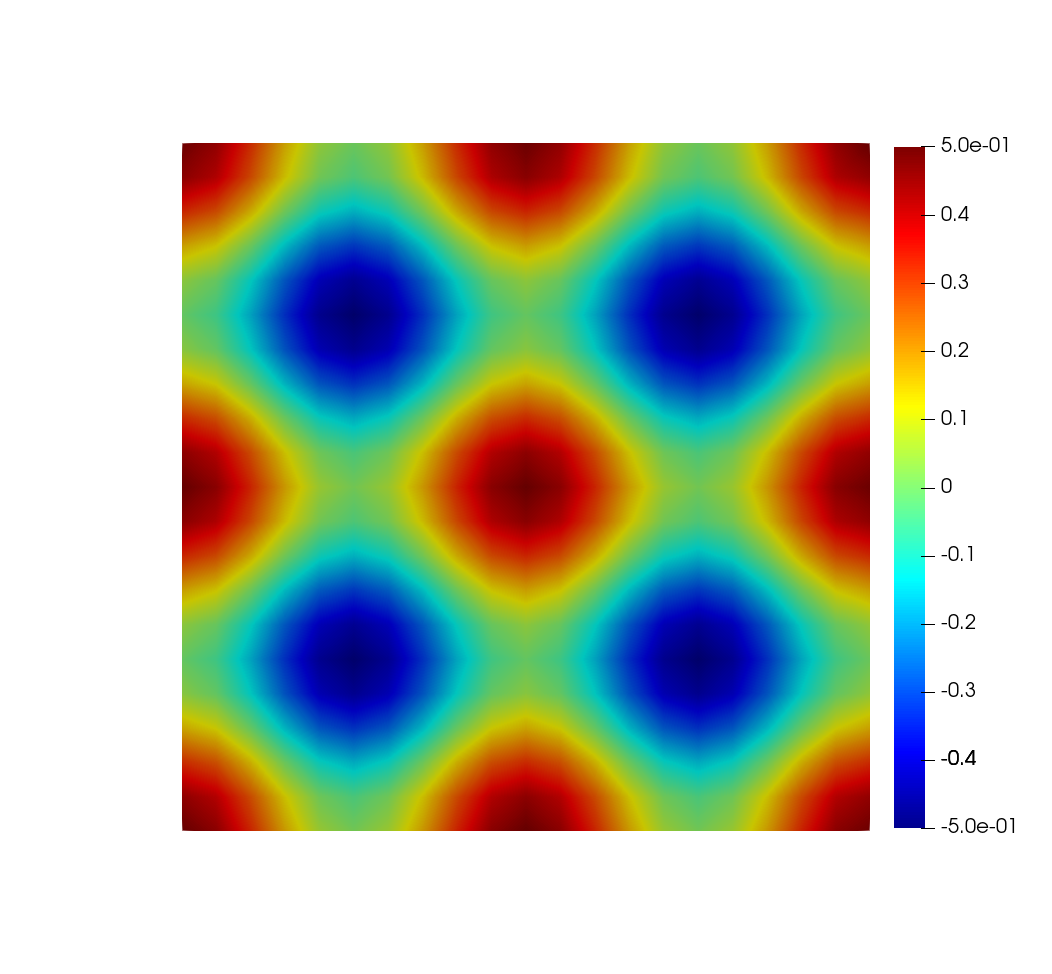}
\caption{Contours of pressure for the BDM-Symmetric method with $k = 1$ and the upwind flux on meshes with $h = 0.8886$ (left) and $h = 0.4443$ (right) at $t = 1.0$ --- Example 1}
\label{fig:pressure_k1}
\end{figure}

\begin{figure}[h]
\centering
\includegraphics[width=0.425\linewidth,trim={5cm 2cm 0cm 2cm},clip]{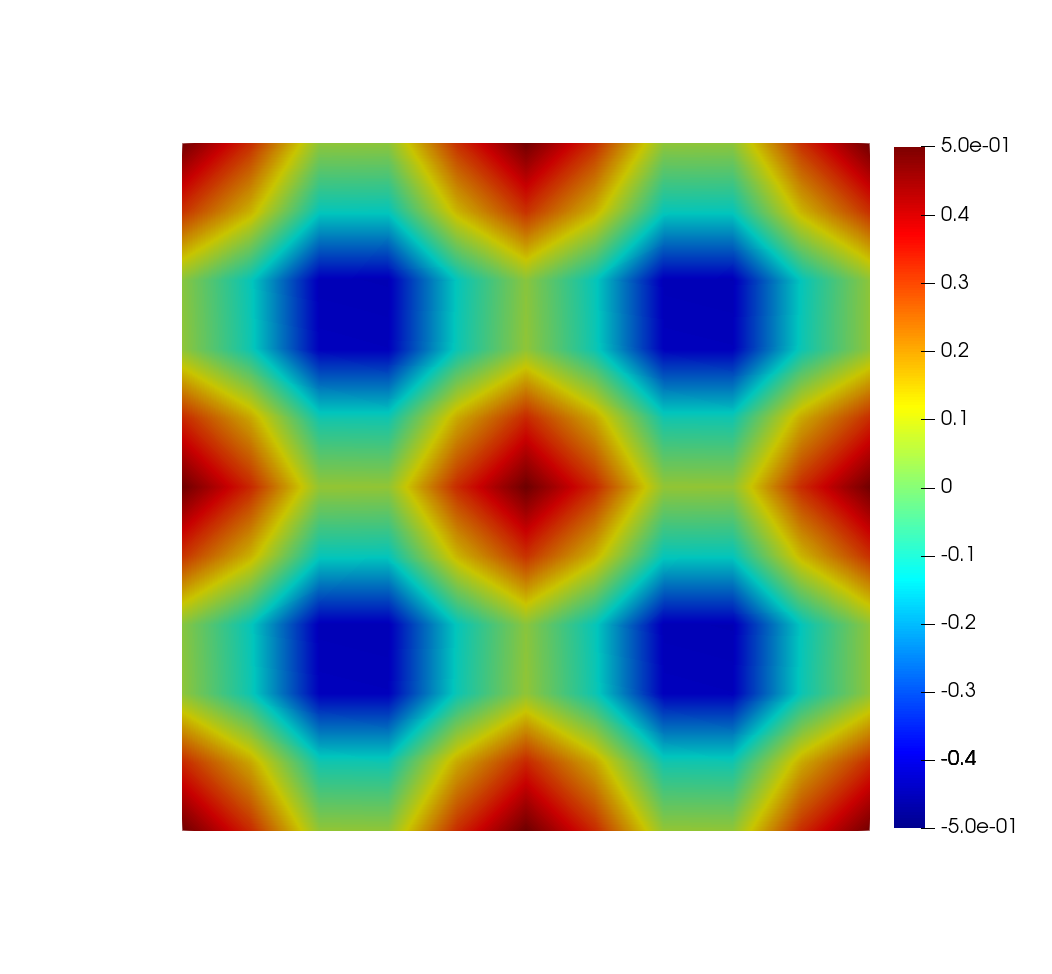}
\includegraphics[width=0.425\linewidth,trim={5cm 2cm 0cm 2cm},clip]{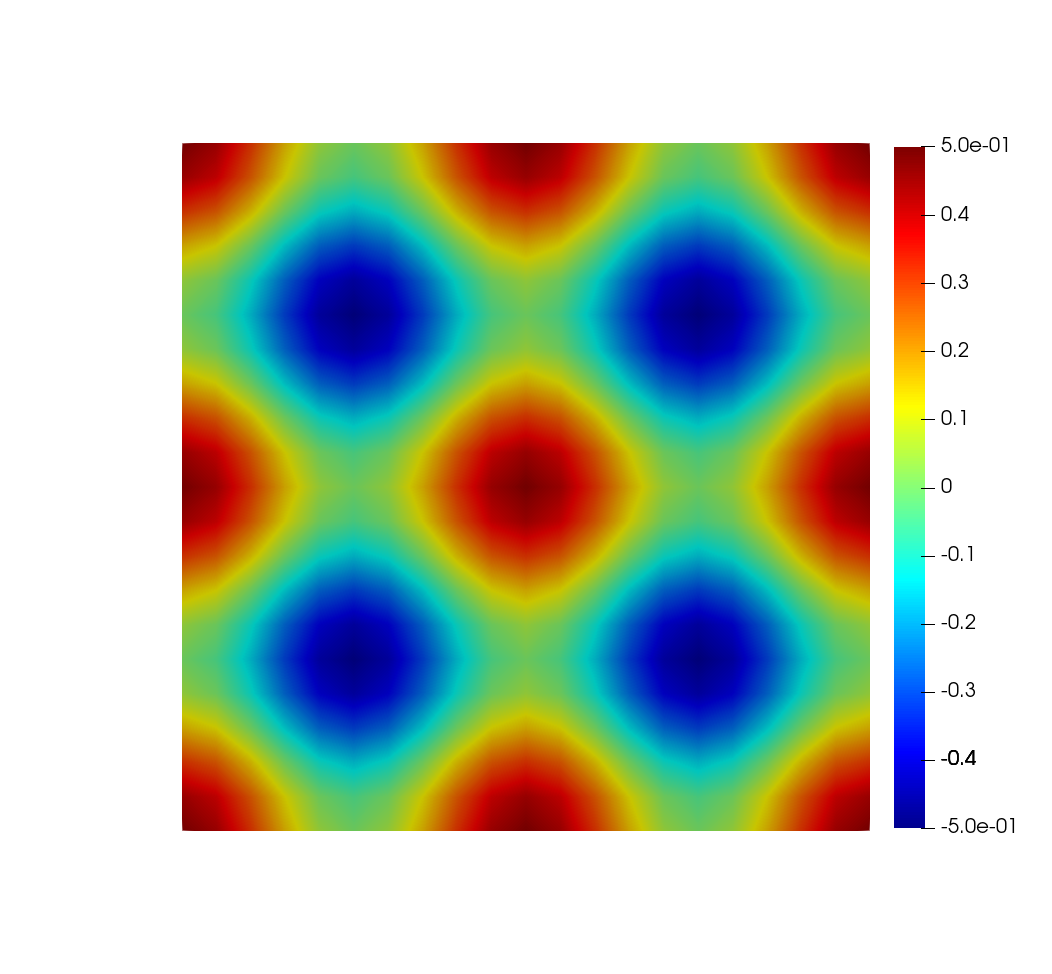}
\caption{Contours of pressure for the BDM-Symmetric method with $k = 2$ and the upwind flux on meshes with $h = 0.8886$ (left) and $h = 0.4443$ (right) at $t = 1.0$ --- Example 1}
\label{fig:pressure_k2}
\end{figure}
\begin{figure}[h]
\centering
\includegraphics[width=0.425\linewidth,trim={5cm 2cm 0cm 2cm},clip]{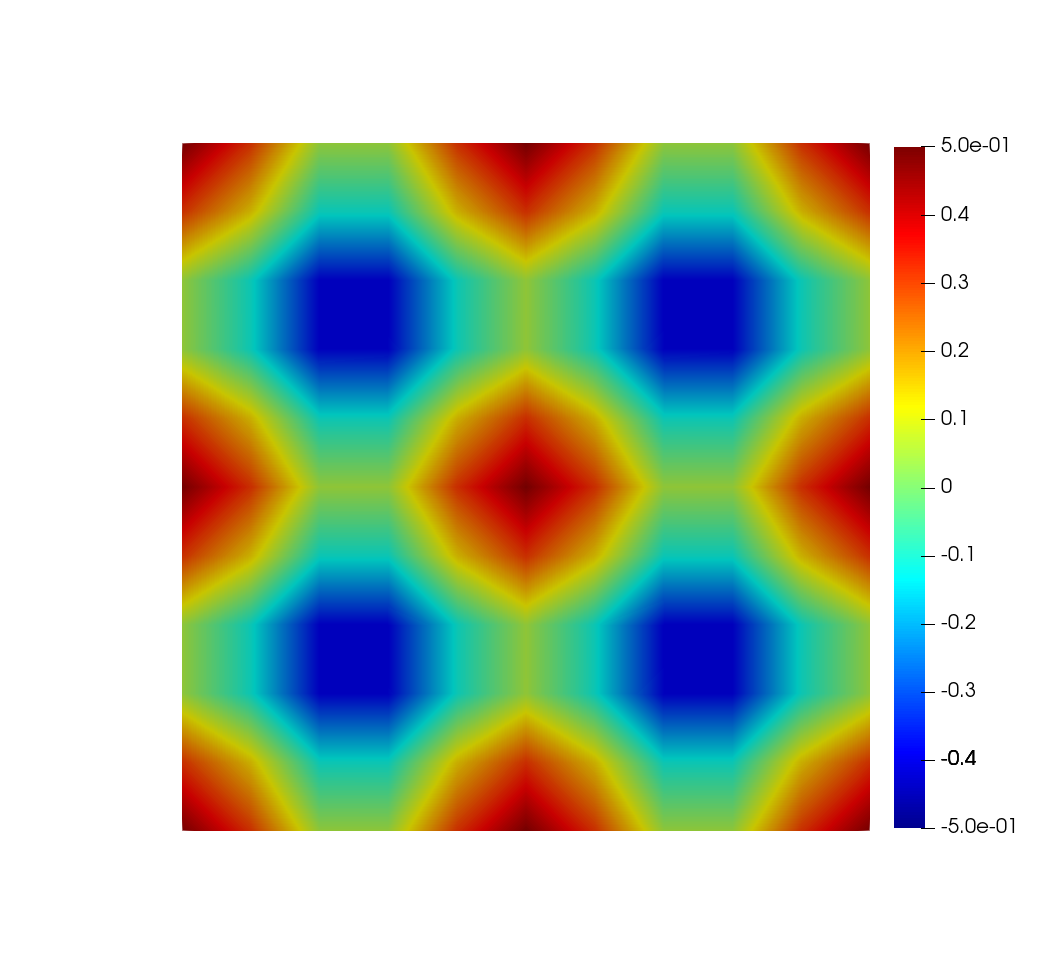}
\includegraphics[width=0.425\linewidth,trim={5cm 2cm 0cm 2cm},clip]{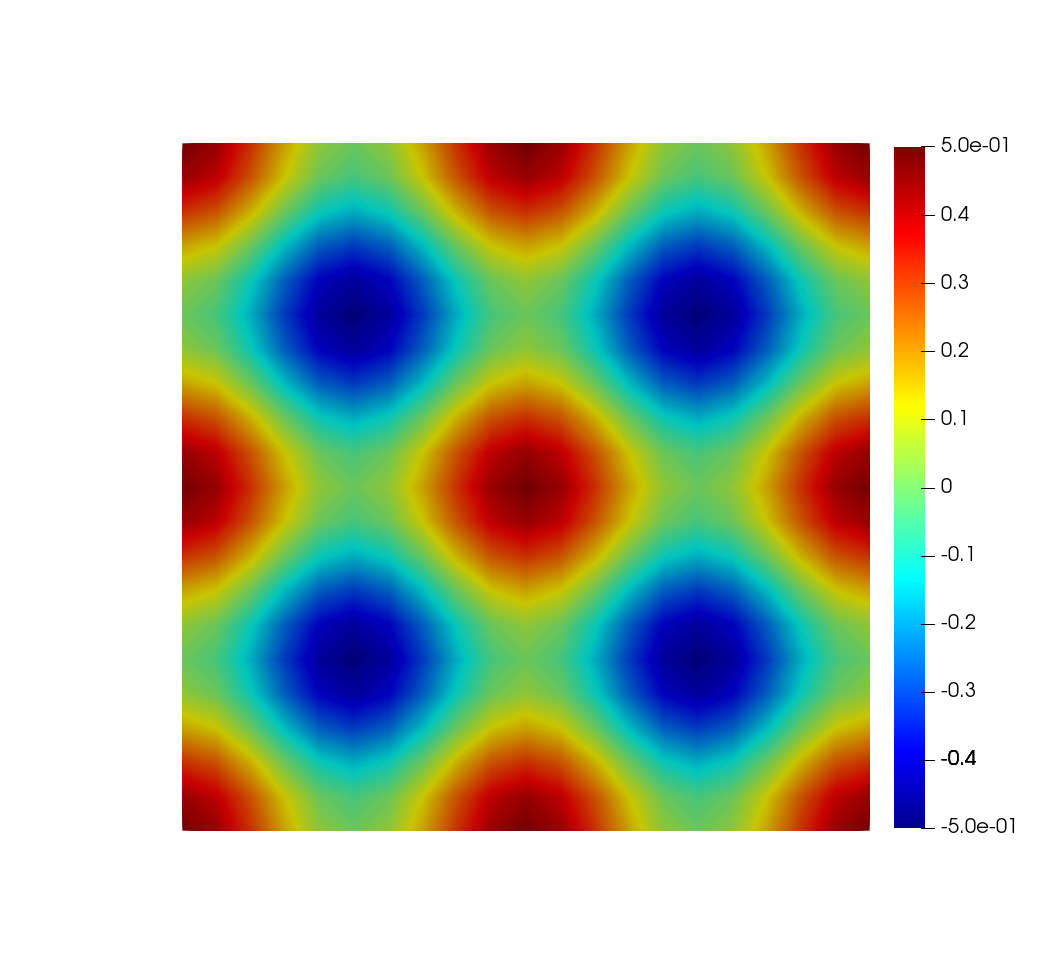}
\caption{Contours of pressure for the BDM-Symmetric method with $k = 3$ and the upwind flux on meshes with $h = 0.8886$ (left) and $h = 0.4443$ (right) at $t = 1.0$ --- Example 1}
\label{fig:pressure_k3}
\end{figure}

\begin{figure}[h]
\centering
\includegraphics[width=0.425\linewidth,trim={5cm 2cm 0cm 2cm},clip]{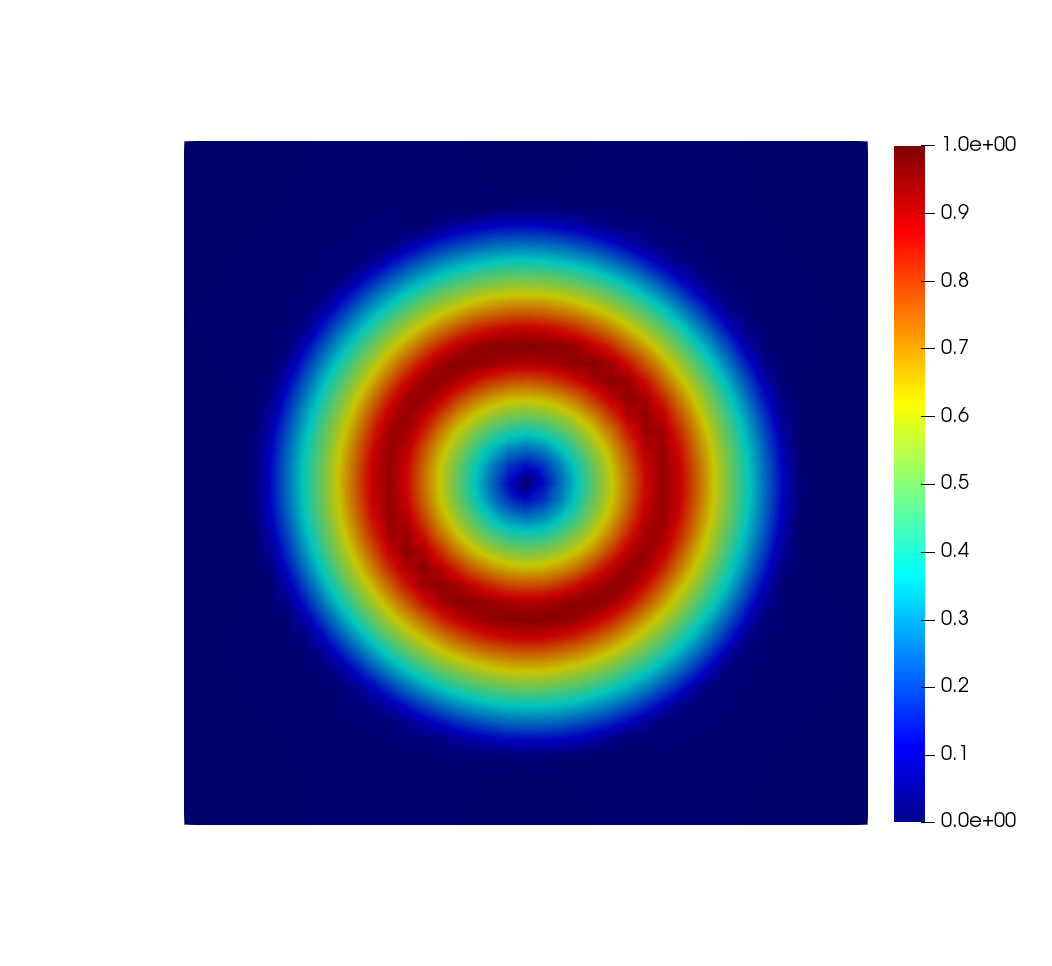}
\includegraphics[width=0.425\linewidth,trim={5cm 2cm 0cm 2cm},clip]{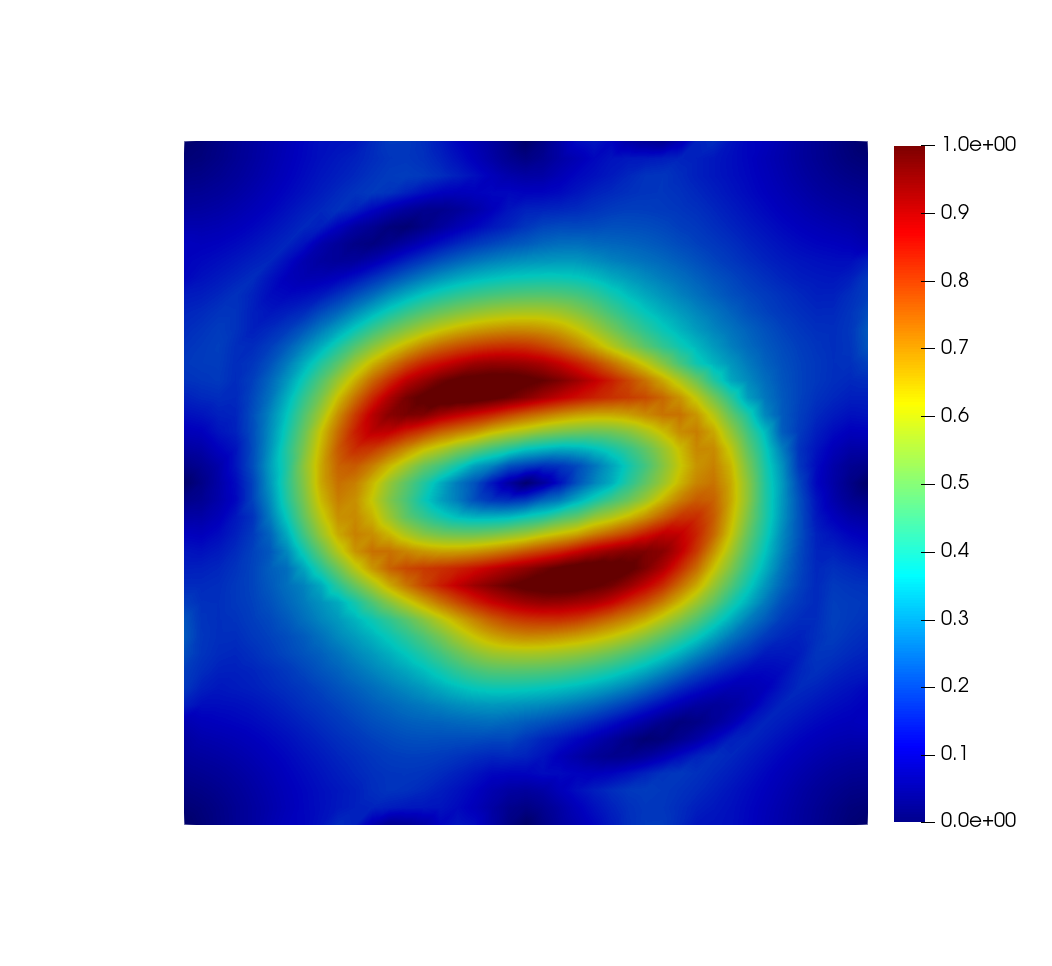}
\caption{Contours of velocity magnitude for the BDM-Symmetric method with upwind flux (left) and central flux (right) with $k = 1$ and $h=0.0354$ at $t = 14.0$ --- Example 2}
\label{fig:v_t4_k1}
\end{figure}

\begin{figure}[h]
\centering
\includegraphics[width=0.425\linewidth,trim={5cm 2cm 0cm 2cm},clip]{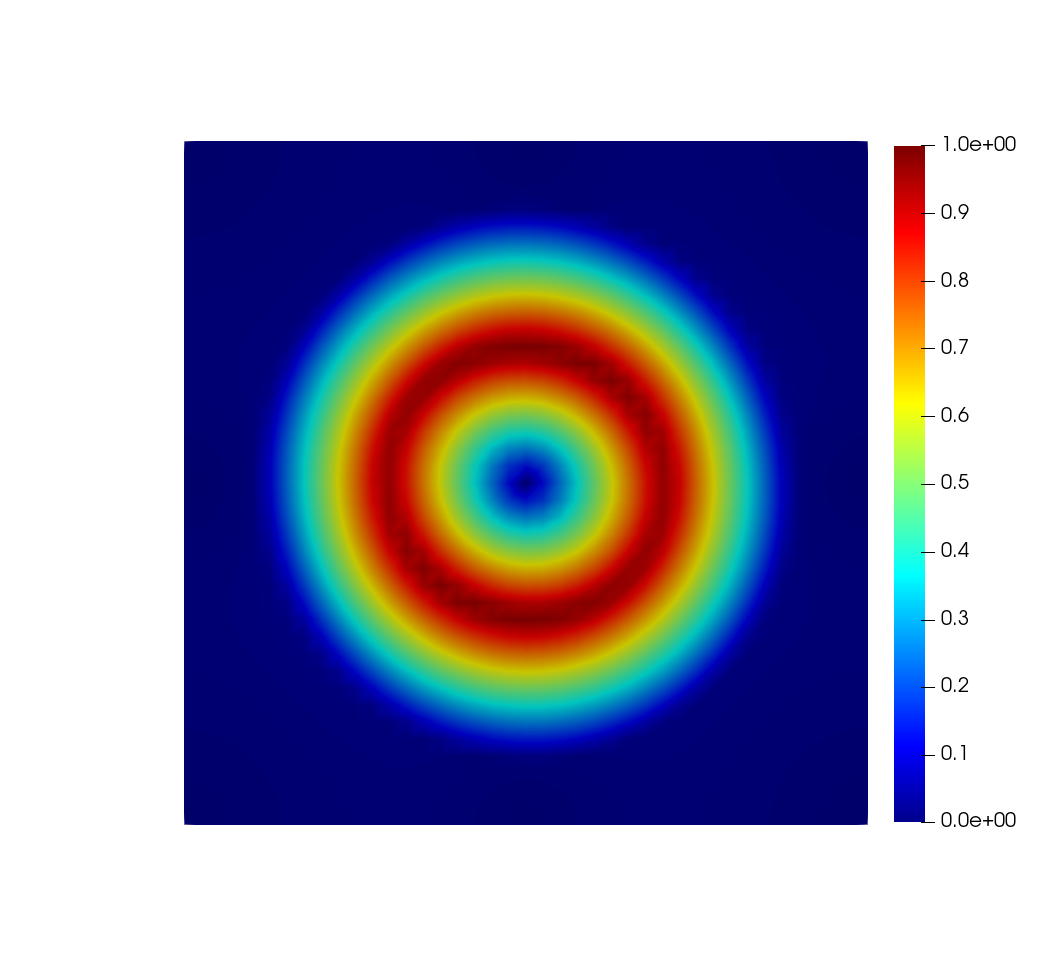}
\includegraphics[width=0.425\linewidth,trim={5cm 2cm 0cm 2cm},clip]{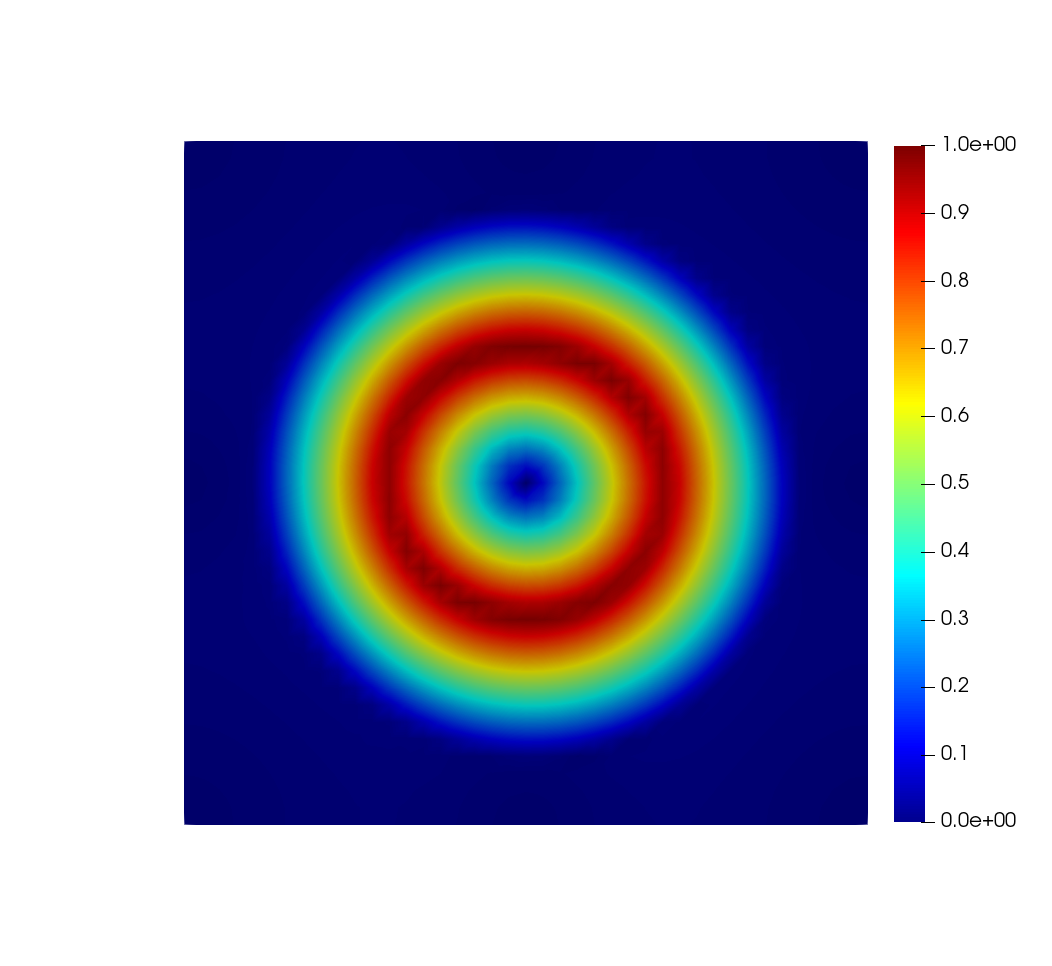}
\caption{Contours of velocity magnitude for the BDM-Symmetric method with upwind flux (left) and central flux (right) with $k = 2$ and $h=0.0354$ at $t = 14.0$ --- Example 2}
\label{fig:v_t4_k2}
\end{figure}

\begin{figure}[h]
\centering
\includegraphics[width=0.425\linewidth,trim={5cm 2cm 0cm 2cm},clip]{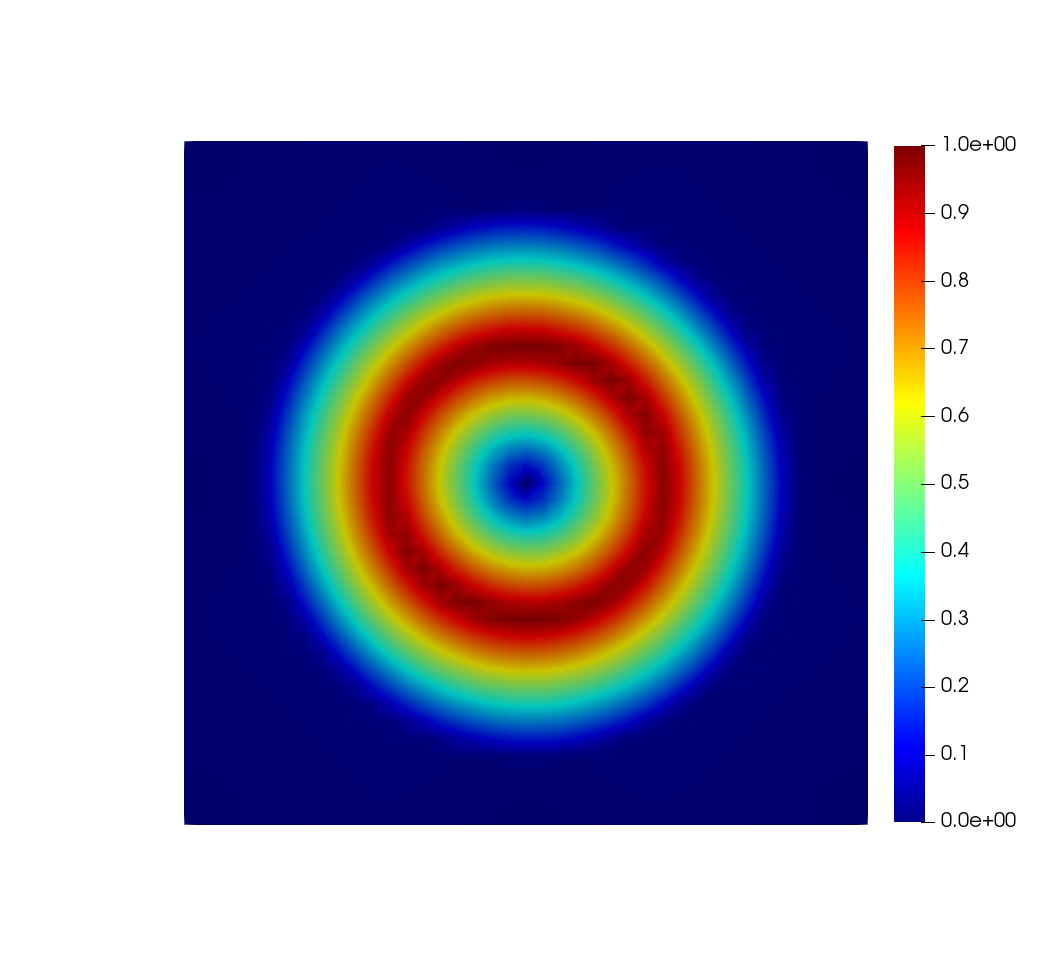}
\includegraphics[width=0.425\linewidth,trim={5cm 2cm 0cm 2cm},clip]{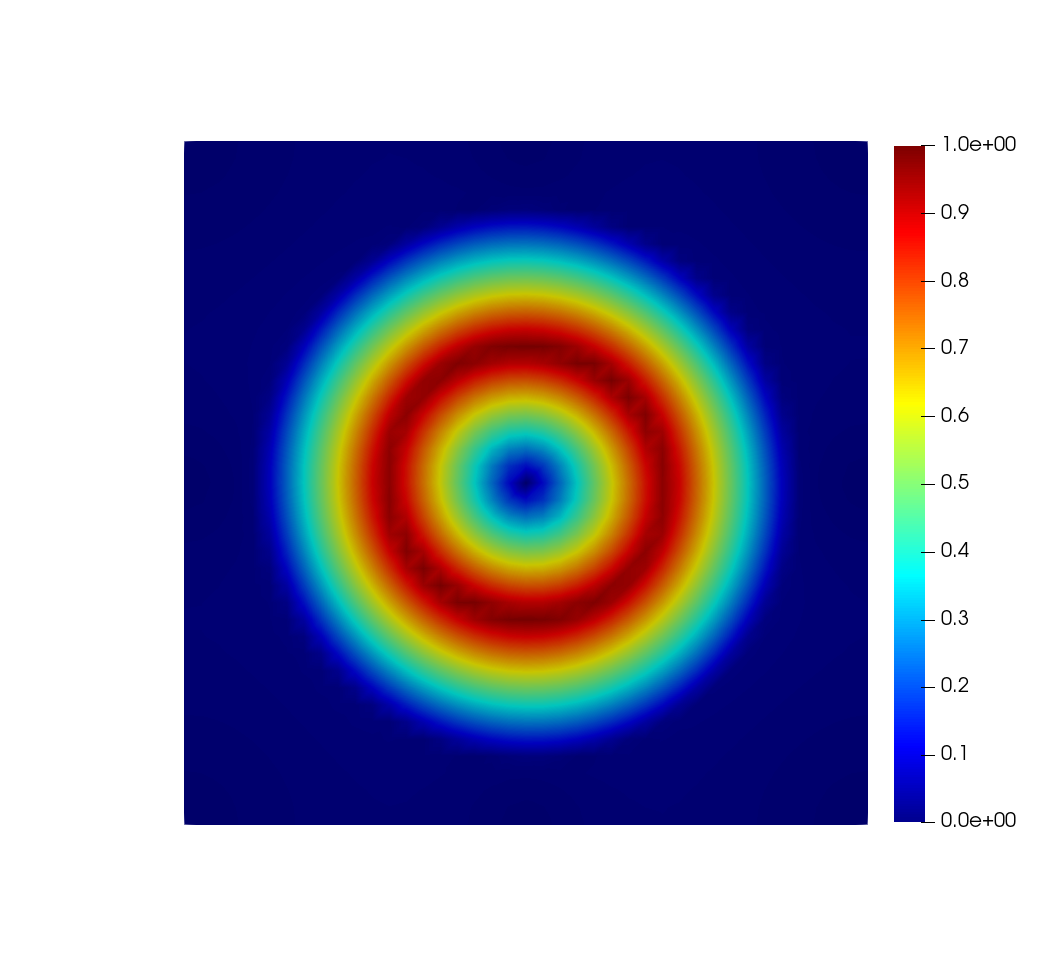}
\caption{Contours of velocity magnitude for the BDM-Symmetric method with upwind flux (left) and central flux (right) with $k = 3$ and $h=0.0354$ at $t = 14.0$ --- Example 2}
\label{fig:v_t4_k3}
\end{figure}

\clearpage

\section{Conclusion}\label{conclusion}

This paper presents a more consistent way to treat the divergence-free constraint that appears in both the mass and momentum conservation equations. The key idea is to use the full, compressible version of the stress tensor, and allow the finite element scheme to completely control the enforcement of the divergence-free constraint in the momentum equation. In this way, the divergence-free constraint in the momentum equation is not enforced \emph{a priori} (which is the case in most classical schemes), but rather is dictated by the scheme. The resulting schemes are versatile, in the sense that they are easily generalizable to compressible flows, and they enable the straightforward treatment of rotationally symmetric flows.

In order to help build a mathematical foundation for the schemes, we established the existence of a new norm associated with the viscous bilinear form. In addition, we presented some coercivity and semi-coercivity results that govern the bilinear and trilinear forms associated with the schemes. We also proved an L2-stability condition that governs the velocity fields for the general class of schemes. Finally, we constructed versatile, Taylor-Hood-based and BDM-based schemes, and then compared them with the standard BDM-based scheme of~\cite{schroeder2018towards}. The numerical simulations indicated that both sets of schemes achieve the same order of accuracy. Therefore, utilization of the symmetric tensor formulation increases the versatility of the schemes, while maintaining their accuracy. 

There are many promising paths for additional research on the proposed schemes. For example, it would be useful to extend the stability analysis in the present work, and develop L2-bounds for the pressure, and (perhaps) for secondary quantities, such as the enstrophy. Furthermore, it would be useful to construct rigorous error estimates for the schemes, in accordance with the analysis in (for example)~\cite{guzman2016h, schroeder2018divergence, schroeder2018towards, schroeder2017pressure}. In addition, it may be possible to extend the conventional  analysis techniques in order to prove stability and error estimates for the schemes in the context of weakly compressible flows, and in the singular limit, as the overall compressibility of a weakly compressible flow approaches zero.

From a more practical standpoint, the next step is to apply the schemes to compressible flow problems. Since the schemes are provably stable for incompressible flows, there is a strong possibility that they will remain stable in weakly compressible flows, regardless of whether or not a rigorous proof can be constructed. In addition, we anticipate that the high-order accuracy of the schemes will extend to weakly compressible flows, without any significant issues. Finally, there are potentially promising applications of the schemes to complex flows, which contain multiple regimes of incompressible and compressible flow. We hope to explore each of these potential applications in future work.


\section*{Acknowledgements}
The authors would like to thank Prof. Johnny Guzman (Brown University), Dr. Qingguo Hong (Pennsylvania State University), and Dr. Dmitry Kamenetskiy (The Boeing Company) for their participation in conversations that helped shape this work.

\section*{Disclosure Statement}

The authors certify that they have no affiliations with or involvement in any organization or entity with any financial interest, or non-financial interest in the subject matter or materials discussed in this manuscript.

\appendix

\section{Complementary Result(s)}\label{compelmentary}

\begin{lemma} Suppose that $\wbold_h$ and $\bm{\beta}_h\in\bm{H}_{0}(\text{div};\Omega)$. Then, the following identity holds:
%
\begin{align}
\ipt{\bm{\beta}_h \cdot \nabla_h \wbold_h}{\wbold_h} + \frac{1}{2} \ipt{\left(\nabla \cdot \bm{\beta}_h \right) \wbold_h}{\wbold_h} =  \iipbf{ \left(\bm{\beta}_h \cdot \nbold_F \right) \llbracket \wbold_h \rrbracket}{\llcurve \wbold_h \rrcurve}.
\label{temam_id_spec}
\end{align}
\label{jump_lemma}
\end{lemma}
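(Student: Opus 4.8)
The plan is to collapse the left-hand side to a single sum of element-boundary integrals via the elementary chain-rule (``Temam'') identity, and then re-assemble those integrals into a sum over interior faces. Since the lemma is only ever applied with $\wbold_h,\bm{\beta}_h$ finite element functions (hence piecewise polynomial, and in particular piecewise smooth), I would first state and use the pointwise identity on each element $K$,
\[
\left(\bm{\beta}_h\cdot\nabla\wbold_h\right)\cdot\wbold_h=\tfrac12\,\bm{\beta}_h\cdot\nabla\!\left(\left|\wbold_h\right|^2\right),
\]
and if one insists on the bare $\bm{H}_0(\text{div};\Omega)$ hypothesis a routine density argument recovers the general case.

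First I would sum the pointwise identity over $K\in\mathcal{T}_h$ and apply the integration-by-parts formula $\langle\phi\,\bm{\beta}_h,\nbold\rangle_{\partial K}=(\phi,\nabla\cdot\bm{\beta}_h)_K+(\bm{\beta}_h,\nabla\phi)_K$ with the scalar choice $\phi=\left|\wbold_h\right|^2$. This yields
\[
\ipt{\bm{\beta}_h\cdot\nabla_h\wbold_h}{\wbold_h}=\tfrac12\,\ipbt{\left|\wbold_h\right|^2}{\bm{\beta}_h\cdot\nbold}-\tfrac12\,\ipt{\left(\nabla\cdot\bm{\beta}_h\right)\wbold_h}{\wbold_h},
\]
so that adding $\tfrac12\ipt{(\nabla\cdot\bm{\beta}_h)\wbold_h}{\wbold_h}$ to both sides makes the whole left-hand side of the lemma equal to $\tfrac12\,\ipbt{\left|\wbold_h\right|^2}{\bm{\beta}_h\cdot\nbold}$.

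Next I would convert the sum over $\partial\mathcal{T}_h$ into a sum over faces. On a boundary face the contribution vanishes because $\bm{\beta}_h\in\bm{H}_0(\text{div};\Omega)$ forces $\bm{\beta}_h\cdot\nbold=0$ there. On an interior face $F=\partial K_+\cap\partial K_-$ the two element contributions combine, using $\nbold_-=-\nbold_+$ and the single-valuedness of the normal trace $\bm{\beta}_h\cdot\nbold_F$ (which is exactly $\bm{H}(\text{div})$-conformity), into $\int_F\!\big(\left|\wbold_h^{+}\right|^2-\left|\wbold_h^{-}\right|^2\big)\,(\bm{\beta}_h\cdot\nbold_F)\,dA$. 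Then the algebraic factorization $\left|\wbold_h^{+}\right|^2-\left|\wbold_h^{-}\right|^2=(\wbold_h^{+}-\wbold_h^{-})\cdot(\wbold_h^{+}+\wbold_h^{-})=2\,\llbracket\wbold_h\rrbracket\cdot\llcurve\wbold_h\rrcurve$ cancels the $\tfrac12$ and produces precisely $\iipbf{\left(\bm{\beta}_h\cdot\nbold_F\right)\llbracket\wbold_h\rrbracket}{\llcurve\wbold_h\rrcurve}$, which is the claimed identity \eqref{temam_id_spec}. I do not anticipate any genuine difficulty here; the only points needing care are the well-definedness of the pointwise products (handled by restricting to the piecewise-regular setting in which the lemma is used) and the bookkeeping of normal-vector signs when splitting the element-boundary sum into the interior-face sum.
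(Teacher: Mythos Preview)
Your argument is correct and is essentially the same as the paper's: both reduce the left-hand side to $\tfrac12\ipbt{\wbold_h\cdot\wbold_h}{\bm{\beta}_h\cdot\nbold}$ via the chain-rule/integration-by-parts identity, and then decompose this element-boundary sum into interior-face contributions using the $\bm{H}(\text{div})$-conformity of $\bm{\beta}_h$ and the factorization $|\wbold_h^+|^2-|\wbold_h^-|^2=2\,\llbracket\wbold_h\rrbracket\cdot\llcurve\wbold_h\rrcurve$. The only cosmetic difference is that the paper phrases the face decomposition via the standard jump--average product identity $\llbracket\phi\bm{\beta}_h\rrbracket\cdot\nbold_F=\llbracket\bm{\beta}_h\rrbracket\cdot\nbold_F\llcurve\phi\rrcurve+\llcurve\bm{\beta}_h\rrcurve\cdot\nbold_F\llbracket\phi\rrbracket$ with $\phi=\wbold_h\cdot\wbold_h$, whereas you write out the two-element contribution explicitly.
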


\begin{proof}
We begin by using integration by parts to expand the second term on the LHS of Eq.~\eqref{temam_id_spec} 
\begin{align}
 \frac{1}{2} \ipt{\left(\nabla \cdot \bm{\beta}_h \right) \wbold_h}{\wbold_h} & = \frac{1}{2}  \ipbt{\wbold_h}{\wbold_h \left(\bm{\beta}_h \cdot \nbold \right)}  -  \ipt{\bm{\beta}_h \cdot \nabla_h \wbold_h}{\wbold_h}. \label{ip_sec}
\end{align}
Upon using Eq.~\eqref{ip_sec} to rewrite the LHS of  Eq.~\eqref{temam_id_spec}, we obtain the following
\begin{align}
& \ipt{\bm{\beta}_h \cdot \nabla_h \wbold_h}{\wbold_h} + \frac{1}{2} \ipt{\left(\nabla \cdot \bm{\beta}_h \right) \wbold_h}{\wbold_h} = \frac{1}{2}  \ipbt{\wbold_h}{\wbold_h \left(\bm{\beta}_h \cdot \nbold \right)}. \label{ip_third}
\end{align}
Next, we can rewrite the RHS of Eq.~\eqref{ip_third} in terms of summations over faces $F$ in the mesh
\begin{align}
\nonumber \frac{1}{2}  \ipbt{\wbold_h}{\wbold_h \left(\bm{\beta}_h \cdot \nbold \right)} & = \frac{1}{2} \iipbf{\llbracket \left( \wbold_h \cdot \wbold_h \right) \bm{\beta}_h \rrbracket}{\nbold_F} \\[1.5ex]
& = \frac{1}{2} \iipbf{\llbracket \bm{\beta}_h \rrbracket}{\nbold_F \llcurve \wbold_h \cdot \wbold_h \rrcurve} + \iipbf{ \left(\llcurve \bm{\beta}_h \rrcurve \cdot \nbold_F \right) \llbracket \wbold_h \rrbracket}{\llcurve \wbold_h \rrcurve}.\label{ip_fourth}
\end{align}
We complete the proof by substituting Eq.~\eqref{ip_fourth} into Eq.~\eqref{ip_third}, and noting that $\bm{\beta}_h$ has continuous normal components.
\end{proof}

\section{Derivation of the General Mixed Methods} \label{fem_deriv}

\subsection{Mass Equation Derivation}

One may substitute $\ubold_h$ into Eq.~\eqref{mass_cons}, multiply by a test function $q_h$, and integrate over the entire domain in order to obtain 
\begin{align}
\ipt{\nabla \cdot \ubold_h}{q_h} = 0, \label{mass_ibp_one}
\end{align}
which is identical to Eq.~\eqref{mass_cons_disc}.

%

\subsection{Linear Momentum Equation Derivation}

One may substitute $\widetilde{p}_h$ and $\ubold_h$ into Eq.~\eqref{moment_cons}, compute the dot product with a test function $\wbold_h$, and integrate over the entire domain in order to yield
\begin{align}
& \ipt{\partial_t \ubold_h}{\wbold_h} + \ipt{\nabla_h \cdot \left( \ubold_h \otimes \ubold_h + \widetilde{p}_h \mathbb{I} \right)}{\wbold_h} \label{moment_ibp_neg} \\[1.5ex]
\nonumber &- \nu_h \ipt{ \nabla_h \cdot \left( \nabla_h \ubold_h + \nabla_h \ubold_{h}^{T} - \frac{2}{3} \left(\nabla \cdot \ubold_h \right) \mathbb{I} \right)}{\wbold_h} = \ipt{\widetilde{\bm{f}}}{\wbold_h}. 
\end{align}
Upon integrating the second and third terms by parts and inserting numerical fluxes $\hat{\bm{\sigma}}_{\text{inv}}$ and $\hat{\bm{\sigma}}_{\text{vis}}$, one obtains
\begin{align}
\ipt{\nabla_h \cdot \left( \ubold_h \otimes \ubold_h + \widetilde{p}_h \mathbb{I} \right)}{\wbold_h} &= - \ipt{ \ubold_h \otimes \ubold_h + \widetilde{p}_h \mathbb{I}}{\nabla_h \wbold_h} + \ipbt{\left( \ubold_h \otimes \ubold_h + \widetilde{p}_h \mathbb{I} \right)\nbold}{\wbold_h} \label{moment_ibp_zero} \\[1.5ex]
\nonumber & \equiv - \ipt{  \ubold_h \otimes \ubold_h + \widetilde{p}_h \mathbb{I}}{\nabla_h \wbold_h} + \ipbt{ \hat{\bm{\sigma}}_{\text{inv}} \, \nbold}{\wbold_h} \\[1.5ex]
\nonumber & = - \ipt{\ubold_h \otimes \ubold_h}{\nabla_h \wbold_h} - \ipt{\widetilde{p}_h}{\nabla \cdot \wbold_h} + \ipbt{ \hat{\bm{\sigma}}_{\text{inv}} \, \nbold}{\wbold_h}.
\end{align}
\begin{align}
& -\ipt{ \nabla_h \cdot \left( \nabla_h \ubold_h + \nabla_h \ubold_{h}^{T} - \frac{2}{3} \left(\nabla \cdot \ubold_h \right) \mathbb{I} \right)}{\wbold_h}  \label{moment_ibp_two} \\[1.5ex]
\nonumber &= \ipt{\nabla_h \ubold_h + \nabla_h \ubold_{h}^{T} - \frac{2}{3} \left(\nabla \cdot \ubold_h \right) \mathbb{I}}{\nabla_h \wbold_h} - \ipbt{\left(\nabla_h \ubold_h + \nabla_h \ubold_{h}^{T} - \frac{2}{3} \left(\nabla \cdot \ubold_h \right) \mathbb{I} \right) \nbold}{\wbold_h} \\[1.5ex] 
\nonumber & \equiv \ipt{\nabla_h \ubold_h + \nabla_h \ubold_{h}^{T} - \frac{2}{3} \left(\nabla \cdot \ubold_h \right) \mathbb{I}}{\nabla_h \wbold_h} - \ipbt{ \hat{\bm{\sigma}}_{\text{vis}} \, \nbold}{\wbold_h}. 
\end{align}
One may expand each term in Eq.~\eqref{moment_ibp_two} by integrating by parts, inserting a numerical flux $\hat{\bm{\varphi}}_{\text{vis}}$, and integrating by parts again as follows
\begin{align}
\ipt{ \nabla_h \ubold_h}{\nabla_h \wbold_h} & = -\ipt{\ubold_h}{\nabla_h \cdot \left( \nabla_h \wbold_h \right)} + \ipbt{ \ubold_h}{ \left(\nabla_h \wbold_h \right) \nbold} \label{moment_ibp_three} \\[1.5ex]
\nonumber & \equiv -\ipt{\ubold_h}{\nabla_h \cdot \left( \nabla_h \wbold_h \right)} + \ipbt{\hat{\bm{\varphi}}_{\text{vis}}}{ \left(\nabla_h \wbold_h \right) \nbold} \\[1.5ex]
\nonumber & = \ipt{ \nabla_h \ubold_h}{\nabla_h \wbold_h} + \ipbt{\hat{\bm{\varphi}}_{\text{vis}} -  \ubold_h }{ \left(\nabla_h \wbold_h \right) \nbold}.
\end{align}
\begin{align}
\ipt{ \nabla_h \ubold_h^T}{\nabla_h \wbold_h} & = \ipt{\nabla_h \ubold_h}{ \nabla_h \wbold_h^T} \label{moment_ibp_four} \\[1.5ex]
\nonumber &= -\ipt{\ubold_h}{\nabla_h \cdot \left( \nabla_h \wbold_h^T \right)} + \ipbt{ \ubold_h}{ \left(\nabla_h \wbold_h^T \right) \nbold} \\[1.5ex]
\nonumber & \equiv -\ipt{\ubold_h}{\nabla_h \cdot \left( \nabla_h \wbold_h^T \right)} + \ipbt{\hat{\bm{\varphi}}_{\text{vis}} }{ \left(\nabla_h \wbold_h^T \right) \nbold} \\[1.5ex]
\nonumber & = \ipt{ \nabla_h \ubold_h^T}{\nabla_h \wbold_h} + \ipbt{\hat{\bm{\varphi}}_{\text{vis}} -  \ubold_h }{ \left(\nabla_h \wbold_h^T \right) \nbold}.
\end{align}
\begin{align}
\ipt{- \frac{2}{3}  \left(\nabla \cdot \ubold_h \right) \mathbb{I}}{\nabla_h \wbold_h} &= - \frac{2}{3} \ipt{\nabla \cdot \ubold_h }{ \nabla \cdot \wbold_h} \label{moment_ibp_five} \\[1.5ex]
\nonumber & = -\frac{2}{3} \left( -\ipt{\ubold_h}{ \nabla_h \left( \nabla \cdot \wbold_h \right)} + \ipbt{ \ubold_h}{ \left(\nabla \cdot \wbold_h \right) \nbold} \right) \\[1.5ex]
\nonumber & \equiv -\frac{2}{3} \left( -\ipt{\ubold_h}{ \nabla_h \left( \nabla \cdot \wbold_h \right)} + \ipbt{\hat{\bm{\varphi}}_{\text{vis}}}{ \left(\nabla \cdot \wbold_h \right) \nbold} \right) \\[1.5ex]
\nonumber & = \ipt{- \frac{2}{3}  \left(\nabla \cdot \ubold_h \right) \mathbb{I}}{\nabla_h \wbold_h} -\frac{2}{3} \ipbt{\hat{\bm{\varphi}}_{\text{vis}} -  \ubold_h}{ \left(\nabla \cdot \wbold_h \right) \nbold}.
\end{align}
Upon combining Eqs.~\eqref{moment_ibp_three} -- \eqref{moment_ibp_five}, one obtains
\begin{align}
& \ipt{\nabla_h \ubold_h + \nabla_h \ubold_{h}^{T} - \frac{2}{3} \left(\nabla \cdot \ubold_h \right) \mathbb{I}}{\nabla_h \wbold_h} \label{moment_ibp_six} \\[1.5ex]
\nonumber & \equiv \ipt{\nabla_h \ubold_h + \nabla_h \ubold_{h}^{T} - \frac{2}{3} \left(\nabla \cdot \ubold_h \right) \mathbb{I}}{\nabla_h \wbold_h} \\[1.5ex]
\nonumber & +  \ipbt{\hat{\bm{\varphi}}_{\text{vis}} -  \ubold_h }{ \left(\nabla_h \wbold_h + \nabla_h \wbold_h^T -\frac{2}{3} \left( \nabla \cdot \wbold_h  \right) \mathbb{I} \right) \nbold}. 
\end{align}
Finally, one may substitute Eqs.~\eqref{moment_ibp_zero}, \eqref{moment_ibp_two}, and \eqref{moment_ibp_six}, into Eq.~\eqref{moment_ibp_neg} in order to obtain Eq.~\eqref{moment_cons_disc}.


{\footnotesize\bibliography{technical-refs}}

\end{document}